\theoremstyle{plain}
\newtheorem{thm}{Theorem}[section]
\theoremstyle{plain}
\newtheorem{lem}[thm]{Lemma}
\newtheorem{prop}[thm]{Proposition}
\theoremstyle{definition}
\newtheorem{defi}{Definition}[section]
\newtheorem{rem}[thm]{Remark}
\newenvironment{Assumptions}
{
\setcounter{enumi}{0}

\begin{enumerate}}
{\end{enumerate} }
\newcommand{\eps}{\ensuremath{\varepsilon}}
\newcommand{\R}{\ensuremath{\mathbb{R}}}
 \newcommand{\Grad}{\mathrm{\nabla}}
\newcommand{\con} {\ast}
 \newcommand{\supp}{\ensuremath{\mathrm{supp}\,}}
\newcommand{\goto}{\ensuremath{\rightarrow}}
\newcommand{\grad}{\ensuremath{\nabla}}
\newcommand{\rd}{\ensuremath{\mathbb{R}^d}}
\newcommand{\E}{\ensuremath{\mathbb{E}}}
\def\N{{I\!\!N}}
\def\guy#1{\textcolor{blue!90!black}{#1} }
\numberwithin{equation}{section}
\title[ Degenerate parabolic PDE with L\'{e}vy noise]{On the Cauchy problem of a degenerate parabolic-hyperbolic PDE with L\'{e}vy noise}
\date{}
\subjclass[2000]{45K05, 46S50, 49L20, 49L25, 91A23, 93E20}
\keywords{Stochastic PDEs, L\'{e}vy noise, degenerate 
parabolic
 equations, entropy solutions, Young measures. 
}
 \thanks{}
\author[Imran H. Biswas]{Imran H. Biswas}
\address[Imran H. Biswas]{\newline
 Centre for Applicable Mathematics,
 Tata 
 \guy{
 Institute
 }
  of Fundamental Research,
  P.O.\ Box 6503, GKVK Post Office,
  Bangalore 560065, India}
\email[]{imran@math.tifrbng.res.in}
\author[Ananta K. Majee]{Ananta K. Majee}
\address[Ananta K. Majee]{\newline
Mathematisches Institut
Universit\"{a}t T\"{u}bingen
Auf der Morgenstelle 10
D-72076 T\"{u}bingen, Germany }
\email[]{majee@na.uni-tuebingen.de}
\author[Guy Vallet]{Guy Vallet}
\address[Guy Vallet]{\newline 
LMAP UMR- CNRS 5142, IPRA BP 1155, 64013 Pau Cedex, France}
\email[]{guy.vallet@univ-pau.fr}
\thanks{The authors are profoundly thankful for the generous support from IFCAM which allowed them to travel between India $\&$ France and made this collaboration possible. The third author would like to also acknowledge the support of ISIFoR. }
\begin{document}
\begin{abstract} 
In this article we deal with stochastic perturbation of degenerate parabolic partial differential equations (PDEs).  The particular emphasise is on analysing the effect of multiplicative L\'{e}vy noise to such problems and establishing wellposedness by developing a suitable weak entropy solution framework. The proof of existence is based on the vanishing viscosity technique. The uniqueness is settled by interpreting Kruzkov's doubling technique in the presence noise. 

\end{abstract}
\maketitle
\tableofcontents

\section{Introduction}
Let $\big(\Omega, P, \mathcal{F}, \{\mathcal{F}_t\}_{t\ge 0} \big)$ be a filtered probability space satisfying 
 the usual hypothesis 
 \guy{
\textit{ i.e.}
 }
  $\{\mathcal{F}_t\}_{t\ge 0}$ is a right-continuous filtration such that $\mathcal{F}_0$ 
 contains all the $P$-null subsets of $(\Omega, \mathcal{F})$. In addition, let $\big(E, \mathcal{E}, m\big)$ be a $\sigma$-finite measure space and $N(\,dt, \,dz )$ be a Poisson random measure on  $\big(E, \mathcal{E}\big)$ with intensity measure $m(\,dz)$ with respect to the same stochastic basis. The existence and construction of such general notion of Poisson random measure with a given intensity measure are detailed in \cite{peszat}. 
   We are interested in the Cauchy problem for a nonlinear degenerate parabolic stochastic PDE of
 the following type
 \begin{align} du(t,x) -\Delta \phi(u(t,x))\,dt - \mbox{div}_x f(u(t,x)) \,dt=\int_{E}
 \eta(x, u(t,x);z)\tilde{N}(dz,dt),\quad (t,x)\in \Pi_T,\label{eq:levy_stochconservation_laws}
\end{align}
   with the initial condition
   \begin{align}
   \label{initial_cond} u(0,x) = u_0(x), \quad \quad x\in \R^d,
   \end{align} 
   where $\Pi_T= [0,T)\times \R^d$ with $T>0$ fixed, $u(t,x)$ is the unknown random scalar valued function,
   $ F:\R\rightarrow \R^d$ is  given flux function, and $ \tilde{N}(dz,dt)= N(dz,dt)-\, m(dz)\,dt $, the compensated Poisson random measure. Furthermore, $(x, u,z)\mapsto \eta(x, u; z)$ is a real valued  function defined on the domain $\R^d\times \R\times E$ and $\phi:\R \rightarrow \R$ 
is a given non-decreasing Lipschitz continuous function. The stochastic integral in the RHS of \eqref{eq:levy_stochconservation_laws} is defined in the L\'{e}vy-It\^{o} sense. 
\begin{rem}
 Since $\phi$ is a real valued non-decreasing, Lipschitz continuous function, the set \\
 $A=\big\{ r\in \R: \phi^\prime(r)=0 \big\}$ is not empty in general and hence the problem is called  degenerate.
 Even more, $A$ is not negligible either and the problem is strongly degenerate in the sense of \cite{carrillo_1999}.
\end{rem}

\begin{rem}
 The analysis of this paper remains valid if the noise on the RHS of \eqref{eq:levy_stochconservation_laws} is of jump-diffusion type.
 In other words, the same analysis holds if we add a $\sigma(x, u) dW_t$ term in the RHS of  \eqref{eq:levy_stochconservation_laws} where $W_t$ is a
 cylindrical Brownian motion. Moreover, we will carry out our analysis under the structural assumption $E = \mathcal{O}\times \R^*$ where $\mathcal{O}$ is a 
 subset of the Euclidean space. The measure $m$ on $E$ is defined as $\lambda \times \mu$ where $\lambda$ is a Radon measure on $\mathcal{O}$ and $\mu$ is a so 
 called L\'{e}vy measure on $\R^*$. In such a case, the noise of the RHS would be called an impulsive white noise with jump position intensity $\lambda$ and
 jump size intensity $\mu$.  We refer to \cite{peszat} for more on L\'{e}vy sheet and related impulsive white noise. 
 
 \end{rem}
The equation \eqref{eq:levy_stochconservation_laws} becomes a multidimensional deterministic degenerate parabolic-hyperbolic 
  equation if $\eta =0$. It is well-documented in the literature that the $solution$ has to be interpreted in the weak sense
  and one needs an entropy formulation to prove wellposedness. We refer to 
\cite{boris_2010,carrillo_1999,chenkarlsen_2005,chenperthame_2003,mostafakarlsen_2004,ValletAdvMathSciAppl} and references therein for more on entropy solution theory
for deterministic degenerate parabolic-hyperbolic equations. 

\subsection{Studies on degenerate parabolic-hyperbolic equations with Brownian noise}
  The study of stochastic degenerate parabolic-hyperbolic equations  has so far been limited to mainly equations with Brownian noise. In particular, hyperbolic conservation laws with Brownian noise are the examples of such problems that have attracted the attention of many. 
The first documented development in this direction is \cite{risebroholden1997}, where the authors established 
existence of path-wise weak solution (possibly non-unique) of one dimensional balance laws \textit{via} splitting method. 
In a separate development, Khanin \textit{et al.}
\cite{Sinai1997} published their celebrated work that described some statistical properties of Burgers equations with noise. 
J.~ U.~ Kim \cite{Kim_2003} extended Kruzkov's entropy formulation and established the wellposedness for one dimensional balance
laws that are driven by additive Brownian noise. Multidimensional case was studied by Vallet and Wittbold \cite{ValletWittbold}, and they
established wellposedness of entropy solution with the theory of Young-measures but in a bounded domain.

This approach is not applicable for multiplicative noise case. This was studied by many
authors (\cite{BaVaWit_2012,Chen-karlsen_2012,Vovelle2010,nualart:2008}).
In \cite{nualart:2008}, Feng and Nualart came up with a way to recover the necessary 
information in the form of  {\it strong entropy} condition from the parabolic regularisation and 
established the uniqueness of strong entropy solution in $L^p$-framework for several space dimensions but  the existence was for 
 one space dimension. We also add here that Feng and Nualart \cite{nualart:2008} uses an entropy formulation
which is strong in time but weak in space, which in our view may give rise to problems where the solutions are not shown to have
continuous sample paths.  We refer to \cite{BisMaj}, where a few technical questions are raised on the strong in time
formulation and remedial measures have been proposed. In \cite{Vovelle2010}, the authors obtain the existence \textit{via} kinetic
formulation and  \cite{Chen-karlsen_2012} uses BV solution framework. In a recent paper, Vallet \textit{et al.} 
\cite{BaVaWit_2012}, established the wellposedness \textit{via} the Young's measure approach. The wellposedness result of the multidimensional degenerate parabolic-hyperbolic stochastic problem 
has been studied by Vovelle, 
Hofmanova
 and Debussche \cite{Vovelledebussche_2014}, and Vallet \textit{et al.} \cite {BaVaWit_2014}.
In \cite{Vovelledebussche_2014}, they adapt the notion of kinetic formulation and develop a wellposedness theory. In \cite {BaVaWit_2014},
the authors revisited \cite{boris_2010,carrillo_1999,chenkarlsen_2005} and established the wellposedness of the entropy solution
\textit{via} Young's measure theory.

\subsection{Relevant studies on problems  with L\'{e}vy noise}
  Over the last decade there has been many contributions on the larger area of stochastic partial differential equations that
  are driven by L\'{e}vy noise. An worthy reference on this subject is \cite{peszat}. However, very little is available on the
  specific problem of degenerate parabolic problems with L\'{e}vy noise such as \eqref{eq:levy_stochconservation_laws}. This article marks an important step in our quest to develop a
  comprehensive theory of stochastic degenerate parabolic equations  that are driven by jump-diffusions. The relevant results in this context are made available recently and they are on conservation laws that are perturbed by L\'{e}vy noise. In  recent articles \cite{BisMajKarl_2014,BisKoleyMaj}, Biswas \textit{et al.} established existence, uniqueness of entropy solution 
 for multidimensional  conservation laws with Poisson noise {\it via} Young measure approach. In \cite{BisKoleyMaj}, the authors developed a continuous 
 dependence theory on nonlinearities within $BV$ solution setting. 
  
Stochastic degenerate parabolic-hyperbolic equations are one of the most important classes of nonlinear stochastic PDEs. Nonlinearity and degeneracy are two main features of
these equations and yield several striking phenomena. Therefore, it requires new mathematical ideas, approaches, and theories.
It is well-known that due to presence of nonlinear flux term,
solutions to \eqref{eq:levy_stochconservation_laws} are not smooth even for smooth initial data $u_0(x)$. Therefore the solutions 
must be interpreted in the weak sense. Before introducing the concept of weak solutions, we first recall the notion of predictable 
 $\sigma$-field. By a predictable $\sigma$-field on $[0,T]\times \Omega$, denoted
by $\mathcal{P}_T$, we mean that the $\sigma$-field generated by the sets of the form: $\{0\}\times A$ and $(s,t]\times B$ for any $A \in \mathcal{F}_0; B
\in \mathcal{F}_s,\,\, 0<s,t \le T$. 
The notion of stochastic weak solution is defined as follows.
\begin{defi}[Stochastic weak solution]\label{defi:weaksolution}
 An $ L^2(\R^d )$-valued $\{\mathcal{F}_t: t\geq 0 \}$-predictable stochastic process $u(t)= u(t,x)$ is said to be a weak solution
  to our problem \eqref{eq:levy_stochconservation_laws} provided 
  \begin{itemize}
   \item [1)]  $u \in L^2(\Omega \times \Pi_T)$ and $\phi(u)\in L^2((0,T)\times \Omega; H^1(\R^d))$.
  \item[(2)] $\frac{\partial}{\partial t} [u-\int_0^t \int_{E} \eta(x,u(s,\cdot);z) \tilde{N}(dz,ds)]
  \in L^2((0,T)\times \Omega; H^{-1}(\R^d))$
  in the sense of distribution.
  \item[(3)] For almost every $t\in [0,T]$ and $ P-$ a.s, the following variational formulation holds: 
  \begin{align}
   0=&\Big\langle \frac{\partial}{\partial t} [u-\int_0^t \int_{E} \eta(x, u(s,\cdot);z) \tilde{N}(dz,ds)],v \Big\rangle_{H^{-1}(\R^d),H^1(\R^d)} \notag \\
   & \qquad \quad + \int_{\R^d} \Big\{ \grad \phi(u(t,x)) + f(u(t,x))\Big\}.\grad v\,dx,
  \end{align}
   for any $v \in H^1(\R^d)$.
  \end{itemize}
\end{defi}
However, it is well-known that weak solutions may be discontinuous and they are not uniquely determined by their initial data.
Consequently, an admissibility  criterion for so called {\em entropy solution}~(see Section \ref{technical} for the definition of 
entropy solution) must be imposed to single out the physically correct solution.
  
\subsection{Goal of the study and outline of the paper} The case of a strongly degenerate stochastic problem driven by Brownian 
  noise is studied by {Bauzet \textit{et al.} \cite{BaVaWit_2014}. 
  In this article, drawing primary motivation from \cite{BaVaWit_2014,BisMajKarl_2014,carrillo_1999}, we propose to establish the 
  wellposedness of the entropy solution to degenerate Cauchy problem \eqref{eq:levy_stochconservation_laws} by using vanishing 
  viscosity method along with few \textit{a priori} bounds. 
  \vspace{.1cm}
  
  The rest of the paper is organized as follows. We state the assumptions, details of the technical framework and state the main 
  results in Section \ref{technical}. Section \ref{sec:existence-weak-solu-viscous} is devoted to prove the existence of weak solution for 
  viscous problem \textit{via} implicit time discretization scheme and to derive some \textit{a priori} estimates for viscous solution. In section \ref{sec:existence-entropy},
  we first establish uniqueness of the limit of viscous solutions as viscous parameter goes to zero \textit{via} Young measure theory and then we 
  establish existence of entropy solution. The uniqueness of the entropy solution is presented 
  in the final section.

\section{Technical framework and statements of the main results}\label{technical}
Here and in the sequel, 
we denote by $N^2_\omega(0,T,L^2(\R^d))$ the space of predictable $L^2(\R^d)$-valued processes $u$ such that $\E\Big[\int_{\Pi_T}|u|^2\,dt\,dx\Big]<+\infty$. Moreover, we use
the letter $C$ to denote various generic constants. There are situations where constants may change 
from line to line, but the notation is kept unchanged so long as it does not impact the primary implication. We denote $c_\phi$ and $c_f$ the Lipschitz
constants of $\phi$, and $f$ respectively. Also, we use $\big\langle ,\big\rangle$ to denote the pairing between $H^1(\R^d)$ and 
$H^{-1}(\R^d)$.

\subsection{Entropy inequalities}
 We begin this subsection with a formal derivation of entropy inequalities \`{a} la Kruzkov. Remember that we need to replace 
 the traditional chain rule for deterministic calculus by It\^{o}-L\'{e}vy chain rule.
 \begin{defi}[Entropy flux triple]
A triplet $(\beta,\zeta,\nu) $ is called an entropy flux triple if $\beta \in C^2(\R)$, Lipschitz and $\beta \ge0$,
$\zeta = (\zeta_1,\zeta_2,....\zeta_d):\R \mapsto \R^d$ is a vector valued function, and $ \nu :\R \mapsto \R $ is a scalar valued
function such that 
\[\zeta'(r) = \beta'(r)f'(r) \quad \text{and}\quad \nu^\prime(r)= \beta'(r)\phi'(r).\]
 An entropy flux triple $(\beta,\zeta,\nu)$ is called convex if $ \beta^{\prime\prime}(s) \ge 0$.  
\end{defi}
For a small positive number $\eps>0$, assume that the parabolic perturbation
\begin{align}
  du(t,x) -\Delta \phi(u(t,x))\,dt =& \mbox{div}_x f(u(t,x)) \,dt +\int_{E} \eta(x, u(t,x);z)\tilde{N}(dz,dt) \notag \\
 &  \hspace{3cm}+ \eps \Delta u(t,x)\,dt, \quad  (t,x)\in \Pi_T \label{eq:levy_laws-viscous}
\end{align}
of \eqref{eq:levy_stochconservation_laws} has a unique weak solution  $u_\eps(t,x)$.  Note that this weak solution $u_\eps\in  L^2((0,T)\times \Omega; H^1(\R^d))$.  Moreover, for the time being, we assume that it satisfies the initial condition in the sense of \eqref{weak-initial-consition}.
This enables one to derive a weak version of It\^{o} -L\'{e}vy formula for the 
solutions to 
  \eqref{eq:levy_stochconservation_laws}, as detailed in the Theorem \ref{thm:weak-its} in the Appendix. 
\vspace{.2cm}

 Let $(\beta,\zeta,\nu)$ be an entropy flux triple. Given a nonnegative test function $\psi\in C_{c}^{1,2}([0,\infty)\times
 \R^d)$, we apply generalised version of the It\^{o}-L\'{e}vy formula to have, for almost every $T >0$, 
\begin{align}
  &\int_{\R^d} \beta(u_\eps(T,x))\psi(T,x)\,dx -  \int_{\R^d} \beta(u_\eps(0,x))\psi(0,x)\,dx  \notag \\
  = & \int_{\Pi_T} \beta(u_\eps(t,x)) \partial_t\psi(t,x) \,dx\,dt -  \int_{\Pi_T} \grad \psi(t,x)\cdot \zeta(u_\eps(t,x))\,dx\,dt 
   \notag \\
 + & \int_{\Pi_T} \int_{E}  \int_0^1 \eta(x,u_\eps(t,x);z)\beta^\prime (u_\eps(t,x)
 + \theta\,\eta(x,u_\eps(t,x);z))\psi(t,x)\,d\theta\,\tilde{N}(dz,dt)\,dx \notag \\
+&\int_{\Pi_T}\int_{E}  \int_0^1  (1-\theta)\eta^2(x,u_\eps(t,x);z)\beta^{\prime\prime} (u_\eps(t,x) + \theta\,\eta(x,u_\eps(t,x);z))
\psi(t,x)\,d\theta\,m(dz)\,dx\,dt \notag \\
 -&   \int_{\Pi_T} \Big(\eps \nabla_x \psi(t,x).\nabla_x\beta(u_\eps(t,x)) +\eps \beta''(u_\eps(t,x))|\nabla_x u_\eps(t,x)|^2\psi(t,x)\Big)\,dx\,dt \notag \\
-&  \int_{\Pi_T} \phi^\prime(u_\eps(t,x)) \beta^{\prime\prime}(u_\eps(t,x)) \big|\grad u_\eps(t,x)\big|^2\psi(t,x)\,dx\,dt
+  \int_{\Pi_T} \nu(u_\eps(t,x))\Delta \psi(t,x)\,dx\,dt
\end{align}
 Let $G$ be the associated Kirchoff's function of $\phi$, given by $G(x)= \int_0^x \sqrt{\phi^\prime(r)}\,dr$.
 A simple calculation shows that $|\Grad G(u_\eps(t,x))|^2=\phi^\prime(u_\eps(t,x)) |\grad u_\eps(t,x)|^2 $. Since $\beta$ and $\psi$
 are nonnegative functions, we obtain
 \begin{align}
  0\le  & \int_{\R^d} \beta(u_\eps(0,x))\psi(0,x)\,dx  +  \int_{\Pi_T} \Big\{ \beta(u_\eps(t,x)) \partial_t\psi(t,x)
    -  \grad \psi(t,x)\cdot \zeta(u_\eps(t,x)) \Big\}dx\,dt   \notag \\
-&  \int_{\Pi_T} \beta^{\prime\prime}(u_\eps(t,x)) |\Grad G(u_\eps(t,x))|^2\psi(t,x)\,dx\,dt
+  \int_{\Pi_T} \nu(u_\eps(t,x))\Delta \psi(t,x)\,dx\,dt + \mathcal{O}(\eps) \notag \\
 + & \int_{\Pi_T}\int_{E}  \int_0^1 \eta(x,u_\eps(t,x);z)\beta^\prime \big(u_\eps(t,x) + \theta\,\eta(x,u_\eps(t,x);z)\big)\psi(t,x)\,d\theta\,\tilde{N}(dz,dt)\,dx \notag \\
+&\int_{\Pi_T} \int_{E} \int_0^1  (1-\theta)\eta^2(x,u_\eps(t,x);z)\beta^{\prime\prime} \big(u_\eps(t,x) + \theta\,\eta(x,u_\eps(t,x);z)\big)
\psi(t,x)\,d\theta\,m(dz)\,dx\,dt \notag
\end{align}

Clearly, the above inequality is stable under the limit $\eps \goto 0 $, if the family $\{u_\eps \}_{\eps >0}$ has $ L_{\mathrm{loc}}^p $-type stability.
Just as the deterministic equations, the above inequality provides us with the entropy condition.
We now formally define the entropy solution. 

\begin{defi} [Stochastic entropy solution]\label{defi:stochentropsol}
A stochastic process $u \in N^2_\omega(0,T,L^2(\R^d))$ is called a stochastic entropy solution of \eqref{eq:levy_stochconservation_laws} if
\begin{itemize}
 \item[(1)] for each $ T>0$, $G(u) \in L^2((0,T)\times \Omega;H^1(\R^d))$ and 
 $\underset{0\leq t\leq T}\sup  \E\big[||u(t)||_{2}^{2}\big] < \infty$. 
\item[(2)] Given a nonnegative test function  $\psi\in C_{c}^{1,2}([0,\infty )\times\R^d) $ and a convex entropy flux triple
$(\beta,\zeta,\nu)$, the following inequality holds:
\begin{align}
&  \int_{\Pi_T} \Big\{ \beta(u(t,x)) \partial_t\psi(t,x)
+  \nu(u(t,x))\Delta \psi(t,x) -  \grad \psi(t,x)\cdot \zeta(u(t,x)) \Big\}dx\,dt \notag \\
 & + \int_{\Pi_T} \int_{E} \int_0^1 \eta(x,u(t,x);z)\beta^\prime (u(t,x) + \theta\,\eta(x,u(t,x);z))\psi(t,x)\,d\theta\,\tilde{N}(dz,dt)\,dx \notag \\
&\quad  +\int_{\Pi_T} \int_{E}  \int_0^1  (1-\theta)\eta^2(x,u(t,x);z)\beta^{\prime\prime} (u(t,x) + \theta\,\eta(x,u(t,x);z))
\psi(t,x)\,d\theta\,m(dz)\,dx\,dt \notag \\
& \qquad \quad \ge  \int_{\Pi_T} \beta^{\prime\prime}(u(t,x)) |\Grad G(u(t,x))|^2\psi(t,x)\,dx\,dt
- \int_{\R^d} \beta(u_0(x))\psi(0,x)\,dx, \quad P-\text{a.s}.\label{eq:entropy-def}
\end{align}
\end{itemize}
\end{defi} 

\begin{rem} We point out that, by a classical separability argument, it is possible to choose a subset of $\Omega$ of $P$-full measure such that \eqref{eq:entropy-def} holds on that subset for every admissible entropy triplet and test function. 
\end{rem}

The primary aim of this paper is to establish the existence and uniqueness of entropy solutions for the  Cauchy problem 
\eqref{eq:levy_stochconservation_laws} in accordance with Definition \ref{defi:stochentropsol}, and we do so under
the following assumptions:
 \begin{Assumptions}
  \item \label{A1}  $ \phi:\R\rightarrow \R$ is a non-decreasing  Lipschitz continuous function with $\phi(0)=0$.
 Moreover, if $\eta$ is not a constant function with respect to the space variable $x$, $t\longmapsto \sqrt{\phi^\prime(t)}$ has a modulus of continuity $\omega_\phi$
 such that $\frac{\omega_\phi(r)}{r^{\frac{2}{3}}} \longrightarrow 0$ as $r \goto 0$. 
\item \label{A2}  $ f=(f_1,f_2,\cdots, f_d):\R\rightarrow \R^d$ is  a Lipschitz continuous function with $f_k(0)=0$ for 
all $1\le k\le d$.
 \item \label{A4} The space $E$ is of the form $\mathcal{O}\times \R^*$ and the Borel measure $m$ on $E$ has the form $\lambda\times \mu $ where 
 $\lambda$ is a Radon measure on $\mathcal{O}$ and $\mu$ is a so-called one dimensional L\'{e}vy measure.

\item \label{A3} There exist positive constants  $K > 0$, $\lambda^* \in (0,1)$ and $h_1(z) \in L^2(E,m)$ with $0\le h_1(z)\le 1$  such that 
 \begin{align*} \big| \eta(x,u;z)-\eta(y,v;z)\big|  \leq  (\lambda^* |u-v|+ K |x-y|) h_1(z) ~\text{for all}~ x,y \in \R^d;~ u,v \in \R;~~z\in E.
 \end{align*}
\item  \label{A5} There exists a nonnegative function $ g\in L^\infty(\R^d)\cap L^2(\R^d)$ and $h_2(z)\in L^2(E,m)$ such that for all $(x,u,z)\in \R^d \times \R\times E,$
\begin{align*}
|\eta(x,u;z)| \le g(x)(1+|u|)h_2(z).
\end{align*}    
\end{Assumptions}

 The above definition does not say anything explicitly about the entropy solution satisfying the initial condition. However, 
 the initial condition is satisfied in a certain weak sense. Here we state the lemma whose proof follows a simple line argument as in the 
 Lemma $2.3$ of \cite{BisMajKarl_2014}.
\begin{lem}\label{lem:initial-cond}
Any  entropy solution $u(t,\cdot)$ of 
\eqref{eq:levy_stochconservation_laws} satisfies the initial condition in the following sense: for every non negative test
function $\psi\in C_c^2(\R^d)$ such that $\supp(\psi) = K$
\begin{align}
  \lim_{h\rightarrow 0}\E \Big[\frac 1h \int_0^h\int_K  \big|u(t,x) -u_0(x)\big|\psi(x)\,dx\, dt \Big]= 0.  \label{eq:intial_cond_weak}
\end{align}
\end{lem}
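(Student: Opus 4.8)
The plan is to test the entropy inequality \eqref{eq:entropy-def} with Kruzkov-type entropies centred at constants against a test function that concentrates at the initial time, take expectations to annihilate the martingale term, and then upgrade the resulting comparison-with-constants to a comparison with $u_0$ by a localisation argument.

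First I would fix a nonnegative $\varphi\in C_c^2(\R^d)$ with $\supp\varphi=K$ and, for $h>0$, introduce a time cut-off $\alpha_h$ that is Lipschitz and nonincreasing with $\alpha_h(0)=1$, $\alpha_h\equiv 0$ on $[h,\infty)$ and $\alpha_h'=-\tfrac1h$ on $(0,h)$ (the entropy inequality extends to test functions Lipschitz in $t$ and $C^2$ in $x$ by mollification in time). For a fixed constant $k\in\R$ I would use the entropy flux triple generated by a smooth convex approximation $\beta_\delta$ of $r\mapsto|r-k|$, insert $(t,x)\mapsto \varphi(x)\alpha_h(t)$ into \eqref{eq:entropy-def}, and take expectation. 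The compensated Poisson integral has zero mean, since its integrand is predictable and integrable by \ref{A5} together with the a priori bound $\sup_t\E[\|u(t)\|_2^2]<\infty$; the entropy-dissipation term $\E\int\beta_\delta''(u)|\nabla G(u)|^2\psi$ is nonnegative and may be discarded on the favourable side; and the remaining flux, parabolic and jump-correction terms are all supported in time on $[0,h]$, hence $O(h)$ uniformly in $\delta$.

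Letting $\delta\to0$, dominated convergence turns $\beta_\delta$ into $|\cdot-k|$ and yields, for every constant $k$ and every nonnegative $\varphi\in C_c^2(\R^d)$,
$$\limsup_{h\to0}\ \frac1h\,\E\int_0^h\!\!\int_{\R^d}|u-k|\,\varphi\,dx\,dt\ \le\ \int_{\R^d}|u_0-k|\,\varphi\,dx.$$
To pass from constants to $u_0$, I would take a partition of unity $\{\varphi_i\}_{i=1}^n\subset C_c^2$ with $\sum_i\varphi_i=1$ on $K$ and constants $k_i$, write $\psi=\sum_i\varphi_i\psi$, and use the triangle inequality $|u-u_0|\le|u-k_i|+|u_0-k_i|$ on each patch. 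Since $u_0$ and $k_i$ are deterministic, the displayed bound applied to $\varphi_i\psi$ gives $\limsup_h\frac1h\E\int_0^h\int|u-u_0|\varphi_i\psi\le 2\int|u_0-k_i|\varphi_i\psi$; summing over $i$ produces $\limsup_h\frac1h\E\int_0^h\int|u-u_0|\psi\le 2\int\big(\sum_i|u_0-k_i|\varphi_i\big)\psi$. Refining the partition and choosing each $k_i$ as a local average of $u_0$, the right-hand side tends to $0$ by the $L^1_{\mathrm{loc}}$ approximation of $u_0\in L^2(\R^d)$ by its averages, and since the left-hand quantity is nonnegative the limit is exactly $0$.

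The delicate point I expect is the uniform-in-$h$ (and in $\delta$) control of the stochastic correction term $\int\int_E\int_0^1(1-\theta)\eta^2\beta_\delta''(u+\theta\eta)\psi\,d\theta\,m\,dx\,dt$: one must check that it remains bounded as $\delta\to0$ and is $O(h)$, which is precisely where the integrability furnished by \ref{A5} (namely $\eta\le g(1+|u|)h_2$ with $g\in L^\infty\cap L^2$ and $h_2\in L^2(E,m)$) combined with the energy bound is essential. The localisation step, although it is the conceptual heart of the upgrade from constants to $u_0$, is soft once the constant-level estimate above is established.
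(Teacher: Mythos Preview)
Your strategy is the standard one and matches what the paper relies on (the paper does not prove the lemma but refers to Lemma~2.3 of \cite{BisMajKarl_2014}). Two points need correction.

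First, the jump-correction term is not $O(h)$ uniformly in $\delta$: since $\|\beta_\delta''\|_\infty\sim \delta^{-1}$ and \ref{A5} only yields $\eta\in L^2(E,m)$ in the $z$-variable (not $L^1$), the best available bound is $O(h/\delta)$. The fix is simply to reverse your order of limits: for fixed $\delta$ all remainder terms are $O_\delta(h)$, so send $h\to 0$ first to obtain $\limsup_h \tfrac1h\,\E\int_0^h\!\int\beta_\delta(u-k)\varphi\le \E\int\beta_\delta(u_0-k)\varphi$, and only then let $\delta\to 0$ using $\big||r|-\beta_\delta(r)\big|\le M_1\delta$.

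Second, and more seriously, $u_0$ is an $\mathcal F_0$-measurable \emph{random} variable here, not deterministic. Your partition-of-unity step with deterministic $k_i$ leaves $2\,\E\int|u_0-k_i|\varphi_i\psi$ on the right-hand side, and no deterministic $k_i$ makes this small: shrinking the patches controls only the $x$-oscillation of $u_0$, not its $\omega$-oscillation. You must either allow $k_i$ to be $\mathcal F_0$-measurable (e.g.\ the $\omega$-wise patch average of $u_0$), observing that the martingale term still has zero mean because its integrand remains predictable; or, closer to \cite{BisMajKarl_2014}, substitute $k=u_0(y)$ \emph{before} taking expectation (legitimate $P$-a.s.\ for all $k$ by the separability remark following Definition~\ref{defi:stochentropsol}), multiply by $\varrho_\delta(x-y)$, integrate over $y$, and send $\delta\to 0$ via the $L^1$-continuity of translations applied to $u_0(\cdot,\omega)$.
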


Next, we describe a special class of entropy functions that plays an important role in later analysis. 
  Let $\beta:\R \rightarrow \R$ be a $C^\infty$ and  Lipschitz function satisfying 
 \begin{align*}
      \beta(0) = 0,\quad \beta(-r)= \beta(r),\quad \beta^{\prime\prime} \ge 0,
 \end{align*} and 
\begin{align*}
\beta^\prime(r)=\begin{cases} -1\quad \text{when} ~ r\le -1,\\
                               \in [-1,1] \quad\text{when}~ |r|<1,\\
                               +1 \quad \text{when} ~ r\ge 1.
                 \end{cases}
\end{align*} For any $\vartheta > 0$, define  $\beta_\vartheta:\R \rightarrow \R$ by 
\begin{align*}
         \beta_\vartheta(r) = \vartheta \beta(\frac{r}{\vartheta}).
\end{align*} Then
\begin{align}\label{eq:approx to abosx}
 |r|-M_1\vartheta \le \beta_\vartheta(r) \le |r|\quad \text{and} \quad |\beta_\vartheta^{\prime\prime}(r)| \le \frac{M_2}{\vartheta} {\bf 1}_{|r|\le \vartheta},
\end{align} where
\begin{align*}
 M_1 = \sup_{|r|\le 1}\big | |r|-\beta(r)\big |, \quad M_2 = \sup_{|r|\le 1}|\beta^{\prime\prime} (r)|.
\end{align*}
By simply dropping $\vartheta$, for $\beta= \beta_\vartheta$ ~ we define  

\begin{equation*}
\begin{cases}
 \phi^\beta(a,b)=\int_{b}^a \beta^\prime(\sigma-b)\phi^\prime(\sigma)\,d(\sigma),\quad 
 F_k^\beta(a,b)=\int_{b}^a \beta^\prime(\sigma-b)f_k^\prime(\sigma)\,d(\sigma),\\
 F_k(a,b)= \text{sign}(a-b)(f_k(a)-f_k(b)),\quad  F(a,b)= \big(F_1(a,b),F_2(a,b),....,F_d(a,b)\big).
 \end{cases}
\end{equation*}
\vspace{.1cm}

We conclude this section by stating  the main results of this paper. 
\begin{thm}(Existence)\label{thm:existenc}
 Let the assumptions \ref{A1}-\ref{A5} be true, and that $ L^2(\R^d)$-valued $\mathcal{F}_0$-measurable random variable 
 $ u_0$ satisfies $\E\big[||u_0||^2_2\big] < \infty $.
 Then, there exists an entropy solution of \eqref{eq:levy_stochconservation_laws} in the sense of Definition
 \ref{defi:stochentropsol}.
\end{thm}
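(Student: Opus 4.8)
The plan is to realise the entropy solution as a vanishing-viscosity limit of the parabolic regularisation \eqref{eq:levy_laws-viscous}. First, for each fixed $\eps>0$ the added term $\eps\Delta u_\eps$ renders the principal part uniformly parabolic, so I would establish existence of a weak solution $u_\eps\in L^2((0,T)\times\Omega;H^1(\R^d))$ by an implicit time-discretisation (Rothe) scheme, solving each elliptic step by monotonicity/Browder--Minty arguments and passing to the limit in the time step. The crucial output of this stage is a family of \emph{a priori} estimates uniform in $\eps$: an energy bound $\sup_{0\le t\le T}\E\big[\|u_\eps(t)\|_2^2\big]\le C$ and $\E\big[\int_{\Pi_T}|\Grad G(u_\eps)|^2\,dx\,dt\big]\le C$ obtained from an It\^o--L\'evy energy identity (where the growth bound \ref{A5} on $\eta$ controls the quadratic jump correction), together with the degenerate-diffusion control $\E\big[\eps\int_{\Pi_T}|\grad u_\eps|^2\big]\le C$, which makes the viscous dissipation the $\mathcal{O}(\eps)$ term already appearing in the entropy inequality derived above.

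Armed with these bounds, the second stage is to pass to the limit $\eps\goto0$. Since the equation is nonlinear and the uniform estimates give only weak compactness of $\{u_\eps\}$ in $L^2(\Omega\times\Pi_T)$ --- the degeneracy of $\phi$ precludes strong compactness --- I would invoke the Young-measure framework of \cite{BaVaWit_2014,BisMajKarl_2014}. Up to a subsequence $\{u_\eps\}$ generates a Young measure, equivalently an \emph{entropy process} $u(\omega,t,x,\alpha)$ with $\alpha\in(0,1)$, while $G(u_\eps)$ converges weakly in $L^2((0,T)\times\Omega;H^1(\R^d))$. The composed nonlinearities $\beta(u_\eps),\zeta(u_\eps),\nu(u_\eps)$ and $\eta(\cdot,u_\eps;\cdot)$ pass to the limit as averages over $\alpha$; the dissipation $\int\beta''(u_\eps)|\Grad G(u_\eps)|^2\psi$ is retained by weak lower semicontinuity of the convex integrand, landing on the favourable side of the limiting inequality; and the stochastic integral against $\tilde N$ is handled through the It\^o isometry for the martingale part and dominated convergence for the compensator, again using \ref{A5}. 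This produces an entropy inequality satisfied by the entropy process.

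The decisive and most delicate stage is to show that the Young measure reduces to a Dirac mass, i.e.\ that $u(\omega,t,x,\alpha)$ is independent of $\alpha$; this simultaneously yields strong convergence of $u_\eps$ and identifies the limit as a bona fide entropy solution. I would carry this out by a Kruzkov-type doubling of variables performed within the stochastic framework, comparing two representations of the entropy process indexed by $\alpha,\gamma\in(0,1)$ through the Kruzkov entropies built from the $\beta_\vartheta$ introduced above, and sending the doubling and regularisation parameters to zero. The It\^o--L\'evy calculus generates quadratic correction terms from the jumps whose contribution must be absorbed into a Gr\"onwall-type estimate for $\E\int|u(\cdot,\alpha)-u(\cdot,\gamma)|$; the strict contraction hypothesis $\lambda^*<1$ in \ref{A3} is precisely what makes this absorption possible. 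The hardest technical point I anticipate is controlling the cross-term between the two degenerate parabolic dissipations in the doubled inequality: handling it requires the refined modulus-of-continuity assumption on $\sqrt{\phi'}$ in \ref{A1}, namely $\omega_\phi(r)/r^{2/3}\goto0$, which is tuned so that the parabolic defect terms match across the doubling and do not obstruct the reduction. Once $\alpha$-independence is established, the entropy process is a genuine function $u(\omega,t,x)$ satisfying \eqref{eq:entropy-def} together with the integrability requirements of Definition \ref{defi:stochentropsol}, which completes the existence proof.
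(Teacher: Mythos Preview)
Your overall architecture---viscous regularisation via an implicit time scheme, uniform $L^2$/$H^1$-type bounds, Young-measure compactness, Kruzkov doubling to collapse the Young measure, and passage to the limit in the entropy inequality---matches the paper. The one substantive discrepancy is the \emph{order} of the last two steps, and it is not innocuous.

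You propose to first pass to the Young-measure limit (obtaining an entropy inequality for the entropy process $u(\cdot,\alpha)$, with the parabolic dissipation retained only through weak lower semicontinuity) and \emph{then} to double variables between two $\alpha$-slices of this process. The paper does the opposite: it doubles variables at the \emph{viscous} level, writing the It\^o--L\'evy formula for $u_\theta(t,x)$ against the entropy centred at $k$, multiplying by $J_l(u_\eps\ast\tau_\kappa(s,y)-k)$, symmetrising, and only afterwards sending $\delta_0,\kappa,l,\eps,\theta\to0$. Two reasons make the paper's order essentially forced here. First, the parabolic dissipation terms are not merely kept on the favourable side via lower semicontinuity; they are used \emph{as equalities} and combined with the second-order flux terms $I_6,J_5$ into the quantity $\mathcal H$ (Lemma~\ref{lem:technical} and Lemma~\ref{stochastic_lemma_6}), where the cancellation producing $|\phi(u)-\phi(\tilde u)|\Delta\psi$ relies on the explicit identity $2I_\beta+\phi^\beta(a,b)+\phi^\beta(b,a)=\tfrac12\iint\beta''(\mu-\sigma)(\sqrt{\phi'(\mu)}-\sqrt{\phi'(\sigma)})^2$. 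If you only have an inequality for the dissipation, this cancellation fails. Second, the final passage to the limit in the single entropy inequality (Subsection~4.2) requires the product $\beta''(u_\eps)|\nabla G(u_\eps)|^2$ to converge; the paper gets this by first establishing \emph{pointwise} convergence of $u_\eps$ (from $\alpha$-independence), so that $\sqrt{\beta''(u_\eps)\psi}\to\sqrt{\beta''(u)\psi}$ strongly while $\nabla G(u_\eps)\rightharpoonup\nabla G(u)$, and then invokes Fatou for the weak limit of the product. Your ordering would have to identify this product limit before strong convergence is available, which is the step that does not go through.

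A minor point: the condition $\lambda^*<1$ in \ref{A3} is not used to close a Gr\"onwall loop for the jump corrections. In the paper the combined stochastic terms $(I_3+J_3)+(I_4+J_4)$ are shown to vanish outright as $\vartheta,\delta\to0$ with $\vartheta^{-1}\delta^2\to0$ (Lemma~\ref{stochastic_lemma_itoint_additional}); the Gr\"onwall step appears only afterwards, against the spatial weight $\phi_n$, and involves the Lipschitz constants of $f$ and $\phi$.
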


\begin{thm}(Uniqueness)\label{thm:uniqueness}
 Let the assumptions \ref{A1}-\ref{A5} be true, and that $ L^2(\R^d)$-valued  $\mathcal{F}_0$-measurable random variable
 $ u_0$ satisfies $\E\big[||u_0||^2_2\big] < \infty $.
 Then, the entropy solution of \eqref{eq:levy_stochconservation_laws} is unique.
\end{thm}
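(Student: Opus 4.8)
The plan is to establish the stronger $L^1$-contraction estimate
\[\E\Big[\int_{\R^d}|u(t,x)-v(t,x)|\,dx\Big]\le \E\Big[\int_{\R^d}|u_0(x)-v_0(x)|\,dx\Big]\]
for two entropy solutions $u,v$ with data $u_0,v_0$, whence Theorem \ref{thm:uniqueness} follows on taking $u_0=v_0$. The mechanism is Kruzkov's doubling of variables adapted to the stochastic, degenerate setting. I would write the entropy inequality \eqref{eq:entropy-def} for $u=u(t,x)$ using the convex entropy $\beta_\vartheta(\cdot-\kappa)$ of \eqref{eq:approx to abosx} and, independently, for $v=v(s,y)$, then add the two, integrate against a test function of product form $\psi(t,x,s,y)=\varrho_{\delta_0}(x-y)\,\varrho_{\delta}(t-s)\,\chi\big(\tfrac{t+s}{2},\tfrac{x+y}{2}\big)$ with $\varrho$ standard mollifiers and $\chi$ a nonnegative cutoff, substitute the other solution's value for the constant $\kappa$, and take expectations.

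The principal new difficulty compared with the deterministic theory is the stochastic forcing: the integrand $\beta_\vartheta'(u(t,x)-v(s,y))$ in $u$'s martingale term is not predictable in $t$ once $\kappa$ is replaced by the future-dependent value $v(s,y)$, so the stochastic integrals cannot be treated one solution at a time. I would circumvent this by performing the doubling at the level of the viscous approximations $u_\eps,v_\eps$, which are genuine It\^o--L\'evy processes driven by the \emph{same} compensated measure $\tilde{N}$, and applying the It\^o--L\'evy formula directly to $\beta_\vartheta(u_\eps(t,x)-v_\eps(s,y))$. Since the two solutions jump simultaneously, the jump of the difference is $\eta_u-\eta_v$ (writing $\eta_u,\eta_v$ for $\eta(x,u_\eps;z),\eta(y,v_\eps;z)$), and the compensator term is
\[\int_E\Big[\beta_\vartheta(w+\eta_u-\eta_v)-\beta_\vartheta(w)-\beta_\vartheta'(w)(\eta_u-\eta_v)\Big]\,m(dz),\quad w=u_\eps-v_\eps,\]
while the genuinely stochastic part has zero expectation. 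Using \eqref{eq:approx to abosx} together with the Lipschitz bound \ref{A3} and the growth bound \ref{A5}, this compensator term is $\mathcal{O}(\vartheta)$ after the spatial parameter $\delta_0\goto 0$ and hence disappears as $\vartheta\goto 0$; the constants $\lambda^*$ and $h_1,h_2$ enter precisely here (and in the accompanying \textit{a priori} bounds) to keep the estimate uniform.

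The hard part is the degenerate second-order term. After the doubling, the dissipation contributions $\beta_\vartheta''(u)|\Grad G(u)|^2$ and $\beta_\vartheta''(v)|\Grad G(v)|^2$ must be combined with the cross terms produced by $\nu(u)\Delta\psi+\nu(v)\Delta\psi$, following Carrillo's argument: one rewrites everything through the Kirchhoff function $G$, exploits $|\Grad G|^2=\phi'|\grad u|^2$, and shows that the net ``parabolic defect'' has a favourable sign in the limit. The obstruction specific to our problem is that $\eta$ depends on the spatial variable $x$, which generates, in the second-order analysis, an error term coupling $\Grad G$ to the spatial increments of $\eta$; controlling it requires the modulus-of-continuity hypothesis \ref{A1}, namely $\omega_\phi(r)/r^{\frac{2}{3}}\goto 0$, so that the $x$-regularity of $\sqrt{\phi'}$ outpaces the rate at which the mollification scale $\delta_0$ is sent to zero. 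I expect this cancellation to be the most delicate step, and the place where \ref{A1} is indispensable.

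Finally I would pass to the limit in the order $\delta_0\goto 0$ (collapsing $y\to x$, which turns the flux contributions into the Kruzkov entropy flux $F(u,v)$ and activates the Carrillo cancellation), then $\delta\goto 0$ (collapsing $s\to t$), and lastly $\vartheta\goto 0$, using \eqref{eq:approx to abosx} to replace $\beta_\vartheta$ by $|\cdot|$ and the initial-trace Lemma \ref{lem:initial-cond} to recover the data term. What survives is a Gronwall-type inequality
\[\E\Big[\int_{\R^d}|u(t)-v(t)|\chi\,dx\Big]\le \E\Big[\int_{\R^d}|u_0-v_0|\chi\,dx\Big]+C\int_0^t\E\Big[\int_{\R^d}|u(\tau)-v(\tau)|\,dx\Big]d\tau,\]
with $C$ depending on $c_f$ and on $\lambda^*,\|h_1\|_{L^2(E,m)}$. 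Removing the cutoff $\chi$ by means of the global bound $\sup_t\E[\|u(t)\|_2^2]<\infty$ and applying Gronwall's lemma then yields the claimed $L^1$-contraction, and hence $u=v$ whenever $u_0=v_0$.
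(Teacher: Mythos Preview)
There is a genuine circularity in your plan. You propose to bypass the predictability obstruction by ``performing the doubling at the level of the viscous approximations $u_\eps,v_\eps$''. But if $u$ and $v$ are \emph{arbitrary} entropy solutions with the same datum $u_0=v_0$, the viscous problem with that datum has a \emph{single} solution $u_\eps$, and all you recover by your argument is the statement (already proved in Section~\ref{sec:existence-entropy}) that the sequence $\{u_\eps\}$ has a unique limit. This says nothing about whether the given entropy solutions $u$ and $v$ coincide with that limit; indeed, knowing that every entropy solution is a vanishing-viscosity limit is equivalent to uniqueness itself. Your scheme therefore proves only the uniqueness of Young-measure limits of viscous approximations, not uniqueness within the entropy class.

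The paper's route avoids this: it compares an \emph{arbitrary} entropy solution $v(t,x)$ against the viscous solution $u_\eps(s,y)$ (mollified in $y$), using an asymmetric doubling in which the entropy inequality \eqref{eq:entropy-def} is written for $v$ while the strong It\^{o}--L\'{e}vy formula is applied to $u_\eps\ast\tau_\kappa$. Passing $\eps\to 0$ then yields $v=u$, where $u$ is the specific vanishing-viscosity solution. A second technical point: even when both objects are viscous, the doubled variables carry \emph{different} times $t\neq s$, so the processes do \emph{not} jump simultaneously and you cannot apply the It\^{o}--L\'{e}vy formula ``directly to $\beta_\vartheta(u_\eps(t,x)-v_\eps(s,y))$'' with jump increment $\eta_u-\eta_v$. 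The paper handles the cross-time stochastic interaction through the functional $M[\beta,\varphi_{\delta,\delta_0}]$ and the decomposition in Lemma~\ref{stochastic_lemma_itointegralterm}, which is where the predictability issue is actually resolved; your proposal skips this step.
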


\begin{rem} \label{lem:p-bounds}
In addition, if   $u_0$ is $L^p(\R^d)$ for $p\in [2,\infty)$ then it could be concluded that $u \in L^\infty(0,T,L^p(\Omega\times \R^d))$. Furthermore, if $u_0\in L^\infty$ and there is $M>0$ such that $\eta(x,u;z)=0$ for $|u|>M$ and $M_1=\sup_{x, |u|\le M, z} | \eta(x, u; z)| <\infty$, then $|u(t,x)|\le \max\{M+ M_1, ||u_0||_{\infty}\}$ for almost every $(t,x,\omega)\in \Pi_T\times \Omega$.  We sketch a justification of this claim in Section 4. 
\end{rem}
\section{Existence of weak solution for viscous problem}\label{sec:existence-weak-solu-viscous}
Just as the deterministic problem, here also we study the corresponding regularized problem by adding a small diffusion operator
and derive some \textit{a priori} bounds. Due to the nonlinear function $\phi$ and related degeneracy,
one cannot expect classical solution and instead seeks an weak solution.

\subsection{Existence of weak solution to viscous problem}
 For a small parameter $\eps >0$, we consider the viscous approximation of \eqref{eq:levy_stochconservation_laws} as
 \begin{align}
  du(t,x) -\Delta \phi(u(t,x))\,dt =& \mbox{div}_x f(u(t,x)) \,dt +\int_{E} \eta(x, u(t,x);z)\tilde{N}(dz,dt) \notag \\
 & \qquad \quad  + \eps \Delta u(t,x)\,dt,\quad t>0, ~ x\in \R^d.\label{eq:levy_stochconservation_laws-viscous}
\end{align}
In this subsection, we establish the existence of a weak solution for the problem \eqref{eq:levy_stochconservation_laws-viscous}.
  To do this, we use  an implicit time discretization scheme. Let $\Delta t= \frac{T}{N}$ for some positive integer $N \ge 1$.
  Set $t_n= n\,\Delta t$ for $n=0,1,2\,\cdots, N$.
  
 Define
 \begin{align}
  \mathcal{N}= L^2(\Omega;H^1(\R^d)),\quad
  \mathcal{N}_n= \{ \text{the }\mathcal{F}_{n\Delta t}\text{ measurable elements of }\mathcal{N}\},\notag \\
  \mathcal{H}= L^2(\Omega;L^2(\R^d)),\quad
  \mathcal{H}_n= \{ \text{the }\mathcal{F}_{n\Delta t}\text{ measurable elements of }\mathcal{H}\}.\notag 
 \end{align}
 \begin{prop} \label{prop:time discretization}
  Assume that $\Delta t$ is small. For any given $u_n \in \mathcal{H}_n$, there exists a
   unique
   $u_{n+1} \in \mathcal{N}_{n+1}$ with $\phi(u_{n+1}) \in \mathcal{N}_{n+1}$ such that $P-\text{a.s.}$ for any $v \in H^1(\R^d)$, the following variational formula 
   holds:
   \begin{align}
    &\int_{\R^d} \Big((u_{n+1}-u_n)v + \Delta t \Big\{ \grad \phi(u_{n+1}) + \eps \grad u_{n+1} + f(u_{n+1})\Big\}\cdot \grad v \Big)\,dx \notag \\
    &= \int_{\R^d} \int_{t_n}^{t_{n+1}} \int_{E} \eta(x,u_n;z)\,v\, \tilde{N}(dz,ds)\,dx. \label{variational_formula_discrete}
   \end{align}
 \end{prop}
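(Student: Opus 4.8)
The plan is to solve \eqref{variational_formula_discrete} pathwise: for $P$-almost every $\omega$ it is a deterministic stationary quasilinear elliptic problem, and the only genuinely stochastic issues are the integrability of the data and the $\mathcal{F}_{(n+1)\Delta t}$-measurability of the solution. First I would record that the right-hand side is the $L^2(\R^d)$-valued random variable $g_n(x):=\int_{t_n}^{t_{n+1}}\int_E\eta(x,u_n;z)\,\tilde{N}(dz,ds)$. Since the integrand $s\mapsto \eta(\cdot,u_n;z)\mathbf{1}_{(t_n,t_{n+1}]}(s)$ is predictable and $\mathcal{F}_{n\Delta t}$-measurable, $g_n$ is $\mathcal{F}_{(n+1)\Delta t}$-measurable, and the It\^o--L\'evy isometry together with \ref{A5} (using $g\in L^\infty\cap L^2$ and $u_n\in\mathcal{H}_n$) gives $g_n\in\mathcal{H}_{n+1}$. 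Writing $F:=u_n+g_n\in\mathcal{H}_{n+1}$, the identity \eqref{variational_formula_discrete} asks, for fixed $\omega$, for $u\in H^1(\R^d)$ with $\phi(u)\in H^1(\R^d)$ such that
\[\int_{\R^d}uv\,dx+\Delta t\int_{\R^d}\bigl(\nabla\phi(u)+\eps\nabla u+f(u)\bigr)\cdot\nabla v\,dx=\int_{\R^d}Fv\,dx,\qquad v\in H^1(\R^d).\]

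The chief difficulty is that the degenerate diffusion $u\mapsto-\Delta\phi(u)$ is \emph{not} monotone for the natural pairing with $u$ (the pointwise quadratic form associated with $\nabla\phi(u)\cdot\nabla u$ is indefinite where $\phi'$ varies), and moreover $\phi'$ is merely $L^\infty$, so a direct Leray--Lions/Carath\'eodory treatment in $u$ is unavailable. I would remove this obstacle by the change of unknown $w=\Psi(u)$, where $\Psi(r):=\phi(r)+\eps r$. Because $\phi$ is non-decreasing, Lipschitz and $\phi(0)=0$, the function $\Psi$ is a bi-Lipschitz increasing bijection of $\R$ with $\Psi'\in[\eps,\eps+c_\phi]$; hence $\Psi^{-1}$ is Lipschitz, increasing and $\Psi^{-1}(0)=0$, and the requirement ``$u\in H^1$, $\phi(u)\in H^1$'' is equivalent to $w\in H^1$ via $u=\Psi^{-1}(w)$, $\phi(u)=w-\eps u$, $\nabla\phi(u)+\eps\nabla u=\nabla w$. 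In the new unknown the problem becomes: find $w\in H^1(\R^d)$ with
\[\langle\mathcal{A}w,v\rangle:=\int_{\R^d}\Psi^{-1}(w)v\,dx+\Delta t\int_{\R^d}\nabla w\cdot\nabla v\,dx+\Delta t\int_{\R^d}f(\Psi^{-1}(w))\cdot\nabla v\,dx=\int_{\R^d}Fv\,dx,\]
whose principal part is now the monotone, coercive Laplacian and whose zeroth-order term $\Psi^{-1}(w)$ is monotone.

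Next I would verify the hypotheses of the Minty--Browder theorem for $\mathcal{A}:H^1(\R^d)\to H^{-1}(\R^d)$. Boundedness and hemicontinuity follow from the Lipschitz continuity of the Nemytskii maps $w\mapsto\Psi^{-1}(w)$ and $w\mapsto f(\Psi^{-1}(w))$ on $L^2$. For coercivity the flux contributes nothing, since $f(\Psi^{-1}(w))\cdot\nabla w=\mathrm{div}\,\Xi(w)$ for a primitive $\Xi$ and integrates to zero, so $\langle\mathcal{A}w,w\rangle\ge\frac{1}{\eps+c_\phi}\|w\|_{L^2}^2+\Delta t\|\nabla w\|_{L^2}^2$. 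For monotonicity, using $(\Psi^{-1}(a)-\Psi^{-1}(b))(a-b)\ge\frac{1}{\eps+c_\phi}|a-b|^2$, the Lipschitz bound $\mathrm{Lip}(f\circ\Psi^{-1})\le c_f/\eps$ and Young's inequality, I obtain
\[\langle\mathcal{A}w_1-\mathcal{A}w_2,\,w_1-w_2\rangle\ge\Bigl(\tfrac{1}{\eps+c_\phi}-\tfrac{\Delta t\,c_f^2}{2\eps^2}\Bigr)\|w_1-w_2\|_{L^2}^2+\tfrac{\Delta t}{2}\|\nabla(w_1-w_2)\|_{L^2}^2.\]
This is precisely where the smallness of $\Delta t$ enters: for $\Delta t<2\eps^2/\bigl(c_f^2(\eps+c_\phi)\bigr)$ both coefficients are positive, so $\mathcal{A}$ is strongly monotone and hence a bijection onto $H^{-1}(\R^d)$. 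This produces a unique $w$, and thus a unique $u_{n+1}=\Psi^{-1}(w)\in H^1(\R^d)$ with $\phi(u_{n+1})=w-\eps u_{n+1}\in H^1(\R^d)$ solving \eqref{variational_formula_discrete}.

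Finally I would attend to the stochastic bookkeeping. Strong monotonicity makes the solution map $F\mapsto w$ Lipschitz from $H^{-1}(\R^d)$ to $H^1(\R^d)$; composed with the $\mathcal{F}_{(n+1)\Delta t}$-measurable data map $\omega\mapsto F(\omega)$, this shows that $w$, and hence $u_{n+1}=\Psi^{-1}(w)$ and $\phi(u_{n+1})$, are $\mathcal{F}_{(n+1)\Delta t}$-measurable, i.e.\ $u_{n+1}\in\mathcal{N}_{n+1}$ with $\phi(u_{n+1})\in\mathcal{N}_{n+1}$. Testing with $w$ and using coercivity gives $\|w\|_{H^1}\le C\|F\|_{L^2}$, whence $\E\|u_{n+1}\|_{H^1}^2\le\eps^{-2}\,\E\|w\|_{H^1}^2<\infty$. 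The main obstacle is the first conceptual step: the degeneracy and low regularity of $\phi$ forbid a direct monotone-operator treatment in the variable $u$, and the whole argument hinges on the change of unknown $w=\phi(u)+\eps u$ which linearises the top-order term; the Lipschitz but non-monotone flux $f$ is then a lower-order perturbation forcing the stated smallness of $\Delta t$, while propagating $\mathcal{F}_{(n+1)\Delta t}$-measurability through the pathwise solve is the remaining point requiring care.
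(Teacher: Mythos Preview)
Your argument is correct and reaches the same conclusion as the paper, but by a genuinely different route. The paper does not solve the deterministic elliptic step directly; instead it states a black-box Lemma (Lemma~3.2, citing Brezis' monograph on maximal monotone operators) asserting existence, uniqueness, the \textit{a priori} estimate, and continuity of the solution map $X\mapsto(u,\phi(u))$ from $L^2(\R^d)$ to $H^1(\R^d)^2$, and then observes that $X=u_n+\int_{t_n}^{t_{n+1}}\int_E\eta(x,u_n;z)\,\tilde N(dz,ds)\in\mathcal H_{n+1}$ and that continuity of this map propagates $\mathcal F_{(n+1)\Delta t}$-measurability. You instead make the argument self-contained by the change of unknown $w=\phi(u)+\eps u$, which indeed linearises the top-order term and turns the problem into one where Minty--Browder applies directly; your explicit monotonicity computation also exposes the precise smallness threshold $\Delta t<2\eps^2/\big(c_f^2(\eps+c_\phi)\big)$, which the paper leaves implicit. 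The trade-off is that your threshold depends on $\eps$, but since the time-discretisation limit $\Delta t\to 0$ is taken at fixed $\eps$ before the viscosity limit, this causes no trouble downstream. One small point of wording: your remark that ``the pointwise quadratic form associated with $\nabla\phi(u)\cdot\nabla u$ is indefinite'' is not quite right, since $\nabla\phi(u)\cdot\nabla u=\phi'(u)|\nabla u|^2\ge 0$; the genuine obstruction is the \emph{operator} monotonicity, i.e.\ the sign of $\int(\nabla\phi(u_1)-\nabla\phi(u_2))\cdot\nabla(u_1-u_2)\,dx$, which indeed need not be nonnegative, and your substitution neatly sidesteps this.
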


 Before proving the proposition, first we state a key deterministic lemma, related to the weak solution of degenerate parabolic
   equations. We have the following lemma, a proof of which could be found in [ page $19$, \cite{brezis} ].
\begin{lem} \label{lem:deterministic}
 Assume that $\Delta t$ is small and $X \in L^2(\R^d)$. Then, for fixed positive parameter $\eps>0$,
 \begin{itemize}
  \item[(1)] there exists a unique $u \in H^1(\R^d)$ with $\phi(u) \in H^1(\R^d)$ such that, for any $v\in H^1(\R^d)$
 \begin{align}
  \int_{\R^d} \Big(uv + \Delta t \Big\{ \grad \phi(u) + \eps \grad u + f(u)\Big\}\cdot \grad v \Big)\,dx
    = \int_{\R^d} Xv\,dx.\label{variational_formula}
 \end{align}
 \item[(2)] There exists a constant $C= C(\Delta t)>0$ such that the following  \textit{a priori}
 estimate holds
 \begin{align}
  ||u||_{L^2(\R^d)}^2 + ||\phi(u)||_{H^1(\R^d)}^2 + \eps ||\grad u||_{L^2(\R^d)}^2 \le C ||X||_{L^2(\R^d)}^2.
  \label{estimate:a-priori-deterministic} 
 \end{align}
  \item[(3)] The map $\Theta : X \in L^2(\R^d) \mapsto (u,\phi(u)) \in H^1(\R^d)^2$ is continuous.
 \end{itemize}
\end{lem}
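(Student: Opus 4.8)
The plan is to remove the degeneracy by the substitution $w:=\Psi(u)$, where $\Psi:=\phi+\eps\,\mathrm{id}$, and to recast \eqref{variational_formula} as the equation $\mathcal A(w)=X$ for a \emph{strongly monotone} operator $\mathcal A$ on $H^1(\R^d)$. Since $\phi$ is nondecreasing and Lipschitz with $\phi(0)=0$, the function $\Psi$ is strictly increasing and bi-Lipschitz, with $\eps\le\Psi'\le c_\phi+\eps$; hence its inverse $b:=\Psi^{-1}$ is Lipschitz with $b(0)=0$ and $\tfrac1{c_\phi+\eps}\le b'\le\tfrac1\eps$. Writing $u=b(w)$ (so that $\phi(u)=w-\eps u$), the identity \eqref{variational_formula} is equivalent to the equation $\mathcal A(w)=X$ in $H^{-1}(\R^d)$, where
\[
\langle\mathcal A(w),v\rangle:=\int_{\R^d}\Big(b(w)\,v+\Delta t\,\grad w\cdot\grad v+\Delta t\,f(b(w))\cdot\grad v\Big)\,dx,\qquad v\in H^1(\R^d),
\]
and $X$ is identified with the functional $v\mapsto\int_{\R^d}Xv\,dx$.

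First I would verify that $\mathcal A:H^1(\R^d)\to H^{-1}(\R^d)$ is bounded and demicontinuous: the Nemytskii maps $w\mapsto b(w)$ and $w\mapsto f(b(w))$ are Lipschitz from $L^2$ to $L^2$ (since $b$ and $f\circ b$ are Lipschitz and vanish at $0$). The crucial step is strong monotonicity. For $w_1,w_2\in H^1$, monotonicity of $b$ gives $\int(b(w_1)-b(w_2))(w_1-w_2)\ge\tfrac1{c_\phi+\eps}\|w_1-w_2\|_{L^2}^2$, the diffusion term contributes $\Delta t\|\grad(w_1-w_2)\|_{L^2}^2$, while the convection term is controlled by
\[
\Big|\Delta t\!\int_{\R^d}\!(f(b(w_1))-f(b(w_2)))\cdot\grad(w_1-w_2)\,dx\Big|\le\frac{\Delta t}{2}\|\grad(w_1-w_2)\|_{L^2}^2+\frac{\Delta t\,c_f^2}{2\eps^2}\|w_1-w_2\|_{L^2}^2 .
\]
This is precisely where the hypothesis that $\Delta t$ is small enters: choosing $\Delta t$ so that $\tfrac{\Delta t\,c_f^2}{2\eps^2}<\tfrac1{c_\phi+\eps}$ makes $\mathcal A$ strongly monotone, with $\langle\mathcal A(w_1)-\mathcal A(w_2),w_1-w_2\rangle\ge c\,\|w_1-w_2\|_{H^1}^2$ for some $c=c(\Delta t,\eps,c_\phi,c_f)>0$. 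Because $\Psi(0)=0$ and $f(0)=0$ give $\mathcal A(0)=0$, coercivity follows from strong monotonicity, and the Browder--Minty theorem yields a unique $w\in H^1(\R^d)$ with $\mathcal A(w)=X$; setting $u:=b(w)$ gives, via the chain rule for composition of an $H^1$ function with a Lipschitz map, $u\in H^1(\R^d)$ and $\phi(u)=w-\eps u\in H^1(\R^d)$. This proves part (1), uniqueness being immediate from strong monotonicity.

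For the a priori estimate (2) I would test \eqref{variational_formula} with $v=u$. The convection term drops out because $\int_{\R^d}f(u)\cdot\grad u\,dx=\int_{\R^d}\mathrm{div}\,\mathcal F(u)\,dx=0$ for $u\in H^1(\R^d)$, where $\mathcal F_k(r)=\int_0^r f_k$; one checks $\mathcal F_k(u)\in W^{1,1}(\R^d)$ using $f_k(0)=0$. What remains is
\[
\|u\|_{L^2}^2+\Delta t\!\int_{\R^d}\!\phi'(u)|\grad u|^2\,dx+\Delta t\,\eps\|\grad u\|_{L^2}^2=\int_{\R^d}Xu\,dx\le\tfrac12\|X\|_{L^2}^2+\tfrac12\|u\|_{L^2}^2 .
\]
This controls $\|u\|_{L^2}^2+\eps\|\grad u\|_{L^2}^2$ and $\int\phi'(u)|\grad u|^2\,dx$ by $C\|X\|_{L^2}^2$; since $0\le\phi'\le c_\phi$ one has $|\grad\phi(u)|^2=\phi'(u)^2|\grad u|^2\le c_\phi\,\phi'(u)|\grad u|^2$, so $\|\grad\phi(u)\|_{L^2}^2\le C\|X\|_{L^2}^2$, and $\|\phi(u)\|_{L^2}\le c_\phi\|u\|_{L^2}$ handles the rest, giving \eqref{estimate:a-priori-deterministic}.

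Finally, for the continuity of $\Theta$ (part (3)), strong monotonicity gives the Lipschitz bound $\|w_1-w_2\|_{H^1}\le\tfrac1c\|X_1-X_2\|_{L^2}$, so $w_n\to w$ in $H^1(\R^d)$ whenever $X_n\to X$ in $L^2(\R^d)$; consequently $u_n=b(w_n)\to b(w)=u$ and $\phi(u_n)=w_n-\eps u_n\to\phi(u)$ in $L^2(\R^d)$, and $(u_n)$ is bounded in $H^1$ by (2). The remaining, genuinely delicate, point is to upgrade this to strong $H^1$ convergence of $u_n$, since composition with the merely Lipschitz $b$ need not be continuous on $H^1$. Here I would use the energy identity obtained by testing with $v=u_n$: using $u_n\to u$ and $X_n\to X$ in $L^2$, together with the weak lower semicontinuity of $w\mapsto\|\grad w\|_{L^2}$ applied to both $u_n\rightharpoonup u$ and $G(u_n)\rightharpoonup G(u)$ in $H^1$ (with $G'=\sqrt{\phi'}$), one forces $\|\grad u_n\|_{L^2}\to\|\grad u\|_{L^2}$; combined with $\grad u_n\rightharpoonup\grad u$, the Radon--Riesz property of $L^2$ yields strong convergence $\grad u_n\to\grad u$, whence $u_n\to u$ and $\phi(u_n)=w_n-\eps u_n\to\phi(u)$ in $H^1(\R^d)$. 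The two main obstacles are therefore the loss of monotonicity caused by the non-monotone convection $\mathrm{div}\,f(u)$—circumvented by the change of unknown and the smallness of $\Delta t$—and this last strong-convergence step in part (3).
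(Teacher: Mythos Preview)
Your argument is correct. Note that the paper does not actually give a proof of this lemma: it simply refers the reader to Br\'ezis's monograph on maximal monotone operators (page~19 of \cite{brezis}), where the abstract resolvent theory for maximal monotone operators, perturbed by a Lipschitz map, yields the result. Your proof is therefore a genuine, self-contained alternative.

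The main difference in approach is your Kirchhoff-type change of unknown $w=\Psi(u)=\phi(u)+\eps u$, which converts the degenerate quasilinear problem into an equation $\mathcal A(w)=X$ for a \emph{strongly} monotone operator on $H^1(\R^d)$, so that Browder--Minty applies directly. This is arguably more elementary than invoking maximal monotone theory on the original variable, and it makes the role of the smallness of $\Delta t$ completely explicit: your condition $\tfrac{\Delta t\,c_f^2}{2\eps^2}<\tfrac{1}{c_\phi+\eps}$ shows that the threshold depends on $\eps$, which is harmless here since the lemma is stated for fixed $\eps>0$ and is only used to build the viscous approximation before $\eps$ is sent to zero. Your treatment of~(2) is exactly the computation the paper carries out a few lines later in the time-discrete setting (using $\int f(u)\cdot\nabla u=0$ and $|\nabla\phi(u)|^2\le c_\phi\,\phi'(u)|\nabla u|^2$). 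For~(3), the Lipschitz dependence $X\mapsto w$ in $H^1$ follows immediately from strong monotonicity, but you are right that passing from $w$ to $u=b(w)$ costs the $H^1$ topology; your energy-identity argument---combining $\|u_n\|_{L^2}^2+\Delta t\|\nabla G(u_n)\|_{L^2}^2+\Delta t\,\eps\|\nabla u_n\|_{L^2}^2\to\int Xu$ with weak lower semicontinuity of the two gradient norms to force convergence of each, then Radon--Riesz---is a clean way to recover strong $H^1$ convergence of $u_n$ and hence of $\phi(u_n)=w_n-\eps u_n$.
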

\vspace{.2cm}

\noindent{\bf Proof of the Proposition \ref{prop:time discretization}} 
Let $u_n \in \mathcal{N}_n$.
Take $X= u_n +\int_{t_n}^{t_{n+1}} \int_{E} \eta(x,u_n;z)\tilde{N}(dz,ds) $. Then, by the assumption \ref{A5}, we
obtain 
\begin{align*}
 \E \Big[||X||^2_{L^2(\R^d)}\Big]\le ||u_n||_{\mathcal{H}}^2 + C\,\Delta t \big(||g||_{L^2(\R^d)}^2 + ||u_n||_{\mathcal{H}}^2 \big).
\end{align*}
This shows that for a.s. $\omega \in \Omega$, $X \in  L^2(\R^d)$. Therefore, one can use the Lemma \ref{lem:deterministic}, and conclude 
that for almost surely $\omega \in \Omega$, there exist unique $u(\omega)$ satisfying  the variational equality \eqref{variational_formula_discrete}. 
Moreover, by construction $X \in \mathcal{H}_{n+1}$. Thus, due to the continuity of $\Theta$ for the $\mathcal{F}_{(n+1)\Delta t}$
measurability and to \textit{a priori} estimate \eqref{estimate:a-priori-deterministic}, we conclude that $u \in \mathcal{N}_{n+1}$ with $\phi(u) \in \mathcal{N}_{n+1}$.
 We denote this solution $u$ by $u_{n+1}$. Hence the proof of the proposition follows.
 
 \subsubsection{\bf \textit{A priori} estimate}

 Note that, for any $v\in \mathcal{D}(\R^d)$, $\int_{\R^d} f(v)\cdot \grad v\,dx=0$ and hence true for any $v\in H^1(\R^d)$ by density argument.
 We choose a test function $v= u_{n+1}$ in \eqref{variational_formula_discrete} and have
\begin{align*}
 &\int_{\R^d} (u_{n+1}-u_n)u_{n+1}\,dx + \Delta t \int_{\R^d} \phi^\prime(u_{n+1})|\grad u_{n+1}|^2\,dx
 + \eps\,\Delta t \int_{\R^d}|\grad u_{n+1}|^2\,dx\notag \\
 & = \int_{\R^d} \int_{t_n}^{t_{n+1}} \int_{E} \eta(x,u_n;z) \, \tilde{N}(dz,ds)u_{n+1}\,dx \notag \\
 & \le \int_{\R^d} \int_{t_n}^{t_{n+1}} \int_{E} \eta(x,u_n;z)u_n\, \tilde{N}(dz,ds)\,dx + \frac{\alpha}{2} ||u_{n+1}-u_n||_{L^2(\R^d)}^2 \notag \\
  & \qquad + \frac{1}{2\alpha} \int_{\R^d} \Big(\int_{t_n}^{t_{n+1}} \int_{E} \eta(x,u_n;z)\, \tilde{N}(dz,ds)\Big)^2\,dx,\quad
  \text{for some}~~\alpha >0.
\end{align*}
Since $\int_{\R^d} |\grad \phi(u)|^2\,dx = \int_{\R^d} |\phi^\prime(u)\,\grad u|^2\,dx \le c_\phi \int_{\R^d} \phi^\prime(u)
|\grad u|^2\,dx$, we see that 
 \begin{align}
 \frac{\Delta t}{c_\phi} ||\grad \phi(u)||_{\mathcal{H}}^2 \le \Delta t \, \E \Big[\int_{\R^d} \phi^\prime(u)|\grad u|^2\,dx\Big].\label{esti:grad-phi}
 \end{align}
 In view of the assumption \ref{A5}, the inequality \eqref{esti:grad-phi}, It\^{o}-L\'{e}vy isometry, and the fact that 
 for any $a,b \in \R, (a-b)a
 = \frac{1}{2}(a^2 + (a-b)^2 -b^2)$, we obtain
\begin{align*}
 &\frac{1}{2} \Big[ ||u_{n+1}||_{\mathcal{H}}^2 + ||u_{n+1}-u_n||_{\mathcal{H}}^2 - ||u_n||_{\mathcal{H}}^2 \Big] +
  \frac{\Delta t}{c_\phi} ||\grad \phi(u_{n+1})||_{\mathcal{H}}^2 +  \eps \,\Delta t ||\grad u_{n+1}||_{\mathcal{H}}^2 \notag \\
  & \le  \frac{\alpha}{2} ||u_{n+1}-u_n||_{\mathcal{H}}^2 + \frac{C\,\Delta t}{2\alpha} \big( 1+ ||u_n||_{\mathcal{H}}^2\big). 
\end{align*}
 Since $\alpha >0$ is arbitrary, one can choose $\alpha >0$ so that
\begin{align}
 & ||u_{n}||_{\mathcal{H}}^2 +  \sum_{k=0}^{n-1}||u_{k+1}-u_k||_{\mathcal{H}}^2 +
  \frac{\Delta t}{c_\phi}\sum_{k=0}^{n-1} ||\grad \phi(u_{k+1})||_{\mathcal{H}}^2 +  \eps \Delta t \sum_{k=0}^{n-1}
  ||\grad u_{k+1}||_{\mathcal{H}}^2 \notag \\
  & \le  C_1 + C_2\Delta t \sum_{k=0}^{n-1} ||u_{k}||_{\mathcal{H}}^2,\quad \text{for some constants}~~ C_1, C_2 >0.\label{esti:discrete-1}
\end{align}
Thanks to discrete Gronwall's lemma, one has from \eqref{esti:discrete-1},
\begin{align}
  ||u_{n}||_{\mathcal{H}}^2 +  \sum_{k=0}^{n-1}||u_{k+1}-u_k||_{\mathcal{H}}^2 +
  \frac{\Delta t}{c_\phi}\sum_{k=0}^{n-1} ||\grad \phi(u_{k+1})||_{\mathcal{H}}^2 +  \eps \Delta t \sum_{k=0}^{n-1}
  ||\grad u_{k+1}||_{\mathcal{H}}^2 \le  C \label{a-prioriestimate:1}
\end{align}
For fixed $\Delta t = \frac{T}{N}$, we define 
\begin{equation*}
 u^{\Delta t} (t)= \sum_{k=1}^N u_k {\bf 1}_{[(k-1)\Delta t, k \Delta t )}(t); \quad 
  \tilde{u}^{\Delta t}(t)= \sum_{k=1}^N \Big[ \frac{u_k - u_{k-1}}{\Delta t}( t- (k-1)\Delta t) + u_{k-1}\Big] 
  {\bf 1}_{[(k-1)\Delta t, k \Delta t )}(t)
 \end{equation*} 
 with $u^{\Delta t} (t)= u_0$ for $t<0$. Similarly, we define 
 \begin{align*}
 & \tilde{B}^{\Delta t}(t)= \sum_{k=1}^N \Big[ \frac{B_k - B_{k-1}}{\Delta t}[ t- (k-1)\Delta t] + B_{k-1}\Big] {\bf 1}_{[(k-1) 
 \Delta t, k \Delta t )}(t), 
\end{align*}
where 
\begin{align*}
 B_n &= \sum_{k=0}^{n-1} \int_{k\Delta t} ^{(k+1)\Delta t} \int_{E} \eta(x,u_k;z) \tilde{N}(dz,ds)
 = \int_0^{n \Delta t} \int_{E} \eta(x,u^{\Delta t}(s-\Delta t);z) \tilde{N}(dz,ds).
\end{align*}
A straightforward calculation shows that 
\begin{equation*}
 \begin{cases}
 \big\|u^{\Delta t}\big \|_{L^\infty(0,T;\mathcal{H})}= \underset{k=1,2,\cdots,N}\max\, \big\|u_k\big\|_{\mathcal{H}}; \quad
   \big\|\tilde{u}^{\Delta t}\big \|_{L^\infty(0,T;\mathcal{H})}= \underset{k=0,1,\cdots,N}\max\, \big\|u_k\big \|_{\mathcal{H}}, \\
   \big \| u^{\Delta t}-\tilde{u}^{\Delta t}\big\|_{L^2(0,T;\mathcal{H})}^2 \le \Delta t \sum_{k=0}^{N-1} \big \|u_{k+1}-u_k\big\|_{\mathcal{H}}^2
  .
  \end{cases}
\end{equation*}
\vspace{.1cm}

Since $\phi$ is a Lipschitz continuous function with $\phi(0)=0$, in view of the above definitions and \textit{a priori} estimate 
\eqref{a-prioriestimate:1}, we have the following proposition.
\begin{prop} \label{Prop: a-priori_bound_1}
Assume  that $\Delta t$ is small. Then  $u^{\Delta t},\, \tilde{u}^{\Delta t}$ are bounded sequences in $L^\infty(0,T;\mathcal{H})$;
 $\phi(u^{\Delta t})$, $\sqrt{\epsilon}u^{\Delta t}$ are a bounded sequences in $L^2(0,T;\mathcal{N})$ 
and $ || u^{\Delta t}-\tilde{u}^{\Delta t}||_{L^2(0,T;\mathcal{H})}^2
 \le C \Delta t$. \\ Moreover, $u^{\Delta t}-u^{\Delta t}(\cdot -\Delta t)\goto 0$ in $L^2 (\Omega \times \Pi_T)$.
\end{prop}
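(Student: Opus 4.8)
The plan is to read every assertion off the single \textit{a priori} bound \eqref{a-prioriestimate:1}, which after the discrete Gronwall step holds with a constant $C$ independent of both $n$ and $\Delta t$, together with the elementary norm identities recorded just above the statement. First I would extract from \eqref{a-prioriestimate:1} the uniform control $\max_{0\le k\le N}\|u_k\|_{\mathcal{H}}^2\le C$ (the constant being the same for every $n\le N$ is exactly the output of the Gronwall step). Since $\|u^{\Delta t}\|_{L^\infty(0,T;\mathcal{H})}=\max_{1\le k\le N}\|u_k\|_{\mathcal{H}}$ and $\|\tilde u^{\Delta t}\|_{L^\infty(0,T;\mathcal{H})}=\max_{0\le k\le N}\|u_k\|_{\mathcal{H}}$, the $L^\infty(0,T;\mathcal{H})$ boundedness of both interpolants is immediate.

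Next I would treat the two $L^2(0,T;\mathcal{N})$ bounds by splitting the $\mathcal{N}=L^2(\Omega;H^1(\R^d))$ norm into its zeroth-order part and its gradient part. Because $u^{\Delta t}$ is piecewise constant in time, $\|\phi(u^{\Delta t})\|_{L^2(0,T;\mathcal{N})}^2=\Delta t\sum_{k=1}^N\big(\|\phi(u_k)\|_{\mathcal{H}}^2+\|\grad\phi(u_k)\|_{\mathcal{H}}^2\big)$. The gradient sum is bounded by $c_\phi C$ directly from the third term of \eqref{a-prioriestimate:1}; the zeroth-order sum is handled by the Lipschitz bound $|\phi(u_k)|\le c_\phi|u_k|$ together with $\|u_k\|_{\mathcal{H}}^2\le C$, so that $\Delta t\sum_{k=1}^N\|\phi(u_k)\|_{\mathcal{H}}^2\le c_\phi^2 C\,(N\Delta t)=c_\phi^2 CT$. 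The identical decomposition gives $\|\sqrt{\eps}\,u^{\Delta t}\|_{L^2(0,T;\mathcal{N})}^2=\eps\Delta t\sum_{k=1}^N\|u_k\|_{\mathcal{H}}^2+\eps\Delta t\sum_{k=1}^N\|\grad u_k\|_{\mathcal{H}}^2$, whose second piece is controlled by the fourth term of \eqref{a-prioriestimate:1} and whose first piece is at most $\eps T C$.

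For the last two claims I would use the increment sum $\sum_{k=0}^{N-1}\|u_{k+1}-u_k\|_{\mathcal{H}}^2\le C$, again read off \eqref{a-prioriestimate:1}. Combined with the recorded inequality $\|u^{\Delta t}-\tilde u^{\Delta t}\|_{L^2(0,T;\mathcal{H})}^2\le\Delta t\sum_{k=0}^{N-1}\|u_{k+1}-u_k\|_{\mathcal{H}}^2$ this yields $\|u^{\Delta t}-\tilde u^{\Delta t}\|_{L^2(0,T;\mathcal{H})}^2\le C\Delta t$. For the shifted difference, on each interval $[(k-1)\Delta t,k\Delta t)$ one has $u^{\Delta t}(t)-u^{\Delta t}(t-\Delta t)=u_k-u_{k-1}$ (using the convention $u^{\Delta t}\equiv u_0$ for $t<0$), so that $\|u^{\Delta t}-u^{\Delta t}(\cdot-\Delta t)\|_{L^2(\Omega\times\Pi_T)}^2=\Delta t\sum_{k=1}^N\|u_k-u_{k-1}\|_{\mathcal{H}}^2\le C\Delta t\goto 0$.

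The computations are routine; the only point that genuinely needs care is the uniformity of the constant. The $L^\infty$ and the two $L^2(0,T;\mathcal{N})$ bounds rely on the fact that the Gronwall constant in \eqref{a-prioriestimate:1} does not degenerate as $n$ runs up to $N$ or as $\Delta t\goto 0$, and the zeroth-order parts of the $\mathcal{N}$-norms stay finite precisely because the weight $\Delta t$ multiplied by the number $N$ of time steps equals the fixed horizon $T$ rather than growing. Once that bookkeeping is in place, nothing further is required.
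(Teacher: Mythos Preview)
Your proposal is correct and follows exactly the route the paper indicates: the paper does not give a detailed proof of this proposition but simply remarks that, since $\phi$ is Lipschitz with $\phi(0)=0$, all the assertions follow from the norm identities displayed just before the statement together with the \textit{a priori} estimate \eqref{a-prioriestimate:1}. You have filled in precisely those computations, including the same control of the shifted difference via $\Delta t\sum_k\|u_{k}-u_{k-1}\|_{\mathcal{H}}^2\le C\Delta t$.
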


Next, we want to find some upper bound for  $\tilde{B}^{\Delta t}(t)$. Regarding this, we have the following proposition.
\begin{prop}\label{Prop: a-priori_bound_2}
$\tilde{B}^{\Delta t}$ is a bounded sequence in $L^2(\Omega \times \Pi_T)$ and 
\begin{align}
 \Big\| \tilde{B}^{\Delta t}(\cdot) - \int_{0}^\cdot \int_{E}  \eta(x,u^{\Delta t}(s-\Delta t);z) 
 \tilde{N}(dz,ds)\Big\|_{L^2(\Omega \times \R^d)}^2 \le C \Delta t. \notag
\end{align}
\end{prop}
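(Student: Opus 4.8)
The plan is to compare $\tilde B^{\Delta t}$ with the genuine stochastic integral
\[
\mathcal{B}^{\Delta t}(t):=\int_0^t\int_E \eta(x,u^{\Delta t}(s-\Delta t);z)\,\tilde N(dz,ds),
\]
whose restriction to the grid points satisfies $\mathcal{B}^{\Delta t}(n\Delta t)=B_n$; thus $\tilde B^{\Delta t}$ is precisely the piecewise-linear interpolation of $\mathcal{B}^{\Delta t}$ through the nodes $(n\Delta t,B_n)$. Note first that on each interval $s\in(k\Delta t,(k+1)\Delta t]$ the integrand $\eta(\cdot,u^{\Delta t}(s-\Delta t);\cdot)=\eta(\cdot,u_k;\cdot)$ is $\mathcal{F}_{k\Delta t}$-measurable, hence predictable, so the It\^o--L\'evy isometry applies to $\mathcal{B}^{\Delta t}$.

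First I would establish that $\mathcal{B}^{\Delta t}$ is bounded in $L^2(\Omega\times\Pi_T)$. For fixed $t$, the It\^o--L\'evy isometry gives
\[
\E\int_{\R^d}\big|\mathcal{B}^{\Delta t}(t)\big|^2\,dx=\E\int_0^t\!\int_E\!\int_{\R^d}\eta^2(x,u^{\Delta t}(s-\Delta t);z)\,dx\,m(dz)\,ds,
\]
and assumption \ref{A5} together with $g\in L^\infty\cap L^2$ bounds the inner integral by $C\|h_2\|_{L^2(E,m)}^2\big(\|g\|_{L^2}^2+\|g\|_{L^\infty}^2\|u^{\Delta t}(s-\Delta t)\|_{L^2(\R^d)}^2\big)$. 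Taking expectations and invoking the uniform bound $\E\|u_k\|_{L^2}^2\le C$ from \eqref{a-prioriestimate:1} yields $\E\int_{\R^d}|\mathcal{B}^{\Delta t}(t)|^2\,dx\le Ct$, uniformly in $\Delta t$; integrating in $t$ gives the $L^2(\Omega\times\Pi_T)$ bound. The boundedness of $\tilde B^{\Delta t}$ then follows once the interpolation error below is shown to be small.

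For the quantitative estimate, fix $t\in[(k-1)\Delta t,k\Delta t)$ and set $\tau:=t-(k-1)\Delta t\in[0,\Delta t)$ and $M(\tau):=\int_{(k-1)\Delta t}^{(k-1)\Delta t+\tau}\int_E\eta(x,u_{k-1};z)\,\tilde N(dz,ds)$, so that $M(\Delta t)=B_k-B_{k-1}$ and on this interval
\[
\tilde B^{\Delta t}(t)-\mathcal{B}^{\Delta t}(t)=\frac{\tau}{\Delta t}\,M(\Delta t)-M(\tau).
\]
Using $|a-b|^2\le 2|a|^2+2|b|^2$, the It\^o--L\'evy isometry, assumption \ref{A5} and \eqref{a-prioriestimate:1} exactly as above, one gets $\E\int_{\R^d}|M(\sigma)|^2\,dx\le C\sigma$ for every $\sigma\le\Delta t$; hence
\[
\E\int_{\R^d}\big|\tilde B^{\Delta t}(t)-\mathcal{B}^{\Delta t}(t)\big|^2\,dx\le 2\frac{\tau^2}{(\Delta t)^2}\,C\Delta t+2C\tau\le C\Delta t,
\]
uniformly in $t$, which is the asserted bound in $L^2(\Omega\times\R^d)$. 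Integrating this over $t\in[0,T]$ (equivalently, summing the $O((\Delta t)^2)$ contributions of the $N=T/\Delta t$ subintervals) produces a bound of order $T\Delta t$ in $L^2(\Omega\times\Pi_T)$, and combined with the previous paragraph shows $\tilde B^{\Delta t}$ is bounded there.

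The computations are routine; the point requiring the most care is the bookkeeping in $\Delta t$. After squaring, the interpolation term $\tfrac{\tau}{\Delta t}M(\Delta t)$ carries a factor $(\Delta t)^{-2}$, so it is essential that the isometry furnishes $\E\int_{\R^d}|M(\Delta t)|^2\,dx=O(\Delta t)$ (not merely $O(1)$) in order that $\tfrac{\tau^2}{(\Delta t)^2}\,\E\int_{\R^d}|M(\Delta t)|^2\,dx=O(\Delta t)$; and the final summation over the $N=T/\Delta t$ nodes only closes because each subinterval contributes $O((\Delta t)^2)$. The other structural ingredient one must not overlook is the predictability of $s\mapsto\eta(\cdot,u^{\Delta t}(s-\Delta t);\cdot)$, which is exactly why the one-step delay $u^{\Delta t}(s-\Delta t)$ is built into the scheme and which legitimizes every use of the isometry.
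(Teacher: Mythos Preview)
Your proof is correct and follows essentially the same approach as the paper: you express the difference on each subinterval as $\frac{\tau}{\Delta t}M(\Delta t)-M(\tau)$, apply the It\^o--L\'evy isometry together with \ref{A5} and the uniform bound \eqref{a-prioriestimate:1} to get $\E\int_{\R^d}|M(\sigma)|^2\,dx\le C\sigma$, and conclude. The only cosmetic difference is that the paper bounds $\tilde B^{\Delta t}$ in $L^2(\Omega\times\Pi_T)$ directly via $\|\tilde B^{\Delta t}\|^2\le \Delta t\sum_k\|B_k\|^2$, whereas you first bound the true stochastic integral $\mathcal{B}^{\Delta t}$ and then invoke the interpolation-error estimate plus the triangle inequality; both routes are equally valid.
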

\begin{proof} First we prove the boundedness of $\tilde{B}^{\Delta t}(t)$. By using definition of $\tilde{B}^{\Delta t}(t)$,
the assumption \ref{A5}, and boundedness of $u^{\Delta t}$ in  $L^\infty(0,T;\mathcal{H})$,  we obtain
\begin{align*}
 \big\| \tilde{B}^{\Delta t} \big\|_{L^2\big(0,T;L^2(\Omega,L^2(\R^d))\big)}^2 &\le \Delta t \sum_{k=0}^N ||B_k||_{L^2(\Omega \times \R^d)}^2 \notag \\
 & \le   \Delta t \sum_{k=0}^N  \E \Big[\Big| \int_{\R^d} \int_{0}^{k\Delta t}\int_{E}  \eta(x,u^{\Delta t}(s-\Delta t);z) \tilde{N}(dz,ds)\,dx
 \Big|^2\Big] \notag \\
 & \le C \Delta t \sum_{k=0}^N  \E \Big[ \int_{\R^d} \int_{0}^{k\Delta t} g^2(x)\big(1+ |u^{\Delta t}(s-\Delta t)|^2\big)\,dx\,ds\Big] \notag \\
 & \le C\big(1+ ||u^{\Delta t}||_{L^\infty(0,T;L^2(\Omega \times \R^d))}\big) <  + \infty.
\end{align*}
Thus,  $\tilde{B}^{\Delta t}$ is a bounded sequence in $L^2(\Omega \times \Pi_T)$.
\vspace{.2cm}

To prove second part of the proposition, we see that for any $t\in \big[n\Delta t,(n+1)\Delta t\big)$,
\begin{align*}
  &\tilde{B}^{\Delta t}(t)-  \int_{0}^t \int_{E}  \eta(x,u^{\Delta t}(s-\Delta t);z) \tilde{N}(dz,ds) \notag \\
   = &\frac{t-n\Delta t}{\Delta t} \int_{n\Delta t}^{(n+1)\Delta t} \int_{E} \eta(x,u^{\Delta t}(s-\Delta t);z) \tilde{N}(dz,ds)
  - \int_{n\Delta t}^{ t} \int_{E} \eta(x,u^{\Delta t}(s-\Delta t);z) \tilde{N}(dz,ds) \notag \\
   =& \frac{t-n\Delta t}{\Delta t} \int_{n\Delta t}^{(n+1)\Delta t} \int_{E} \eta(x,u_n;z) \tilde{N}(dz,ds)
  - \int_{n\Delta t}^{ t} \int_{E} \eta(x,u_n;z) \tilde{N}(dz,ds).
\end{align*}
Therefore, in view of \eqref{a-prioriestimate:1} and assumption \ref{A5}, we have
\begin{align*}
 &  \Big\| \tilde{B}^{\Delta t}(t) - \int_{0}^t \int_{E}  \eta(x,u^{\Delta t}(s-\Delta t);z) \tilde{N}(dz,ds)\Big\|_{L^2(\Omega \times \R^d)}^2 \notag \\
 =& \E\Bigg[ \int_{\R^d} \Big| \frac{t-n\Delta t}{\Delta t} \int_{n\Delta t}^{(n+1)\Delta t} \int_{E} \eta(x,u_n;z) \tilde{N}(dz,ds)
  - \int_{n\Delta t}^{ t} \int_{E} \eta(x,u_n;z) \tilde{N}(dz,ds)\Big|^2\,dx \Bigg] \\
   \le & 2  \int_{\R^d} \E\Bigg[ \Big( \frac{t-n\Delta t}{\Delta t}\Big)^2  \int_{n\Delta t}^{(n+1)\Delta t} \int_{E} \eta^2(x,u_n;z)\,m(dz)\,ds 
    + \int_{n\Delta t}^{ t} \int_{E} \eta^2(x,u_n;z)\,m(dz)\,ds\Bigg]dx   \\
    \le & C\big(1+ ||u_n||_{\mathcal{H}}^2\big) \Big[ \frac{(t-n\Delta t)^2}{\Delta t} + (t-n\Delta t)\Big] \le  C\,\Delta t.
\end{align*} 
This completes the proof.
\end{proof}
\subsubsection{\bf Convergence of $u^{\Delta t}(t,x)$}

Thanks to Proposition \ref{Prop: a-priori_bound_1} and Lipschitz property of $f$ and $\phi$, there exist $u,~ \phi_u$ and $ f_u$ 
such that (up to a subsequence) 

\begin{equation}\label{convergence:weak-1}
\left\{\begin{array}{lcl}
 u^{\Delta t}\rightharpoonup^* u & \text{in} &  L^\infty\big(0,T;L^2(\Omega \times \R^d)\big)  \\
u^{\Delta t}\rightharpoonup u & \text{in}&  L^2\big((0,T)\times\Omega; H^1(\R^d)\big)  \quad\text{ (for fixed positive $\epsilon$)}
\\
  \phi(u^{\Delta t}) \rightharpoonup \phi_u & \text{in}&  L^2\big((0,T)\times \Omega;H^1(\R^d)\big) \\
  F(u^{\Delta t}) \rightharpoonup f_u & \text{in} &  L^2\big((0,T)\times\Omega\times\R^d\big).
  \end{array}
  \right.
\end{equation} 

Next, we want to identify the weak limits $\phi_u$ and $f_u$. 
Note that, for any $v\in H^1(\R^d)$, we can rewrite \eqref{variational_formula_discrete}, in terms of $u^{\Delta t}, \tilde{u}^{\Delta t}$
and $\tilde{B}^{\Delta t}$ as 
\begin{align}
 \int_{\R^d} \Big(\frac{\partial}{\partial t}(\tilde{u}^{\Delta t}-\tilde{B}^{\Delta t})(t)\, v + 
  \Big\{ \grad \phi(u^{\Delta t}(t)) + \eps \grad u^{\Delta t}(t) + f(u^{\Delta t}(t))\Big\}\cdot \grad v \Big)\,dx =0.
  \label{variational_formula_discrete_1}
\end{align}

\begin{lem}\label{lemma:cauchyness}
  $\{u^{\Delta t}\}$ is a Cauchy sequence in $L^2(\Omega \times \Pi_T)$.
\end{lem}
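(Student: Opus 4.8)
The plan is to upgrade the weak convergences \eqref{convergence:weak-1} to strong convergence by a quantitative energy estimate comparing two approximations, carried out in the $H^{-1}(\R^d)$ duality rather than in $L^2$. This duality is forced on us by the degeneracy: since $\{\phi'=0\}$ has positive measure (the strongly degenerate regime of \cite{carrillo_1999}), testing the difference equation against $u^{\Delta t}-u^{\delta}$ in $L^2$ produces the term $\int\grad(\phi(u^{\Delta t})-\phi(u^{\delta}))\cdot\grad(u^{\Delta t}-u^{\delta})$, whose sign is indefinite; working instead in $H^{-1}$ replaces it by $\int(\phi(u^{\Delta t})-\phi(u^{\delta}))(u^{\Delta t}-u^{\delta})$, which is nonnegative by monotonicity of $\phi$. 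Concretely, I would fix two step sizes $\Delta t$ and $\delta$, subtract the two identities \eqref{variational_formula_discrete_1} at a common time $t$ (legitimate, since each holds for a.e.\ $t$ and $P$-a.s.\ as an equality in $H^{-1}(\R^d)$), and obtain a pathwise evolution equation for $R:=(\tilde u^{\Delta t}-\tilde B^{\Delta t})-(\tilde u^{\delta}-\tilde B^{\delta})$, whose right-hand side is the divergence of $\grad[\phi(u^{\Delta t})-\phi(u^{\delta})]+\eps\,\grad[u^{\Delta t}-u^{\delta}]+[f(u^{\Delta t})-f(u^{\delta})]$. Note that $\tilde B^{\Delta t}$ lies in $L^2(\R^d)\subset H^{-1}(\R^d)$, so its lack of spatial $H^1$-regularity is harmless in this framework.

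Next I would test this equation (pathwise, for a.e.\ $t$) against the Riesz representative $(I-\Delta)^{-1}R$ and integrate over $(0,\tau)\times\R^d$. The time-derivative term yields $\tfrac12\tfrac{d}{dt}\|R\|_{H^{-1}}^2$; the $\phi$-contribution reduces, up to lower-order and interpolation terms, to $-\int(\phi(u^{\Delta t})-\phi(u^{\delta}))(u^{\Delta t}-u^{\delta})\le 0$, a favorable sign; and the viscosity produces the coercive term $-\eps\|u^{\Delta t}-u^{\delta}\|_{L^2}^2$, which delivers precisely the $L^2(\Pi_T)$ control we are after. The flux term is bounded by $c_f\|u^{\Delta t}-u^{\delta}\|_{L^2}\|R\|_{H^{-1}}$ and split by Young's inequality, absorbing half of the viscous term and feeding $\|R\|_{H^{-1}}^2$ into a Gronwall argument. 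The discrepancies between $u^{\Delta t}$ and $\tilde u^{\Delta t}$, and between $\tilde B^{\Delta t}$ and the genuine stochastic integral, cost only $O(\sqrt{\Delta t}+\sqrt{\delta})$ by Propositions \ref{Prop: a-priori_bound_1} and \ref{Prop: a-priori_bound_2}. Finally, taking expectation kills the martingale parts, the remaining noise contribution is controlled through the It\^{o}--L\'{e}vy isometry and the Lipschitz bound \ref{A3} (the factor $h_1\in L^2(E,m)$ and the contraction $\lambda^{*}<1$ keeping it subordinate), and Gronwall yields $\E\big[\|u^{\Delta t}-u^{\delta}\|_{L^2(\Pi_T)}^2\big]\le C(\sqrt{\Delta t}+\sqrt{\delta})\to 0$, which is the claimed Cauchy property.

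The step I expect to be most delicate is the simultaneous bookkeeping of the three interpolants $u^{\Delta t}$, $\tilde u^{\Delta t}$ and $\tilde B^{\Delta t}$ together with the noise. The monotone and coercive terms are natural for the piecewise-constant $u^{\Delta t}-u^{\delta}$, whereas the clean time derivative is available only for the piecewise-linear $R$, so the various $O(\sqrt{\Delta t})$ gradient and $L^2$ mismatches must be estimated without spoiling the sign of the coercive term $\eps\|u^{\Delta t}-u^{\delta}\|_{L^2}^2$. Moreover, the scheme evaluates the noise at the shifted argument $u^{\Delta t}(\cdot-\Delta t)$, so the last assertion of Proposition \ref{Prop: a-priori_bound_1} is needed to replace it by $u^{\Delta t}$ before \ref{A3} and the isometry can be invoked; ensuring that the resulting stochastic term is genuinely dominated, rather than competing with the $\eps$-coercivity, is the crux of the argument.
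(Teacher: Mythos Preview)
Your proposal is correct and follows essentially the same route as the paper: subtract the two discrete variational identities, set $R=(\tilde u^{\Delta t}-\tilde B^{\Delta t})-(\tilde u^{\delta}-\tilde B^{\delta})$, test against $(I-\Delta)^{-1}R$ to work in the $H^{-1}$ duality, exploit the monotonicity of $\phi$ and the $\eps$-coercivity, absorb the flux by Young, control the interpolant mismatches via Propositions~\ref{Prop: a-priori_bound_1}--\ref{Prop: a-priori_bound_2}, and close with Gronwall. Two minor remarks: the contraction $\lambda^{*}<1$ plays no role here---the Lipschitz constant of $\eta$ is simply absorbed into the Gronwall constant---and the argument in fact delivers the rate $O(\Delta t+\delta)$ rather than $O(\sqrt{\Delta t}+\sqrt{\delta})$; also, after packaging the noise into $\tilde B^{\Delta t}$ the estimate is pathwise and there is no residual martingale term to ``kill'' upon taking expectation.
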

\begin{proof}
Consider two positive integers $N$ and $M$ and denote $\Delta t =\frac{T}{N}$, $\Delta s = \frac{T}{M}$. Then, for 
any $v\in H^1(\R^d)$, one gets from \eqref{variational_formula_discrete_1},
\begin{align}
& \int_{\R^d} \Bigg(\frac{\partial}{\partial t}\Big[(\tilde{u}^{\Delta t}-\tilde{B}^{\Delta t})(t) -(\tilde{u}^{\Delta s}-\tilde{B}^{\Delta s})(t)\Big]\, v + 
  \Big\{ \grad \Big(\phi(u^{\Delta t}(t)) -\phi(u^{\Delta s}(t))\Big) \notag \\
  &+ \eps \grad \Big(u^{\Delta t}(t)-u^{\Delta s}(t)\Big)+ \Big( f(u^{\Delta t}(t))- f(u^{\Delta s}(t))\Big)\Big\}\cdot \grad v
  \Bigg)\,dx =0.\label{variational_formula_discrete_2}
\end{align}
Let $w=(\tilde{u}^{\Delta t}-\tilde{B}^{\Delta t})(t) -(\tilde{u}^{\Delta s}-\tilde{B}^{\Delta s})(t) $. Set $v= (I-\Delta)^{-1}w$
in \eqref{variational_formula_discrete_2}. Then, one has
\begin{align}
 & \frac{1}{2} \frac{\partial}{\partial t} \big\|v(t)\big\|_{H^1(\R^d)}^2 + \int_{\R^d}  \Big(\phi(u^{\Delta t}(t)) -\phi(u^{\Delta s}(t))\Big)w\,dx 
  - \int_{\R^d}  \Big(\phi(u^{\Delta t}(t)) -\phi(u^{\Delta s}(t))\Big)v\,dx \notag \\
  & + \eps \int_{\R^d} (u^{\Delta t}-u^{\Delta s})w\,dx -  \eps \int_{\R^d} (u^{\Delta t}-u^{\Delta s})v\,dx
  +  \int_{\R^d}\Big( f(u^{\Delta t})- f(u^{\Delta s})\Big)\cdot \grad v \,dx =0. \label{variational_formula_discrete_3}
\end{align}
Note that, $w= (u^{\Delta t}-u^{\Delta s}) - (\tilde{B}^{\Delta t}-\tilde{B}^{\Delta s})- (u^{\Delta t}-\tilde{u}^{\Delta t})
 +  (u^{\Delta s}-\tilde{u}^{\Delta s}) $. 
 
 Therefore,
 \begin{align}
  &\int_{\R^d}  \Big(\phi(u^{\Delta t}(t)) -\phi(u^{\Delta s}(t))\Big)w\,dx \notag  \\
  =&\int_{\R^d}  \Big(\phi(u^{\Delta t}(t)) -\phi(u^{\Delta s}(t))\Big) (u^{\Delta t}-u^{\Delta s})\,dx 
   - \int_{\R^d}  \Big(\phi(u^{\Delta t}(t))- \phi(u^{\Delta s}(t))\Big)(\tilde{B}^{\Delta t}-\tilde{B}^{\Delta s}) \,dx \notag \\
  & \qquad  -\int_{\R^d}  \Big(\phi(u^{\Delta t}(t))- \phi(u^{\Delta s}(t))\Big) \Big\{(u^{\Delta t}-\tilde{u}^{\Delta t})+ 
  (\tilde{u}^{\Delta s}-u^{\Delta s} )\Big\}\,dx \notag  \\
  \ge & \int_{\R^d}  \Big(\phi(u^{\Delta t}(t)) -\phi(u^{\Delta s}(t))\Big) (u^{\Delta t}-u^{\Delta s})\,dx 
   - \frac{1}{2c_\phi}\int_{\R^d}  \Big(\phi(u^{\Delta t}(t))- \phi(u^{\Delta s}(t))\Big)^2 \,dx \notag \\
   & \qquad -\frac{c_\phi}{2} \int_{\R^d} \Big\{ (\tilde{B}^{\Delta t}-\tilde{B}^{\Delta s}) +(u^{\Delta t}-\tilde{u}^{\Delta t})+
   (\tilde{u}^{\Delta s}-u^{\Delta s} )\Big\}^2\,dx \notag \\
   \ge & \frac{1}{2}\int_{\R^d}  \Big(\phi(u^{\Delta t}(t)) -\phi(u^{\Delta s}(t))\Big) (u^{\Delta t}-u^{\Delta s})\,dx \notag \\
   & \qquad -\frac{c_\phi}{2} \int_{\R^d} \Big\{ (\tilde{B}^{\Delta t}-\tilde{B}^{\Delta s}) +(u^{\Delta t}-\tilde{u}^{\Delta t})+
   (\tilde{u}^{\Delta s}-u^{\Delta s} )\Big\}^2\,dx \notag \\  
   \ge & -\frac{c_\phi}{2} \int_{\R^d} \Big\{ (\tilde{B}^{\Delta t}-\tilde{B}^{\Delta s}) +(u^{\Delta t}-\tilde{u}^{\Delta t})+ 
   (\tilde{u}^{\Delta s}-u^{\Delta s} )\Big\}^2\,dx\quad (\text{by}\,eq\ref{A1}).\label{variational_formula_discrete_estimate-1}
 \end{align}
 
 Similarly, we also have
 \begin{align}
  &\eps \int_{\R^d} (u^{\Delta t}-u^{\Delta s})w\,dx  \notag \\
  \ge &  \frac{\eps}{2} \int_{\R^d} (u^{\Delta t}-u^{\Delta s})^2\,dx 
  -\frac{\eps}{2} \int_{\R^d} \Big\{ (\tilde{B}^{\Delta t}-\tilde{B}^{\Delta s}) +(u^{\Delta t}-\tilde{u}^{\Delta t})+ 
  (\tilde{u}^{\Delta s}-u^{\Delta s} )\Big\}^2\,dx.\label{variational_formula_discrete_estimate-2}
 \end{align}
 Combining  \eqref{variational_formula_discrete_3}, \eqref{variational_formula_discrete_estimate-1} and \eqref{variational_formula_discrete_estimate-2} in 
 
 \begin{align}
   & \frac{1}{2} \frac{\partial}{\partial t} ||v(t)||_{H^1(\R^d)}^2  +  \frac{\eps}{2} \int_{\R^d} (u^{\Delta t}-u^{\Delta s})^2\,dx  \notag \\
   \le & \frac{(c_\phi+\eps)}{2}  \int_{\R^d} \Big\{ (\tilde{B}^{\Delta t}-\tilde{B}^{\Delta s}) +(u^{\Delta t}-\tilde{u}^{\Delta t})+
   (\tilde{u}^{\Delta s}-u^{\Delta s} )\Big\}^2\,dx\notag \\
   & +  \eps \int_{\R^d} (u^{\Delta t}-u^{\Delta s})v\,dx
  -  \int_{\R^d}\Big( F(u^{\Delta t})- F(u^{\Delta s})\Big)\cdot \grad v \,dx + \int_{\R^d}  \Big(\phi(u^{\Delta t}(t))
  -\phi(u^{\Delta s}(t))\Big)v\,dx \notag \\
  \le & \frac{(c_\phi+\eps)}{2}  \int_{\R^d} \Big\{ (\tilde{B}^{\Delta t}-\tilde{B}^{\Delta s}) +(u^{\Delta t}-\tilde{u}^{\Delta t})+
  (\tilde{u}^{\Delta s}-u^{\Delta s} )\Big\}^2\,dx  + \frac{\beta}{4} \int_{\R^d} |\grad v|^2 \,dx \notag \\
    +& \frac{ \eps}{2\alpha} \int_{\R^d} (u^{\Delta t}-u^{\Delta s})^2 dx +  \Big(\frac{\beta}{4} + \eps\,\frac{ \alpha}{2}\Big)\int_{\R^d} v^2\,dx
   + \frac{(c_f)^2 + (c_\phi)^2}{\beta} \int_{\R^d}(u^{\Delta t}- u^{\Delta s})^2\,dx \notag
 \end{align}
 for some $\alpha$ and $\beta >0$. Since $\alpha , \beta >0$ are arbitrary, there exist  positive constants $C_1, C_2$ and $C_3$ 
 such that
 \begin{align}
  &\E \Big[||v(t)||_{H^1(\R^d)}^2\Big] - C_1 \int_{0}^t  \E \Big[||v(r)||_{H^1(\R^d)}^2\Big]\,dr 
  + C_2 \int_0^t \E \Big[\big\|u^{\Delta t}-u^{\Delta s}\big\|_{L^2(\R^d)}^2\Big]\,dr \notag \\
  &  \le  C_3 \Bigg\{ \big\|u^{\Delta t}-\tilde{u}^{\Delta t}\big\|_{L^2(\Omega \times \Pi_T)}^2 + 
  \big\|u^{\Delta s}-\tilde{u}^{\Delta s}\big\|_{L^2(\Omega \times \Pi_T)}^2
  + \int_0^t \E \Big[\big\|\tilde{B}^{\Delta t}-\tilde{B}^{\Delta s}\big\|_{L^2(\R^d)}^2\Big]\,dr \Bigg\}.\label{variational_formula_discrete_4}
 \end{align}
 In view of the Proposition \ref{Prop: a-priori_bound_1}, we notice that
 \begin{align}
  \big\|u^{\Delta t}-\tilde{u}^{\Delta t}\big\|_{L^2(\Omega \times \Pi_T)}^2 + \big\|u^{\Delta s}-\tilde{u}^{\Delta s}\big\|_{L^2(\Omega 
  \times \Pi_T)}^2 \le C\big(\Delta t + \Delta s\big).\label{variational_formula_discrete_estimate-3}
 \end{align}
 So we need to estimate the term $ \int_0^t \E \Big[\big\|\tilde{B}^{\Delta t}-\tilde{B}^{\Delta s}\big\|_{L^2(\R^d)}^2\Big]\,dr $. Now
 \begin{align*}
  & \E \Big[\big\|\tilde{B}^{\Delta t}(r)-\tilde{B}^{\Delta s}(r)\big\|_{L^2(\R^d)}^2\Big] \notag \\
  \le & 3 \Big\| \tilde{B}^{\Delta t}(r) - \int_{0}^r \int_{E}  \eta(x,u^{\Delta t}(\sigma-\Delta t);z) \tilde{N}(dz,d\sigma)\Big\|_{L^2(\Omega \times \R^d)}^2 \notag \\
  & +  3\,\Big\| \tilde{B}^{\Delta s}(r) - \int_{0}^r \int_{E}  \eta(x,u^{\Delta s}(\sigma-\Delta s);z) \tilde{N}(dz,d\sigma)\Big\|_{L^2(\Omega \times \R^d)}^2 \notag \\
  & + 3\, \Big\|\int_{0}^r \int_{E}   \Big(\eta(x,u^{\Delta t}(\sigma-\Delta t);z)-\eta(x,u^{\Delta s}(\sigma-\Delta s);z)\Big)
  \tilde{N}(dz,d\sigma)\Big\|_{L^2(\Omega \times \R^d)}^2 \notag \\
  \le & C(\Delta t + \Delta s) + C \int_{0}^r \E \Big[ \big\|u^{\Delta t}(\sigma-\Delta t)-u^{\Delta s}
  (\sigma-\Delta s)\big\|_{L^2(\R^d)}^2\Big]\,d\sigma,
 \end{align*}
 where we have used Proposition \ref{Prop: a-priori_bound_2} and the assumption \ref{A5}. Thus, we get 
 \begin{align}
   & \int_0^t \E \Big[\big\|\tilde{B}^{\Delta t}-\tilde{B}^{\Delta s}\big\|_{L^2(\R^d)}^2\Big]\,dr \notag \\
   \le &  C\big(\Delta t + \Delta s\big) + C \int_0^t \int_0^r  \E \Big[ \big\|u^{\Delta t}(\sigma-\Delta t)-u^{\Delta s}
   (\sigma-\Delta s)\big\|_{L^2(\R^d)}^2\Big]\,d\sigma\,dr.\label{variational_formula_discrete_estimate-4}
 \end{align}
 We combine \eqref{variational_formula_discrete_estimate-3} and \eqref{variational_formula_discrete_estimate-4} in 
 \eqref{variational_formula_discrete_4} and have
 \begin{align*}
   &\E \Big[\big\|v(t)\big\|_{H^1(\R^d)}^2\Big] - C_1 \int_{0}^t  \E \Big[\big\|v(r)\big\|_{H^1(\R^d)}^2\Big]\,dr
   + C_2 \int_0^t \E \Big[\big\|u^{\Delta t}-u^{\Delta s}\big\|_{L^2(\R^d)}^2\Big]\,dr \notag \\
    & \le  C\big(\Delta t + \Delta s\big) +  C \int_0^t \int_0^r  \E \Big[ \big\|u^{\Delta t}(\sigma-\Delta t)-u^{\Delta s}
   (\sigma-\Delta s)\big\|_{L^2(\R^d)}^2\Big]\,d\sigma\,dr \notag \\
   & (\text{by Proposition \ref{Prop: a-priori_bound_1}}) \notag \\
   & \le C\big(\Delta t + \Delta s\big) +  C \int_0^t \int_0^r \E\Big[ \big\|u^{\Delta t}-u^{\Delta s}\big\|_{L^2(\R^d)}^2\Big]\,d\sigma\,dr.
 \end{align*}
 Hence, an application of the Gronwall's lemma gives
 \begin{align*}
   \E \Big[\big\|v(t)\big\|_{H^1(\R^d)}^2\Big]  +  \int_0^t \E\Big[\big\|u^{\Delta t}-u^{\Delta s}\big\|_{L^2(\R^d)}^2\Big]\,dr 
   \le & C\big(\Delta t + \Delta s\big)\, e^{Ct}.
 \end{align*}
 This implies that 
 \begin{align*}
 \big\|u^{\Delta t}-u^{\Delta s}\big\|_{L^2(\Omega \times \Pi_T)}^2 \le C\big(\Delta t + \Delta s\big)\, e^{CT}
 \end{align*}
 \textit{i.e.}, $\{u^{\Delta t}\}$ is a Cauchy sequence in $L^2(\Omega \times \Pi_T)$.
\end{proof}
\vspace{.2cm}

We are now in a position to identify the weak limits $\phi_{u}$ and $f_u$. We have shown that $u^{\Delta t} \rightharpoonup u$
and $u^{\Delta t}$ is a Cauchy sequence in $L^2(\Omega \times \Pi_T)$. Thanks to the Lipschitz continuity of $\phi$ and $f$, one can
easily conclude that $\phi_u = \phi(u)$ and $f_u = f(u)$.
\vspace{.2cm}
 
 In view of the variational formula \eqref{variational_formula_discrete_1}, one needs to show the boundedness of
 $\frac{\partial}{\partial t}(\tilde{u}^{\Delta t}-\tilde{B}^{\Delta t}) $ in 
$ L^2(\Omega\times (0,T);H^{-1}(\R^d))$ and then  identify  the weak limit. Regarding this, we have the following lemma.

\begin{lem}\label{lemma:weak-stochastic-term}
The sequence $\Big\{\frac{\partial}{\partial t}(\tilde{u}^{\Delta t}-\tilde{B}^{\Delta t})(t)\Big\}$ is  bounded in $L^2\big(\Omega\times (0,T);H^{-1}
 (\R^d)\big)$, and 
 \begin{align*}
 \frac{\partial}{\partial t}(\tilde{u}^{\Delta t}-\tilde{B}^{\Delta t}) \rightharpoonup
 \frac{\partial}{\partial t}\Big( u-\int_{0}^\cdot \int_{E} \eta(x,u;z)\tilde{N}(dz,ds)\Big)
 \quad \text{in}~~L^2\big(\Omega\times (0,T);H^{-1}(\R^d)\big)
 \end{align*}
 where $u$ is given by \eqref{convergence:weak-1}. 
\end{lem}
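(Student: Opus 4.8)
The plan is to read off the distributional time derivative directly from the variational identity \eqref{variational_formula_discrete_1}, bound it in $H^{-1}(\R^d)$ uniformly in $\Delta t$, and then identify the weak limit by passing to the limit in the primitive. Testing \eqref{variational_formula_discrete_1} against $v\in H^1(\R^d)$ and moving the flux terms to the right gives, for a.e.\ $t$ and $P$-a.s.,
\begin{align*}
\Big\langle \frac{\partial}{\partial t}(\tilde{u}^{\Delta t}-\tilde{B}^{\Delta t})(t), v\Big\rangle = -\int_{\R^d}\Big\{\grad\phi(u^{\Delta t}(t)) + \eps\,\grad u^{\Delta t}(t) + f(u^{\Delta t}(t))\Big\}\cdot\grad v\,dx,
\end{align*}
so the $H^{-1}(\R^d)$-norm of $\partial_t(\tilde{u}^{\Delta t}-\tilde{B}^{\Delta t})(t)$ is controlled by $\|\grad\phi(u^{\Delta t}(t))\|_{L^2}+\eps\|\grad u^{\Delta t}(t)\|_{L^2}+\|f(u^{\Delta t}(t))\|_{L^2}$. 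Squaring, integrating over $(0,T)\times\Omega$, and invoking Proposition \ref{Prop: a-priori_bound_1} (which bounds $\phi(u^{\Delta t})$ and $\sqrt{\eps}\,u^{\Delta t}$ in $L^2(0,T;\mathcal{N})$), together with the Lipschitz bound $|f(u^{\Delta t})|\le c_f|u^{\Delta t}|$ and the $L^\infty(0,T;\mathcal{H})$-bound on $u^{\Delta t}$, yields the claimed boundedness in $L^2(\Omega\times(0,T);H^{-1}(\R^d))$. By reflexivity a subsequence converges weakly.

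Next I would show the primitives converge strongly, namely $\tilde{u}^{\Delta t}-\tilde{B}^{\Delta t}\to u-\int_0^\cdot\int_E\eta(x,u;z)\tilde{N}(dz,ds)$ in $L^2(\Omega\times\Pi_T)$. For the first summand, Lemma \ref{lemma:cauchyness} gives $u^{\Delta t}\to u$ strongly, and since $\|u^{\Delta t}-\tilde{u}^{\Delta t}\|_{L^2(\Omega\times\Pi_T)}^2\le C\Delta t$ by Proposition \ref{Prop: a-priori_bound_1}, also $\tilde{u}^{\Delta t}\to u$. For the stochastic summand I would split
\begin{align*}
\tilde{B}^{\Delta t} - \int_0^\cdot\!\int_E\eta(x,u;z)\tilde{N}(dz,ds) &= \Big(\tilde{B}^{\Delta t} - \int_0^\cdot\!\int_E\eta(x,u^{\Delta t}(s-\Delta t);z)\tilde{N}(dz,ds)\Big) \\
&\quad + \int_0^\cdot\!\int_E\big(\eta(x,u^{\Delta t}(s-\Delta t);z)-\eta(x,u;z)\big)\tilde{N}(dz,ds),
\end{align*}
bounding the first bracket by $C\Delta t$ via Proposition \ref{Prop: a-priori_bound_2}. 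For the second, the It\^{o}--L\'{e}vy isometry together with the $u$-Lipschitz bound in \ref{A3} (taking $x=y$) reduces the estimate to $(\lambda^*)^2\|h_1\|_{L^2(E,m)}^2\,\|u^{\Delta t}(\cdot-\Delta t)-u\|_{L^2(\Omega\times\Pi_T)}^2$. Since $u^{\Delta t}\to u$ and $u^{\Delta t}-u^{\Delta t}(\cdot-\Delta t)\to0$ in $L^2(\Omega\times\Pi_T)$ (Proposition \ref{Prop: a-priori_bound_1}), the shifted family $u^{\Delta t}(\cdot-\Delta t)$ also converges to $u$, so this term vanishes; note $u$ is predictable as a weak $L^2(\Omega\times\Pi_T)$-limit of predictable processes, which makes $\eta(x,u;z)$ an admissible integrand.

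Finally, I would identify the weak limit using the closedness of the distributional time derivative. If a subsequence of $\partial_t(\tilde{u}^{\Delta t}-\tilde{B}^{\Delta t})$ converges weakly to some $\chi$ in $L^2(\Omega\times(0,T);H^{-1}(\R^d))$, then testing against $\varphi(t)v$ with $\varphi\in C_c^\infty(0,T)$ and $v\in H^1(\R^d)$ and using the relation $\int_0^T\langle\partial_t w^{\Delta t},v\rangle\varphi\,dt=-\int_0^T\langle w^{\Delta t},v\rangle\varphi'\,dt$ allows passage to the limit (weak convergence of the derivatives on the left, strong convergence of the primitive $w^{\Delta t}=\tilde{u}^{\Delta t}-\tilde{B}^{\Delta t}$ on the right), forcing $\chi=\partial_t\big(u-\int_0^\cdot\int_E\eta(x,u;z)\tilde{N}(dz,ds)\big)$. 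As this limit is independent of the subsequence, the whole family converges weakly. The main obstacle is the strong convergence of the stochastic primitive $\tilde{B}^{\Delta t}$: reconciling the piecewise-linear interpolation, the time shift $s\mapsto s-\Delta t$ in the integrand, and the discrete construction requires exactly the three-term split above and a careful use of the isometry, whereas the boundedness and closedness steps are comparatively routine.
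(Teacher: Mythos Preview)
Your proposal is correct and follows essentially the same approach as the paper: bound the $H^{-1}$-norm via the variational formula \eqref{variational_formula_discrete_1}, establish strong convergence of $\tilde{B}^{\Delta t}$ through the same split (Proposition \ref{Prop: a-priori_bound_2} plus the It\^{o}--L\'{e}vy isometry with the Lipschitz bound on $\eta$), and then identify the weak limit via the primitives. Your closedness argument for the distributional derivative is spelled out more carefully than in the paper, and your remark that uniqueness of the limit upgrades subsequential to full convergence is a nice addition the paper leaves implicit.
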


\begin{proof}
To prove the lemma, let $\Gamma = \Omega\times [0,T] \times E $, $\mathcal{G}= \mathcal{P}_T\times \mathcal{L}(E)$  and
 $\varsigma = P\otimes \ell_t \otimes m$, where $\mathcal{P}_T$ represents predictable $\sigma$-algebra on $\Omega \times [0,T]$
  and $\mathcal{L}(E)$ represents a Lebesgue $\sigma$- algebra on $E$. Then $L^2\big((\Gamma, \mathcal{G}, \varsigma); \R\big)$ consists of all
 square integrable predictable processes which are Borel measurable functions of $z$-variable.
 

 
The space   $L^2\big((\Gamma, \mathcal{G}, \varsigma); \R\big)$ represents the space of square integrable predictable integrands
for It\^{o}-L\'{e}vy integrals with respect to the compensated compound  Poisson random measure $\tilde{N}(\,dz, \, dt)$. 
Moreover, It\^{o}-L\'{e}vy integral defines a linear operator from  $L^2\big((\Gamma, \mathcal{G}, \varsigma); \R\big)$ to 
$L^2\big((\Omega, \mathcal{F}_T); \R\big)$ and it preserves the norm (cf. for example \cite{peszat}). 
\vspace{.1cm}
\\
Thank to Propositions \ref{Prop: a-priori_bound_1} and \ref{lemma:cauchyness},  $u^{\Delta t}(t-\Delta t)$ converges to $u$ in $L^2(\Omega \times \Pi_T)$. 
Therefore, in view of the Proposition \ref{Prop: a-priori_bound_2}, Lipschitz property of $\eta$, and the above discussion  we conclude that
 \begin{align*} 
  \tilde{B}^{\Delta t} \to \int_{0}^\cdot \int_{E} \eta(x,u;z)\tilde{N}(dz,ds) \quad \text{in}\,~~L^2(\Omega \times \Pi_T).
 \end{align*}
 

Again, note that
 \begin{align*}
 \frac{\partial}{\partial t}\big(\tilde{u}^{\Delta t}-\tilde{B}^{\Delta t}\big)(t)
 = \sum_{k=1}^N \frac{ (u_k -u_{k-1}) - (B_k -B_{k-1})}{\Delta t}{\bf 1}_{\big[(k-1)\Delta t, k\Delta t\big)}.
 \end{align*}
 From \eqref{variational_formula_discrete}, we see that for any $v\in H^1(\R^d)$,
 \begin{align*}
 & \int_{\R^d} \Big(\frac{u_{n+1}-u_n}{\Delta t} -\frac{1}{\Delta t} \int_{n\Delta t}^{(n+1)\Delta t} \int_{E} \eta(x,u_n;z)
  \tilde{N}(dz,ds)\Big)v\,dx \notag \\
  &= -\int_{\R^d} \grad \phi(u_{n+1})\cdot \grad v\,dx -\eps \int_{\R^d} \grad u_{n+1}\cdot \grad v\,dx -\int_{\R^d} F(u_{n+1})\cdot \grad v\,dx \notag \\
  &\le  \Big\{ \big\| \grad \phi(u_{n+1})\big\|_{L^2(\R^d)} + \eps \big\|\grad u_{n+1}\big\|_{L^2(\R^d)} +
  c_f \big\|u_{n+1}\big\|_{L^2(\R^d)}\Big\} ||v||_{H^1(\R^d)},
 \end{align*} 
 and hence 
 \begin{align*}
  & \sup_{v\in H^1(\R^d)\setminus \{0\}} \frac{\int_{\R^d} \Big( \frac{u_{n+1}-u_n}{\Delta t} -\frac{1}{\Delta t} \int_{n\Delta t}^{(n+1)\Delta t} \int_{E} \eta(x,u_n;z)
  \tilde{N}(dz,ds)\Big)v\,dx }{||v||_{H^1(\R^d)}} \notag \\
   & \hspace{2.5cm} \le \big\| \grad \phi(u_{n+1})\big\|_{L^2(\R^d)} + \eps \big\|\grad u_{n+1}\big\|_{L^2(\R^d)}
   + c_f \big\|u_{n+1}\big\|_{L^2(\R^d)}.
 \end{align*}
 This implies that $\frac{\partial}{\partial t}(\tilde{u}^{\Delta t}-\tilde{B}^{\Delta t})(t)$ is a bounded
 sequence in $L^2(\Omega\times (0,T);H^{-1}(\R^d))$.
 \vspace{.1cm}
 
 To prove the second part of the lemma, we recall that
  $ \tilde{B}^{\Delta t} \rightharpoonup \int_{0}^\cdot \int_{E} \eta(x,u;z)\tilde{N}(dz,ds)
  $  and $\tilde{u}^{\Delta t}  \rightharpoonup u $ in $L^2(\Omega \times \Pi_T)$. In view of the first part of this lemma, 
   one can conclude that, up to a subsequence 
 \begin{align*}
 \frac{\partial}{\partial t}(\tilde{u}^{\Delta t}-\tilde{B}^{\Delta t}) \rightharpoonup
 \frac{\partial}{\partial t}\Big( u-\int_{0}^\cdot \int_{E} \eta(x,u;z)\tilde{N}(dz,ds)\Big) \quad \text{in}~~L^2(\Omega\times (0,T);H^{-1}(\R^d)).
 \end{align*}
 This completes the proof.
 \end{proof}
 
 \subsubsection{ \bf Existence of weak solution}\label{Existence of weak solution}
 As we have emphasized, our aim is to prove the existence of weak solution for viscous problem. For this, it is required to pass the 
 limit as $\Delta t \goto 0$. To this end, let $\alpha \in L^2((0,T))$ and
 $\beta \in L^2(\Omega)$. Then, in view of variational formula \eqref{variational_formula_discrete_1}, we obtain
 \begin{align*}
   & \int_{\Omega \times (0,T)} \Big \langle \frac{\partial}{\partial t}(\tilde{u}^{\Delta t}-\tilde{B}^{\Delta t}),v \Big \rangle
    \alpha\, \beta \,dt\,dP
  + \eps  \int_{\Omega \times \Pi_T} \grad u^{\Delta t}\cdot \grad v\,\alpha \beta \, dx\,dt\,dP  \notag \\
  & \quad +   \int_{\Omega \times \Pi_T} \grad \phi( u^{\Delta t})\cdot \grad v\,\alpha \beta \, dx\,dt\,dP 
  +  \int_{\Omega \times \Pi_T} f( u^{\Delta t})\cdot \grad v\,\alpha \beta \, dx\,dt\,dP =0.
 \end{align*}
 We make use of \eqref{convergence:weak-1}, Lemmas \ref{lemma:cauchyness} and \ref{lemma:weak-stochastic-term} to pass 
  to the limit as $\Delta t \goto 0$ in the above variational formulation and arrive at
 \begin{align}
   & \int_{\Omega \times (0,T)} \Big \langle  \frac{\partial}{\partial t}\Big( u-\int_{0}^{t} \int_{E} 
   \eta(x,u;z)\tilde{N}(dz,ds)\Big),v \Big \rangle \alpha\, \beta \,dt\,dP + \eps  \int_{\Omega \times \Pi_T} \grad u\cdot \grad v\,\alpha \beta \, dx\,dt\,dP \notag \\
   & \hspace{3cm} + \int_{\Omega \times \Pi_T}\Big\{ \grad \phi( u)\cdot \grad v + f( u)\cdot \grad v\Big\} \alpha \beta \,dx\,dt\,dP =0.\label{weak-formulation-final}
 \end{align}
 Since $H^1(\R^d)$ is a separable Hilbert space, the above formulation \eqref{weak-formulation-final} yields for almost surely
 $ \omega \in \Omega$,
 \begin{align*}
\Big \langle  \frac{\partial}{\partial t}\Big( u-\int_{0}^t \int_{E} \eta(x,u;z)\tilde{N}(dz,ds)\Big),v \Big\rangle 
+ \int_{\R^d} \Big(\eps \, \grad u + \grad \phi(u) + f( u)\Big) \cdot \grad v \,dx=0,
\end{align*} 
 for almost every $t \in [0,T]$. 

 
 This proves that $u$ is a weak solution of \eqref{eq:levy_stochconservation_laws-viscous}.
 Note that, for every $\phi\in H^1(\R^d)$, it is easily seen that
 \begin{align*}
 \E  \big|  \frac{1}{\delta}\int_0^\delta\int_{\R^d} \big(u(s,x)-u_0(x)\big)\psi(x)\,dx\,ds \big| \le C\Delta t\quad \text{if}\quad \delta <\Delta t.
 \end{align*} Therefore, 
 \begin{align*}
 \limsup_{\delta\downarrow 0} \E \big|  \frac{1}{\delta}\int_0^\delta\int_{\R^d} \big(u(s,x)-u_0(x)\big)\psi(x)\,dx\,ds \big| \le C\Delta t\quad \text{for very }\quad \delta <\Delta t. \end{align*} Now, by letting $\Delta t\downarrow 0$, we have 
 \begin{align}
 \label{eq:initial-con-weak} \lim_{\delta\downarrow 0}  \frac{1}{\delta}\int_0^\delta\int_{\R^d} u(s,x)\psi(x)\,dx\,ds= \int_{\R^d}u_0(x) \psi(x)\,dx \quad P-\text{almost surely}.
  \end{align} 
 
 \subsection{\textit{A priori} bounds for viscous solutions}
 Note that for fixed $\eps>0$, there exists a weak solution, denoted as $u_\eps \in H^1(\R^d)$, which satisfies 
 the following variational formulation: $P$- almost surely in $\Omega$, and almost every $t\in (0,T)$,
\begin{align*}
& \Big\langle \frac{\partial}{\partial t} [u_\eps-\int_0^t \int_{E} \eta(\cdot,u_\eps(s,\cdot);z) \tilde{N}(dz,ds)],v \Big\rangle
+ \int_{\R^d} \grad \phi(u_\eps(t,x))\cdot \grad v\,dx \\
& \hspace{3cm} + \int_{\R^d}\Big\{ f(u_\eps(t,x)) + \eps \grad u_\eps (t,x)\Big\} \cdot\grad v\,dx=0,
\end{align*}
for any $v\in H^1(\R^d)$.
 Let $\beta(u)=u^2$. Applying It\^{o}-L\'{e}vy  formula (cf. Theorem \ref{thm:weak-its} in the Appendix) to $\beta(u)$, one gets that for any $t>0$
\begin{align*}
  & \int_{\R^d} u_\eps^2(t)\,dx + 2 \int_0^t \int_{\R^d} \big[\eps + \phi^\prime(u_\eps(s))\big] |\grad u_\eps|^2\,ds + 
  2\int_{\R^d} f(u_\eps)\cdot \grad u_\eps\,dx \notag \\
 &=  \int_{\R^d} u_\eps^2(0)\,dx +  2\,\int_0^t \int_{E}\int_{\R^d} \int_0^1 \eta(x,u_\eps(s,x);z) \big( u_\eps+\theta\,
 \eta(x,u_\eps;z)\big) \,d\theta\,dx\,\tilde{N}(dz,ds) \notag \\
  & \qquad \quad + \int_0^t \int_{E}\int_{\R^d}   \eta^2(x,u_\eps(s,x);z) \,dx\,m(dz)\,ds.
\end{align*}
Note that $\int_{\R^d} f(u_\eps)\cdot \grad u_\eps\,dx =0$ as $u_\eps \in H^1(\R^d)$. Taking expectation, we obtain
\begin{align*}
  & \E\Big[ \big\|u_\eps(t)\big\|_2^2\Big] + \eps \int_0^t \E\Big[\big\|\grad u_\eps\big\|_2^2\Big]\,ds  
  + \int_0^t  \E\Big[\big\|\grad G(u_\eps(s))\big\|_2^2\Big]\,ds  \notag \\
  & \quad  \le  \E\Big[ \big\|u_\eps(0)\big\|_2^2 + C t \big\|g\big\|_{L^2(\R^d)}^2 + C \int_0^t \E\Big[\big\|u_\eps(s)\big\|_2^2\Big]\,ds.
\end{align*}
An application of Gronwall's inequality yields
\begin{align*}
 \sup_{0\le t\le T}  \E\Big[\big\|u_\eps(t)\big\|_2^2\Big]  + \eps \int_0^T \E\Big[\big\|\grad u_\eps(s)\big\|_2^2\Big]\,ds
 + \int_0^T \E\Big[\big\|\grad G(u_\eps(s))\big\|_2^2\Big]\,ds \le C.
\end{align*}
\vspace{.2cm}

The achieved results can be summarized into the following theorem.
\begin{thm} \label{thm:existec-viscous-apriori}
 For any $\eps>0$, there exists a  weak solution $u_\eps$ to the problem \eqref{eq:levy_stochconservation_laws-viscous}. Moreover, it
 satisfies the following estimate:
  \begin{align}
   \sup_{0\le t\le T}  \E\Big[\big\|u_\eps(t)\big\|_2^2\Big]  + \eps \int_0^T \E\Big[\big\|\grad u_\eps(s)\big\|_2^2\Big]\,ds
 + \int_0^T \E\Big[\big\|\grad G(u_\eps(s))\big\|_2^2\Big]\,ds \le C,\label{bounds:a-priori-viscous-solution}
  \end{align} 
  where $G$ is an associated Kirchoff's function of $\phi$, defined by $G(x)= \int_0^x \sqrt{\phi^\prime(r)}\,dr$.
 \end{thm}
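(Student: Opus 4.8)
The plan is to establish the two assertions of the theorem separately: existence of a weak solution for fixed $\eps>0$, and the energy-type a priori bound \eqref{bounds:a-priori-viscous-solution}. For existence I would discretize the viscous problem \eqref{eq:levy_stochconservation_laws-viscous} in time by an implicit Euler scheme. Given the solvability and measurability of a single step from Proposition \ref{prop:time discretization}, which itself rests on the deterministic Lemma \ref{lem:deterministic}, this produces an adapted sequence $(u_n)_n$ together with the piecewise-constant and piecewise-affine interpolants $u^{\Delta t},\tilde u^{\Delta t}$ and the discrete stochastic-integral interpolant $\tilde B^{\Delta t}$. First I would collect the uniform bounds of Propositions \ref{Prop: a-priori_bound_1} and \ref{Prop: a-priori_bound_2}, obtained by testing \eqref{variational_formula_discrete} against $v=u_{n+1}$, invoking the Kirchhoff identity and the It\^o--L\'evy isometry, and summing with the discrete Gronwall lemma. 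These yield the weak(-$*$) limits recorded in \eqref{convergence:weak-1}.

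The crux of the existence part, and the step I expect to be the main obstacle, is the identification of those weak limits. The nonlinear terms $\phi(u^{\Delta t})$ and $f(u^{\Delta t})$ cannot be passed to the limit from weak convergence alone, so I would first upgrade to strong convergence: Lemma \ref{lemma:cauchyness} shows that $\{u^{\Delta t}\}$ is Cauchy in $L^2(\Omega\times\Pi_T)$ by testing the difference of two discrete formulations against $v=(I-\Delta)^{-1}w$ and running a Gronwall argument. Strong convergence then forces $\phi_u=\phi(u)$ and $f_u=f(u)$ by Lipschitz continuity, and it also identifies $\tilde B^{\Delta t}\to \int_0^\cdot\int_E \eta(x,u;z)\tilde N(dz,ds)$ via the norm-preserving property of the It\^o--L\'evy integral together with the Lipschitz bound \ref{A3}. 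With the time-derivative term handled by Lemma \ref{lemma:weak-stochastic-term}, I would pass to the limit in \eqref{variational_formula_discrete_1} tested against a product of a deterministic factor in $L^2(0,T)$, a random factor in $L^2(\Omega)$, and $v\in H^1(\R^d)$, and then conclude by separability that the limit $u=u_\eps$ solves the viscous variational formulation; weak attainment of the initial datum follows from \eqref{eq:initial-con-weak}.

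For the a priori estimate I would apply the weak It\^o--L\'evy formula (Theorem \ref{thm:weak-its}) to the energy function $\beta(r)=r^2$ evaluated at $u_\eps$. The deterministic dissipation produces the two good terms $\eps\int|\grad u_\eps|^2$ and, through the Kirchhoff identity $\phi^\prime(u_\eps)|\grad u_\eps|^2=|\grad G(u_\eps)|^2$, the term $\int|\grad G(u_\eps)|^2$, while the flux contribution vanishes because $\int_{\R^d} f(u_\eps)\cdot\grad u_\eps\,dx=0$ for $u_\eps\in H^1(\R^d)$. Taking expectation annihilates the compensated-martingale term, and the remaining jump-quadratic correction is controlled by assumption \ref{A5} through the bound $C\,t\,\|g\|_{L^2(\R^d)}^2 + C\int_0^t\E\big[\|u_\eps(s)\|_2^2\big]\,ds$. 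A final application of Gronwall's inequality over $[0,T]$ then delivers \eqref{bounds:a-priori-viscous-solution}. The delicate point here is that $u_\eps$ is only an $H^1$-valued weak solution, so the It\^o--L\'evy formula cannot be applied naively; its validity in this low-regularity setting is precisely what the appendix result is invoked for.
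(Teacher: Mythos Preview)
Your proposal is correct and follows essentially the same route as the paper: existence via the implicit time discretization with the Cauchy-in-$L^2$ argument (testing against $(I-\Delta)^{-1}w$) to identify the nonlinear limits, and the a priori bound via the weak It\^o--L\'evy formula applied to $\beta(r)=r^2$ followed by Gronwall. The only cosmetic difference is that the paper phrases the discrete dissipation estimate through the inequality $\|\grad\phi(u)\|_2^2\le c_\phi\int\phi'(u)|\grad u|^2$ rather than the Kirchhoff identity at that stage, but this is immaterial.
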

\begin{rem} Let us remark that since any solution to \eqref{eq:levy_stochconservation_laws-viscous} is an entropy solution,  the solution $u_\epsilon$ in unique.
\end{rem}

\section{Existence of entropy solution}\label{sec:existence-entropy}
In this section, we will prove the existence of entropy solution.  In view of the  \textit{a priori} estimates as in \eqref{bounds:a-priori-viscous-solution}, we can apply Lemmas $4.2$ and 
$4.3$ of \cite{BisMajKarl_2014}(see also \cite{BaVaWit_2012}) and show the existence of Young measure-valued limit process solution $u(t,x,\alpha), \alpha \in (0,1)$ 
associated to the sequence $\{u_\eps(t,x)\}_{\eps>0}$.

The basic strategy in this case is to apply Young measure technique and adapt Kruzkov's
doubling method in the presence of noise for viscous solutions with two different parameters and then send the viscous parameters goes to zero.
One needs a version of classical $L^1$ contraction principle( for conservation laws) to get the uniqueness of Young measure valued limit and show that Young measure valued limit 
 process is independent of the additional (dummy) variable and hence it will imply the point-wise convergence of viscous solutions.
 
\subsection{Uniqueness of Young measure valued limit process}
 To do this, we follow the same line of argument as in \cite{BaVaWit_2014} for the degenerate parabolic part and \cite{BisMajKarl_2014} for the L\'evy noise. For the convenience of the reader, we have chosen to provide detailed proofs of a few crucial technical lemmas and the rest are referred to the appropriate resources.
 In \cite{BaVaWit_2012,BisMajKarl_2014},
 the authors used the fact that $\Delta u_\eps \in L^2(\Omega \times \Pi_T)$. Note that, in this case, $u_\eps \in H^1(\R^d)$. Therefore, we need to regularize $u_\eps$ by convolution.
 Let $ \{\tau_\kappa\} $ be a sequence of mollifiers in $\R^d$. Since $u_\eps$ is a viscous solution to the problem 
 \eqref{eq:levy_stochconservation_laws-viscous}, as shown in the proof of Theorem \ref{thm:weak-its},
 $u_\eps \con \tau_\kappa$ is a solution to the problem
\begin{align}
   (u_\eps \con \tau_\kappa) -\int_0^t\Delta (\phi(u_\eps)\con \tau_\kappa )\,ds =\int_0^t& \mbox{div}_x (f(u_\eps)\con \tau_\kappa) \,ds
  +\int_0^t\int_{E} (\eta(x, u_\eps;z)\con \tau_\kappa )\tilde{N}(dz,ds) \notag \\
 & +\int_0^t \eps \Delta( u_\eps \con \tau_\kappa(t,x))\,ds \quad \text{a.e.} ~t>0, ~ x\in \R^d.
 \label{eq:levy_stochconservation_laws-viscous-regularize}
\end{align}
Note that, for fixed $\eps>0$, $ \Delta (u_\eps \con \tau_\kappa) \in L^2(\Omega \times \Pi_T)$.
\vspace{.2cm}

Let $\rho$ and $\varrho$ be the standard nonnegative mollifiers on $\R$ and $\R^d$ respectively such that
  $\supp(\rho) \subset [-1,0]$ and 
  \guy{
  $\supp(\varrho) = \overline B_1(0)$. 
  }
  We define $\rho_{\delta_0}(r) = \frac{1}{\delta_0}\rho(\frac{r}{\delta_0})$ and 
  $\varrho_{\delta}(x) = \frac{1}{\delta^d}\varrho(\frac{x}{\delta})$, where $\delta$ and $\delta_0$ are two positive constants.
  Given a nonnegative test function $\psi\in C_c^{1,2}([0,\infty)\times \rd)$ and two positive constants $\delta$ and
  $ \delta_0 $, define
 \begin{align}
   \varphi_{\delta,\delta_0}(t,x, s,y) = \rho_{\delta_0}(t-s) \varrho_{\delta}(x-y) \psi(s,y). \label{eq:doubled-variable}
 \end{align}
 Clearly $ \rho_{\delta_0}(t-s) \neq 0$ only if $s-\delta_0 \le t\le s$ and hence $\varphi_{\delta,\delta_0}(t,x; s,y)= 0$ outside
 $s-\delta_0 \le t\le s$.
 \vspace{.1cm}
 
Let $u_{\theta}(t,x)$ be a weak solution to the viscous problem \eqref{eq:levy_stochconservation_laws-viscous} with parameter
$\theta >0$ and initial condition $u_{\theta}(0,x)=v_0(x)$. Moreover, let $J$ be the standard symmetric nonnegative mollifier on $\R$
with support in $[-1,1]$ and $ J_l(r)= \frac{1}{l} J(\frac{r}{l})$ for $l > 0$. We now simply write down It\^{o}-L\'{e}vy formula
for $u_{\theta}(t,x)$ against the convex entropy triple $(\beta(\cdot-k), F^\beta(\cdot, k),\phi^\beta(\cdot,k))$ and 
then multiply by $J_l(u_\eps\con \tau_\kappa(s,y)-k)$ for $k\in \R$
and then integrate with respect to $ s, y, k$. Taking expectation to the resulting equations, we have
\begin{align}
& \theta  \E\Big[\int_{\Pi_T}\int_{\Pi_T} \int_{\R} \beta^{\prime\prime}(u_{\theta}(t,x)-k) |\grad u_{\theta}(t,x)|^2 
\varphi_{\delta,\delta_0}(t,x,s,y)  J_l(u_\eps\con \tau_\kappa(s,y)-k)\,dk\,dx\,dt\,dy\,ds\Big] \notag \\
 & + \E\Big[\int_{\Pi_T}\int_{\Pi_T} \int_{\R} \beta^{\prime \prime }(u_{\theta}(t,x)-k) |\Grad G(u_{\theta}(t,x))|^2
 \varphi_{\delta,\delta_0}(t,x,s,y)J_l(u_\eps\con \tau_\kappa(s,y)-k)\,dk\,dx\,dt\,dy\,ds\Big] \notag \\
\le  & \E\Big[\int_{\Pi_T}\int_{\R^d}\int_{\R} \beta(v_0(x)-k)\varphi_{\delta,\delta_0}(0,x,s,y) J_l(u_\eps\con \tau_\kappa(s,y)-k)\,dk \,dx\,dy\,ds\Big] \notag \\
  +& \E\Big[ \int_{\Pi_T} \int_{\Pi_T}\int_{\R} \beta(u_{\theta}(t,x)-k)\partial_t \varphi_{\delta,\delta_0}(t,x,s,y)
J_l(u_\eps\con \tau_\kappa(s,y)-k)\,dk \,dx\,dt\,dy\,ds\Big]\notag \\ 
 + &\E\Big[\int_{\Pi_T}\int_{\R} \int_{0}^T\int_{E} \int_{\R^d} \int_0^1 \eta(x,u_{\theta}(t,x);z) \beta^\prime (u_{\theta}(t,x)
 +\tau\,\eta(x,u_{\theta}(t,x);z)-k) \notag \\
& \hspace{4cm} \times \varphi_{\delta,\delta_0}(t,x,s,y) \,d\tau\,dx\,\tilde{N}(dz,dt)J_l(u_\eps\con \tau_\kappa(s,y)-k)\,dk \,dy\,ds\Big]\notag\\
 + & \E\Big[\int_{\Pi_T} \int_{0}^T\int_{E}\int_{\R^d} \int_{\R} \int_0^1 (1-\tau) \eta^2(x,u_{\theta}(t,x);z)
 \beta^{\prime\prime}(u_{\theta}(t,x) +\tau\,\eta(x,u_{\theta}(t,x);z)-k) \notag \\
&\hspace{4.5cm} \times \varphi_{\delta,\delta_0}(t,x,s,y) J_l(u_\eps\con \tau_\kappa(s,y)-k)\,d\tau\,dk\,dx\,m(dz)\,dt\,dy\,ds\Big]\notag \\
 -&\E\Big[\int_{\Pi_T}\int_{\Pi_T} \int_{\R}  F^\beta(u_{\theta}(t,x),k)\grad_x \varrho_\delta(x-y)\,\psi(s,y)\,\rho_{\delta_0}(t-s)
J_l(u_\eps\con \tau_\kappa(s,y)-k)\,dk\,dx\,dt\,dy\,ds\Big] \notag \\
+& \E\Big[\int_{\Pi_T}\int_{\Pi_T} \int_{\R}  \phi^\beta(u_{\theta}(t,x),k)\Delta_x \varrho_\delta(x-y)\,\psi(s,y)\,\rho_{\delta_0}(t-s)
J_l(u_\eps\con \tau_\kappa(s,y)-k)\,dk\,dx\,dt\,dy\,ds\Big] \notag \\
-& \theta \E\Big[\int_{\Pi_T}\int_{\Pi_T} \int_{\R} \beta^{\prime}(u_{\theta}(t,x)-k) \grad_x u_{\theta}(t,x)\cdot \grad_x \varphi_{\delta,\delta_0}(t,x,s,y)
J_l(u_\eps\con \tau_\kappa(s,y)-k)\,dk\,dx\,dt\,dy\,ds\Big] \notag \\
 & \text{ i.e.,} \qquad  I_{0,1}+ I_{0,2} \le  I_1 + I_2 + I_3 +I_4 + I_5 + I_6 + I_7.\label{stochas_entropy_1}
\end{align}
We now apply It\^{o}-L\'{e}vy formula to \eqref{eq:levy_stochconservation_laws-viscous-regularize} and obtain
\begin{align}
&\eps  \E\Big[\int_{\Pi_T}\int_{\Pi_T} \int_{\R} \beta^{\prime \prime}(u_\eps\con \tau_\kappa -k) |\grad(u_\eps\con \tau_\kappa)|^2
\varphi_{\delta,\delta_0} J_l(u_{\theta}(t,x)-k)\,dk\,dx\,dt\,dy\,ds\Big] \notag \\
+& \E\Big[\int_{\Pi_T}\int_{\Pi_T} \int_{\R} \beta^{\prime \prime}(u_\eps\con \tau_\kappa -k)
\grad( \phi(u_\eps)\con \tau_\kappa)\cdot \grad(u_\eps\con \tau_\kappa)
\varphi_{\delta,\delta_0} J_l(u_{\theta}(t,x)-k)\,dk\,dx\,dt\,dy\,ds\Big] \notag \\
  \le &  \E\Big[\int_{\Pi_T}\int_{\R^d}\int_{\R} \beta(u_\eps\con \tau_\kappa(0,y)-k)\varphi_{\delta,\delta_0}
  (t,x,0,y) J_l(u_{\theta}(t,x)-k)\,dk\,dx\,dy\,dt\Big] \notag \\
   +& \E \Big[ \int_{\Pi_T} \int_{\Pi_T} \int_{\R} \beta(u_\eps\con \tau_\kappa(s,y)-k)\partial_s \phi_{\delta,\delta_0}
 J_l(u_{\theta}(t,x)-k)\,dk \,dy\,ds\,dx\,dt\Big]\notag \\ 
  +& \E \Big[\int_{\Pi_T} \int_{0}^T\int_{E} \int_{\R} \int_{\R^d} \int_0^1  (\eta(y,u_\eps;z)\con \tau_\kappa)
 \beta^\prime(u_\eps\con \tau_\kappa +\theta (\eta(y,u_\eps;z)\con \tau_\kappa)-k)\notag \\
 & \hspace{4cm} \times \varphi_{\delta,\delta_0}J_l(u_{\theta}(t,x)-k)\,d\theta\,dy\,dk \,\tilde{N}(dz,ds)\,dx\,dt\Big]\notag\\
  +&  \E \Big[\int_{\Pi_T} \int_{0}^T\int_{E} \int_{\R} \int_{\R^d} \int_0^1 (1-\theta)  (\eta(y,u_\eps;z)\con \tau_\kappa)^2
 \beta^{\prime\prime}(u_\eps\con \tau_\kappa +\theta (\eta(y,u_\eps;z)\con \tau_\kappa)-k)\notag \\
 & \hspace{5cm} \times \varphi_{\delta,\delta_0}J_l(u_{\theta}(t,x)-k)\,d\theta\,dy\,dk\,m(dz) \,ds\,dx\,dt\Big]\notag \\
  - & \E \Big[\int_{\Pi_T}\int_{\Pi_T} \int_{\R}\beta^\prime(u_\eps\con \tau_\kappa-k) \grad(\phi(u_\eps)\con \tau_\kappa) 
  \grad_y \varphi_{\delta,\delta_0}  J_l(u_{\theta}(t,x)-k)\,dk\,dx\,dt\,dy\,ds\Big] \notag \\
  -& \E \Big[\int_{\Pi_T}\int_{\Pi_T} \int_{\R} \beta^\prime(u_\eps\con \tau_\kappa-k)(F(u_\eps)\con \tau_\kappa) \grad_y
 \varphi_{\delta,\delta_0} J_l(u_{\theta}(t,x)-k)\,dk\,dx\,dt\,dy\,ds\Big] \notag \\
   -& \E \Big[\int_{\Pi_T}\int_{\Pi_T} \int_{\R} \beta^{\prime\prime}(u_\eps\con \tau_\kappa-k)(F(u_\eps)\con \tau_\kappa) \grad_y
 (u_\eps \con \tau_\kappa) \varphi_{\delta,\delta_0} J_l(u_{\theta}(t,x)-k)\,dk\,dx\,dt\,dy\,ds\Big] \notag \\
  -&\eps  \E \Big[\int_{\Pi_T} \int_{\Pi_T} \int_{\R} \beta^\prime(u_\eps\con \tau_\kappa(s,y)-k)
 \grad_y (u_\eps\con \tau_\kappa)\cdot\grad_y  \varphi_{\delta,\delta_0}
 J_l(u_{\theta}(t,x)-k)\,dk \,dy\,ds\,dx\,dt\Big] \notag \\
 & \text{i.e.,} \qquad J_{0,1} + J_{0,2} \le  J_1 + J_2 + J_3 + J_4 + J_5 + J_6 + J_7 + J_8. \label{stochas_entropy_2}
\end{align} 
We now add \eqref{stochas_entropy_1} and \eqref{stochas_entropy_2} and look for the passage to the limit in various parameter 
involved. Note that $I_{0,1}$ and $J_{0,1}$ are both positive terms and they are left hand side of the inequalities \eqref{stochas_entropy_1}
and \eqref{stochas_entropy_2} respectively. Hence we can omit these terms. Let us consider the expressions $I_{0,2}$ and $J_{0,2}$. Recall that
\begin{align*}
I_{0,2} &=\E \Big[\int_{\Pi_T}\int_{\Pi_T} \int_{\R} \beta^{\prime \prime }(u_{\theta}(t,x)-k) |\Grad G(u_{\theta}(t,x))|^2 \varphi_{\delta,\delta_0}
  J_l(u_\eps\con \tau_\kappa(s,y)-k)\,dk\,dx\,dt\,dy\,ds\Big] 
  \end{align*}
  and 
  \begin{align*}
  J_{0,2} & = \E \Big[\int_{\Pi_T}\int_{\Pi_T} \int_{\R} \beta^{\prime \prime}(u_\eps\con \tau_\kappa -k)
\grad( \phi(u_\eps)\con \tau_\kappa)\cdot \grad(u_\eps\con \tau_\kappa)
\varphi_{\delta,\delta_0} J_l(u_{\theta}(t,x)-k)\,dk\,dx\,dt\,dy\,ds\Big].
\end{align*}
 By using the properties of Lebesgue points, convolutions, and approximations by mollifications 
one can able to pass to the limit in $I_{0,2}$ and $J_{0,2}$, and conclude the following lemma.
\begin{lem}\label{stochastic_lemma_1}
It holds that 
 \begin{align}
  \lim_{l\goto 0}\lim_{\kappa \goto 0}\lim_{\delta_0 \goto 0}I_{0,2} =  \E \Big[\int_{\Pi_T}\int_{\R^d} 
 \beta^{\prime \prime }(u_{\theta}(t,x)-u_\eps(t,y)) |\Grad G(u_{\theta}(t,x))|^2 \psi(t,y)\varrho_{\delta} (x-y)\,dx\,dt\,dy\,\Big] \notag 
\end{align} and 
\begin{align}
  & \lim_{l\goto 0}\lim_{\kappa \goto 0}\lim_{\delta_0 \goto 0}J_{0,2} 
   = \E\Big[\int_{\Pi_T}\int_{\R^d}  \beta^{\prime \prime}\big(u_\eps(s,y)-u_{\theta}(s,x)\big)\grad \phi(u_\eps(s,y))\cdot \grad u_\eps(s,y)\notag \\
   & \hspace{5cm} \times \psi(s,y)\varrho_{\delta}(x-y)\,dx\,dy\,ds\Big].\notag
\end{align}
\end{lem}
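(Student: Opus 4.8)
The plan is to evaluate the three limits in the prescribed order $\delta_0\goto0$, then $\kappa\goto0$, then $l\goto0$, handling $I_{0,2}$ and $J_{0,2}$ in parallel since the same mechanism drives both; the only genuinely delicate point is the product of gradients in $J_{0,2}$. Throughout I would argue $\omega$ by $\omega$ (deterministically) and lift to the expectation by dominated convergence, the dominating weight coming from the \textit{a priori} bound \eqref{bounds:a-priori-viscous-solution}, which supplies $|\Grad G(u_\theta)|^2\in L^1(\Omega\times\Pi_T)$ and $\grad\phi(u_\eps),\grad u_\eps\in L^2(\Omega\times\Pi_T)$. A first useful observation is that, since $\|J_l\|_{L^1(\R)}=1$ and $\beta''$ is bounded, the inner $k$-integral $\int_\R\beta''(\cdots-k)J_l(\cdots-k)\,dk$ is bounded by $\|\beta''\|_\infty$ \emph{uniformly} in all parameters; together with $\int\rho_{\delta_0}=\int\varrho_\delta=1$ this yields, for $I_{0,2}$, a single $(\delta_0,\kappa)$-independent dominating function, namely a constant multiple of $|\Grad G(u_\theta(t,x))|^2\psi(t,y)\varrho_\delta(x-y)$.

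For $\delta_0\goto0$ I would use that $\rho_{\delta_0}(t-s)$ is an approximate identity in time (one-sided, $\supp\rho\subset[-1,0]$, but $\int\rho=1$ suffices here). Fixing $(t,x,y,k)$ and $\omega$, the map $s\mapsto\psi(s,y)J_l(u_\eps\con\tau_\kappa(s,y)-k)$ is bounded and lies in $L^2(0,T)$ for fixed $\kappa$, so at almost every Lebesgue point $t$ the convolution against $\rho_{\delta_0}(t-\cdot)$ converges to its value at $s=t$. Integrating out $s$ in $I_{0,2}$ thus collapses $s$ to $t$, replacing $\psi(s,y)$ and $J_l(u_\eps\con\tau_\kappa(s,y)-k)$ by $\psi(t,y)$ and $J_l(u_\eps\con\tau_\kappa(t,y)-k)$; the symmetric argument in $J_{0,2}$ integrates out $t$ and collapses it to $s$, turning $J_l(u_{\theta}(t,x)-k)$ into $J_l(u_{\theta}(s,x)-k)$ while the gradient factor at $(s,y)$ is inert. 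Dominated convergence upgrades each to convergence of the expectation.

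The limit $\kappa\goto0$ is trivial for $I_{0,2}$, where only $J_l(u_\eps\con\tau_\kappa(t,y)-k)$ carries $\kappa$: since $u_\eps\con\tau_\kappa\goto u_\eps$ in $L^2(\Pi_T)$ and, along a subsequence, a.e., and $J_l$ is bounded and continuous, dominated convergence (with the weight above) gives the first stated identity after the subsequent $l\goto0$. For $J_{0,2}$ this is the crux: one must pass to the limit in $\grad(\phi(u_\eps)\con\tau_\kappa)\cdot\grad(u_\eps\con\tau_\kappa)$, each factor only $L^2$ and not sign-definite, so no pointwise $\kappa$-independent dominating function exists. The remedy exploits $u_\eps,\phi(u_\eps)\in H^1(\R^d)$: then $\grad(u_\eps\con\tau_\kappa)=(\grad u_\eps)\con\tau_\kappa\goto\grad u_\eps$ and $\grad(\phi(u_\eps)\con\tau_\kappa)\goto\grad\phi(u_\eps)$ \emph{strongly} in $L^2$, and a product of two strongly $L^2$-convergent sequences converges in $L^1$. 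Multiplying by the bounded, a.e.\ convergent weight $\beta''(u_\eps\con\tau_\kappa-k)J_l(u_{\theta}(s,x)-k)$ preserves $L^1$ convergence, producing the integrand $\beta''(u_\eps(s,y)-k)\grad\phi(u_\eps(s,y))\cdot\grad u_\eps(s,y)\,\psi(s,y)\varrho_\delta(x-y)J_l(u_{\theta}(s,x)-k)$.

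Finally, for $l\goto0$ I would use that $J_l(\cdot-k)$ is an approximate identity in the $k$-variable: since $\beta''$ is bounded and continuous, $\int_\R\beta''(u_\theta(t,x)-k)J_l(u_\eps(t,y)-k)\,dk\goto\beta''(u_\theta(t,x)-u_\eps(t,y))$ boundedly pointwise, and likewise $\int_\R\beta''(u_\eps(s,y)-k)J_l(u_\theta(s,x)-k)\,dk\goto\beta''(u_\eps(s,y)-u_\theta(s,x))$. A last dominated-convergence step then delivers the two asserted identities. The only obstruction worth flagging is the $J_{0,2}$ product step above; the remaining passages are a routine combination of Lebesgue-point and approximate-identity arguments controlled by \eqref{bounds:a-priori-viscous-solution}.
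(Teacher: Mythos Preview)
Your proposal is correct and is precisely the approach the paper has in mind: the paper gives no detailed proof here, stating only that the limits follow ``by using the properties of Lebesgue points, convolutions, and approximations by mollifications,'' and your argument fleshes out exactly these three mechanisms in the prescribed order. Your identification of the $\kappa\to0$ step in $J_{0,2}$ (strong $L^2$ convergence of each gradient factor, hence $L^1$ convergence of their product, combined with the bounded pointwise convergent weight) is the correct way to handle the only nontrivial point, and is consistent with how the paper treats the analogous passages elsewhere (cf.\ the proof of Lemma~\ref{stochastic_lemma_nondegeneracy}).
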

\begin{lem}\label{stochastic_lemma_additional_nondegeneracy}
 It follows that
 \begin{align}
& \limsup_{\theta \goto 0}\,\limsup_{\eps \goto 0}\, \lim_{l\goto 0}\lim_{\kappa \goto 0}\lim_{\delta_0 \goto 0} \big(I_{0,2} + J_{0,2} \big) \notag \\
& \ge  2\,\E\Big[\int_{\Pi_T}\int_{\R^d} \int_0^1 \int_0^1 \int_{\tilde{u}(t,x,\gamma)}^{u(t,y,\alpha)}
 \Big(\int_{s=\sigma}^{\tilde{u}(t,x,\gamma)} \beta^{\prime \prime}(\sigma-s) \sqrt{\phi^\prime(s)}\,ds\Big)\sqrt{\phi^\prime(\sigma)}\,d\sigma \notag \\
& \hspace{6cm} \times \textrm{div}_y \grad_x \big[\psi(t,y)\varrho_{\delta}(x-y)\big] \,d\gamma\,d\alpha\,dx\,dy\,dt\Big]. \label{inq:additional_nondegeneracy}
 \end{align}
\end{lem}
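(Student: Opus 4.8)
The plan is to treat Lemma~\ref{stochastic_lemma_additional_nondegeneracy} as the stochastic, Young-measure version of Carrillo's entropy-dissipation estimate: start from the two limit identities of Lemma~\ref{stochastic_lemma_1}, recombine the two parabolic dissipation terms into a single nonnegative quadratic form, bound it below by a cross term through an elementary pointwise inequality, and then recognise the cross term as an exact mixed divergence that can be integrated by parts onto the smooth weight.

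First I would rewrite the limit of $J_{0,2}$: since $\grad\phi(u_\eps)\cdot\grad u_\eps=\phi'(u_\eps)|\grad u_\eps|^2=|\Grad G(u_\eps)|^2$, Lemma~\ref{stochastic_lemma_1} shows that the inner limits of $I_{0,2}$ and $J_{0,2}$ both produce a Kirchhoff gradient, one in $x$ and one in $y$; using that $\beta''$ is even and that the time variables coincide after $\delta_0\goto0$,
\[
\lim_{l\goto0}\lim_{\kappa\goto0}\lim_{\delta_0\goto0}\big(I_{0,2}+J_{0,2}\big)=\E\Big[\int_{\Pi_T}\int_{\rd}\beta''(u_\theta(t,x)-u_\eps(t,y))\big(|\Grad_x G(u_\theta(t,x))|^2+|\Grad_y G(u_\eps(t,y))|^2\big)\psi(t,y)\varrho_\delta(x-y)\,dx\,dt\,dy\Big].
\]
As $\beta''\ge0$, the inequality $|p|^2+|q|^2\ge2\,p\cdot q$ with $p=\Grad_x G(u_\theta)$, $q=\Grad_y G(u_\eps)$ gives the lower bound $2\,\E[\int\beta''(u_\theta-u_\eps)\Grad_x G(u_\theta)\cdot\Grad_y G(u_\eps)\psi\varrho_\delta]$.

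The crucial step is a chain-rule identity. Setting
\[
\Lambda(a,b):=\int_b^a\Big(\int_\sigma^b\beta''(\sigma-s)\sqrt{\phi'(s)}\,ds\Big)\sqrt{\phi'(\sigma)}\,d\sigma,
\]
a direct Leibniz computation yields $\partial_a\partial_b\Lambda(a,b)=\beta''(a-b)\sqrt{\phi'(a)}\sqrt{\phi'(b)}$. Evaluating at $a=u_\eps(t,y)$, $b=u_\theta(t,x)$ and using $\Grad_x G(u_\theta)=\sqrt{\phi'(u_\theta)}\,\Grad_x u_\theta$, $\Grad_y G(u_\eps)=\sqrt{\phi'(u_\eps)}\,\Grad_y u_\eps$, the chain rule gives
\[
\textrm{div}_y\grad_x\big[\Lambda(u_\eps(t,y),u_\theta(t,x))\big]=\partial_a\partial_b\Lambda\,\big(\grad_x u_\theta\cdot\grad_y u_\eps\big)=\beta''(u_\theta-u_\eps)\,\Grad_x G(u_\theta)\cdot\Grad_y G(u_\eps),
\]
so the cross-term density is an exact mixed divergence. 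Integrating by parts once in $x$ and once in $y$ transfers both derivatives onto the weight and produces $2\,\E[\int_{\Pi_T}\int_{\rd}\Lambda(u_\eps(t,y),u_\theta(t,x))\,\textrm{div}_y\grad_x[\psi(t,y)\varrho_\delta(x-y)]\,dx\,dt\,dy]$. This step is legitimate because, for fixed $\eps,\theta$, one has $u_\theta\in H^1$ in $x$ and $u_\eps\in H^1$ in $y$, while $\partial_a\Lambda$ and $\partial_b\Lambda$ are bounded and Lipschitz on the relevant range, so $\Lambda(u_\eps(t,y),u_\theta(t,x))$ carries the required weak mixed derivative.

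Finally I would pass to the limits $\eps\goto0$ and then $\theta\goto0$. Since $\beta''(\sigma-s)$ is supported in $|\sigma-s|\le\vartheta$ and $\sqrt{\phi'}$ is bounded, $\Lambda$ is continuous with at most linear growth, $|\Lambda(a,b)|\le C|a-b|$; together with the uniform $L^2(\Omega\times\Pi_T)$ bounds on $\{u_\eps\},\{u_\theta\}$ and the boundedness and compact support of $\textrm{div}_y\grad_x[\psi\varrho_\delta]$, the fundamental theorem of Young measures (as in Lemmas~$4.2$--$4.3$ of \cite{BisMajKarl_2014}) lets me replace $u_\eps(t,y)$ by $u(t,y,\alpha)$ and $u_\theta(t,x)$ by $\tilde u(t,x,\gamma)$ and integrate over the Young parameters. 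As the pointwise lower bound holds for every $(\eps,\theta)$, it is preserved under $\limsup_{\theta}\limsup_{\eps}$, and the recombined right-hand side converges to a genuine limit, which is precisely \eqref{inq:additional_nondegeneracy}. The main obstacle I anticipate is the rigorous justification of the double integration by parts and of the Young-measure passage in the presence of degeneracy: one must verify that $\Lambda(u_\eps(\cdot,y),u_\theta(\cdot,x))$ really lies in the Sobolev class making the mixed-divergence identity valid — this is where the continuity/modulus hypothesis on $\sqrt{\phi'}$ in \ref{A1} enters (through Lemma~\ref{stochastic_lemma_1}) — and that the linear growth of $\Lambda$ combined with the uniform $L^2$ estimates supplies the uniform integrability needed against the compactly supported weight.
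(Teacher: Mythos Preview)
Your proposal is correct and follows exactly the strategy the paper invokes by citing \cite[Lemma~3.4]{BaVaWit_2014}: combine the two Kirchhoff-gradient terms from Lemma~\ref{stochastic_lemma_1}, apply $|p|^2+|q|^2\ge 2\,p\cdot q$, recognise the cross term as $\textrm{div}_y\grad_x\Lambda(u_\eps,u_\theta)$ via the Leibniz identity $\partial_a\partial_b\Lambda=\beta''(a-b)\sqrt{\phi'(a)}\sqrt{\phi'(b)}$, integrate by parts onto the weight, and pass to the Young-measure limits using the linear growth $|\Lambda(a,b)|\le C|a-b|$. One small correction: the modulus-of-continuity hypothesis on $\sqrt{\phi'}$ in \ref{A1} is \emph{not} used in this lemma (nor in Lemma~\ref{stochastic_lemma_1}); it enters only later, in Lemma~\ref{stochastic_lemma_6}, to control $\mathcal{H}_1$ and $\mathcal{H}_2$.
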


\begin{proof}
In view of Lemma \ref{stochastic_lemma_1}, we see that
\begin{align}
& \lim_{l\goto 0}\lim_{\kappa \goto 0}\lim_{\delta_0 \goto 0} \big(I_{0,2} + J_{0,2}\big) \notag \\
= & \E\Big[\int_{\Pi_T}\int_{\R^d}  \beta^{\prime \prime}(u_\eps(t,y)-u_{\theta}(t,x))
 \Big(|\grad_y G(u_\eps(t,y))|^2 + |\grad_x G(u_{\theta}(t,x))|^2\Big) \psi(t,y)\varrho_{\delta}(x-y)\,dx\,dy\,dt\Big].\notag
\end{align} 
 Let $u(t,y,\alpha)$ and  $\tilde{u}(t,x,\gamma)$ be Young measure-valued narrow limit associated to the sequences $\{u_\eps(t,y)\}_{\eps>0}$ and 
 $\{u_\theta(t,x)\}_{\theta>0}$ respectively. With these at hand, one can use a similar argument as in \cite[Lemma 3.4]{BaVaWit_2014} and arrive at
 the conclusion that \eqref{inq:additional_nondegeneracy} holds.
\end{proof}

Let us consider the terms  $(I_1 + J_1)$ coming from initial conditions. Note that $I_1=0$ as supp $\rho_{\delta_0} \subset [-\delta_0, 0)$. 
Under a slight modification of the same line arguments as in \cite{BisMajKarl_2014}, we arrive at the following lemma
\begin{lem}\label{stochastic_lemma_initialcond}
It holds that 
 \begin{align}
 \lim_{\theta \goto 0} \lim_{\eps \goto 0} \lim_{l\goto 0} \lim_{\kappa \goto 0}\lim_{\delta_0 \goto 0} \big(I_1 + J_1\big)
 = \E\Big[\int_{\R^d}\int_{\R^d} \beta_{\vartheta}\big(v_0(x)-u_0(y)\big)\,\psi(0,y)\varrho_{\delta} (x-y)\,dx \,dy\Big]\notag 
\end{align}
and
\begin{align}
 \lim_{(\vartheta, \delta)\goto (0,0)} & \E\Big[\int_{\R^d}\int_{\R^d} \beta_{\vartheta} \big(v_0(x)-u_0(y)\big)\,\psi(0,y)\varrho_{\delta} (x-y)\,dx \,dy \Big]\notag \\
&= \E\Big[\int_{\R^d} \big|v_0(x)-u_0(x)\big|\,\psi(0,x)\,dx\Big]. \notag
\end{align}
\end{lem}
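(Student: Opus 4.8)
The plan is to establish the two displayed identities in turn: first the iterated limit of the boundary contributions $I_1+J_1$ at fixed $\vartheta$ and $\delta$, and then the joint collapse $(\vartheta,\delta)\to(0,0)$ of the resulting double integral. I would begin by discarding the contribution carried by the $s=0$ slice. Since $\varphi_{\delta,\delta_0}(t,x,0,y)=\rho_{\delta_0}(t)\varrho_\delta(x-y)\psi(0,y)$ and $\supp\rho_{\delta_0}\subset[-\delta_0,0)$, the factor $\rho_{\delta_0}(t)$ forces $t\in[-\delta_0,0)$, which is incompatible with the time domain $t\ge 0$ of $\Pi_T$; hence this boundary term vanishes identically, and only the initial contribution carried by $\rho_{\delta_0}(0-s)=\rho_{\delta_0}(-s)$, supported in $s\in(0,\delta_0]$, remains to be analysed.

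For the surviving term I would peel off the limits in the prescribed order. Taking $\delta_0\to0$ first, the normalised kernel $s\mapsto\rho_{\delta_0}(-s)\,ds$ is an approximate identity concentrating at $s=0^+$; since the regularised viscous solution $s\mapsto u_\eps\con\tau_\kappa(s,y)$ is right continuous at $0$ with value $u_0\con\tau_\kappa(y)$ (read off from the integral formulation \eqref{eq:levy_stochconservation_laws-viscous-regularize}, the jump measure carrying no atom at $s=0$ almost surely) and $\psi(\cdot,y)$ is continuous, the average of $\psi(s,y)J_l(u_\eps\con\tau_\kappa(s,y)-k)$ over $[0,\delta_0]$ converges to $\psi(0,y)J_l(u_0\con\tau_\kappa(y)-k)$, the $\E$ and spatial integrals passing by the a priori $L^2$ bounds. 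Crucially this limit is independent of $\eps$. Next $\kappa\to0$ gives $u_0\con\tau_\kappa\to u_0$ in $L^2(\R^d)$, whence $J_l(u_0\con\tau_\kappa(y)-k)\to J_l(u_0(y)-k)$; then $l\to0$ turns $J_l(u_0(y)-k)\,dk$ into evaluation at $k=u_0(y)$, producing $\beta_\vartheta(v_0(x)-u_0(y))$. At this point the integrand depends on neither $\eps$ nor $\theta$, so the two outer limits are vacuous and we obtain exactly $\E[\int_{\R^d}\int_{\R^d}\beta_\vartheta(v_0(x)-u_0(y))\psi(0,y)\varrho_\delta(x-y)\,dx\,dy]$.

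For the second identity I would exploit the two-sided bound $|r|-M_1\vartheta\le\beta_\vartheta(r)\le|r|$ from \eqref{eq:approx to abosx}. Replacing $\beta_\vartheta$ by $|\cdot|$ costs at most $M_1\vartheta\,\E[\int_{\R^d}\int_{\R^d}\psi(0,y)\varrho_\delta(x-y)\,dx\,dy]=M_1\vartheta\int_{\R^d}\psi(0,y)\,dy$, which is $O(\vartheta)$ uniformly in $\delta$. It then remains to send $\delta\to0$ in $\E[\int_{\R^d}\int_{\R^d}|v_0(x)-u_0(y)|\psi(0,y)\varrho_\delta(x-y)\,dx\,dy]$; writing the $x$-integral as a convolution and substituting $x=y+\delta z$, the continuity of translation in $L^1$ (legitimate since $u_0,v_0\in L^2(\R^d)$ and $\psi(0,\cdot)$ is bounded with compact support) yields $\E[\int_{\R^d}|v_0(x)-u_0(x)|\psi(0,x)\,dx]$. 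Combining the two estimates gives the joint limit.

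The step I expect to be the main obstacle is the very first $\delta_0\to0$ passage: one must show that averaging the nonlinear integrand $J_l(u_\eps\con\tau_\kappa(s,y)-k)$ over the shrinking window $[0,\delta_0]$ genuinely reproduces its $s=0$ value after taking expectation, and that this convergence commutes with the subsequent limits in $\kappa$ and $l$. This is precisely where the càdlàg time regularity of the mollified viscous solution and a strong (rather than merely weak) averaged initial trace are indispensable; the presence of the compensated Poisson integral, which introduces jumps, is what makes this more delicate than in the purely continuous setting, and the absence of a jump exactly at $s=0$ is what rescues the identification $u_\eps\con\tau_\kappa(0^+,y)=u_0\con\tau_\kappa(y)$.
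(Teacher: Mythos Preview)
Your proposal is correct and follows essentially the same approach as the paper, which does not write out the argument but instead refers to \cite{BisMajKarl_2014} (``Under a slight modification of the same line arguments as in \cite{BisMajKarl_2014}, we arrive at the following lemma''). Your identification of which boundary term vanishes, the use of the c\`adl\`ag structure of the mollified viscous solution at $s=0^+$ for the $\delta_0\to 0$ passage, and the subsequent routine limits in $\kappa$, $l$ (and the vacuous $\eps,\theta$ limits), as well as the $\beta_\vartheta\to|\cdot|$ and translation-continuity argument for the second identity, are exactly the standard ingredients that reference supplies. Note incidentally that the paper's sentence ``$I_1=0$'' appears to be a typo for ``$J_1=0$'': your support analysis of $\rho_{\delta_0}$ shows it is the $s=0$ slice $J_1$ that vanishes, and the surviving term $I_1$ carries the initial datum $v_0(x)$ paired with $J_l(u_\eps\ast\tau_\kappa(s,y)-k)$, in agreement with the final expression $\beta_\vartheta(v_0(x)-u_0(y))$.
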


We now turn our attention to $(I_2 + J_2)$. Note that $\partial_t \rho_{\delta_0}(t-s)= - \partial_s \rho_{\delta_0}(t-s)$ and 
$\beta, J_l$ are even functions. A simple calculation gives 
 \begin{align*}
  & I_2+ J_2 = \E\Big[\int_{\Pi_T}\int_{\Pi_T} \int_{\R} \beta\big(u_\eps \con \tau_\kappa(s,y)-k\big) \partial_s\psi(s,y)\, \rho_{\delta_0}(t-s)
  \varrho_\delta(x-y) \\
  & \hspace{5cm} \times J_l(u_{\theta}(t,x)-k)\,dk\,dy\,ds\,dx\,dt\Big]. 
 \end{align*}
 One can pass to the limit in $(I_2 + J_2)$ and have the following conclusion.
\begin{lem}\label{stochastic_lemma_partial_time}
  It holds that
  \begin{align*}
& \lim_{\theta \goto 0} \lim_{\eps \goto 0} \lim_{l\goto 0} \lim_{\kappa \goto 0}\lim_{\delta_0 \goto 0} \big(I_2 + J_2\big) \\
= & \E \Big[\int_{\Pi_T}\int_{\R^d}\int_0^1 \int_0^1 \beta\big(u(s,y,\alpha)-\tilde{u}(s,x,\gamma)\big) \partial_s\psi(s,y)\,
  \varrho_\delta(x-y)\,d\gamma \,d\alpha \,dy\,dx\,ds\Big],\\
  \end{align*} 
  and 
  \begin{align*}
  \lim_{(\vartheta,\delta)\goto (0,0)} & \E\Big[\int_{\Pi_T}\int_{\R^d}\int_0^1\int_{0}^1 \beta\big(u(s,y,\alpha)-\tilde{u}(s,x,\gamma)\big) \partial_s\psi(s,y)\,
  \varrho_\delta(x-y)\,d\gamma \,d\alpha \,dy\,dx\,ds\Big] \\
  & = \E\Big[\int_0^T \int_{\R^d}\int_0^1 \int_{0}^1 \big|u(s,y,\alpha)-\tilde{u}(s,y,\gamma)\big| \partial_s\psi(s,y)\,d\gamma\,d\alpha\,dy\,ds\Big].
  \end{align*}
  \end{lem}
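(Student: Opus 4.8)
The plan is to start from the already-simplified expression for $I_2+J_2$ recorded just above the statement, namely
\[
I_2+J_2=\E\Big[\int_{\Pi_T}\int_{\Pi_T}\int_{\R}\beta\big(u_\eps\con\tau_\kappa(s,y)-k\big)\,\partial_s\psi(s,y)\,\rho_{\delta_0}(t-s)\,\varrho_\delta(x-y)\,J_l\big(u_{\theta}(t,x)-k\big)\,dk\,dy\,ds\,dx\,dt\Big],
\]
and to evacuate the five parameters $\delta_0,\kappa,l,\eps,\theta$ one at a time, in that order. First I would let $\delta_0\goto0$. The only factor depending on $t$ is $J_l(u_{\theta}(t,x)-k)$, and since $\supp\rho\subset[-1,0]$ the kernel $\rho_{\delta_0}(t-s)$ is an approximate identity localising $t$ to the left of $s$; by $L^2$-continuity of the time-translations of $u_\theta\in L^2(\Omega\times\Pi_T)$ together with the Lipschitz continuity of the fixed mollifier $J_l$, the $t$-integral against $\rho_{\delta_0}(t-s)$ converges and replaces $u_\theta(t,x)$ by $u_\theta(s,x)$. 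Next, letting $\kappa\goto0$ uses $u_\eps\con\tau_\kappa\to u_\eps$ in $L^2(\Omega\times\Pi_T)$ and the Lipschitz bound on $\beta$ to replace $u_\eps\con\tau_\kappa(s,y)$ by $u_\eps(s,y)$. Then $l\goto0$: since $J_l$ is an approximate identity in the $k$-variable centred at $u_\theta(s,x)$, one has $\int_\R\beta(u_\eps(s,y)-k)\,J_l(u_\theta(s,x)-k)\,dk\to\beta\big(u_\eps(s,y)-u_\theta(s,x)\big)$, so the $k$-integral disappears.

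At this stage the integrand is the Carathéodory function $\beta(u_\eps(s,y)-u_\theta(s,x))\,\partial_s\psi(s,y)\,\varrho_\delta(x-y)$, which is controlled in $L^1$ uniformly in $\eps,\theta$: since $\beta$ is Lipschitz with $\beta(0)=0$ one has $|\beta(u_\eps-u_\theta)|\le C(|u_\eps|+|u_\theta|)$, and the viscous solutions are bounded in $L^2$ by \eqref{bounds:a-priori-viscous-solution}, while $\partial_s\psi\cdot\varrho_\delta$ confines $(s,y)$ and $x$ to a bounded set; this yields uniform integrability. I would then send $\eps\goto0$ and invoke the Young-measure convergence already used in the proof of Lemma \ref{stochastic_lemma_additional_nondegeneracy}: for fixed $\theta$ the function $u_\eps(s,y)$ generates the Young measure $u(s,y,\alpha)$, so the $\eps$-limit introduces $\int_0^1\beta(u(s,y,\alpha)-u_\theta(s,x))\,d\alpha$. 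Sending $\theta\goto0$ and using in the same way the Young measure $\tilde u(s,x,\gamma)$ generated by $u_\theta(s,x)$ produces the double average $\int_0^1\int_0^1\beta(u(s,y,\alpha)-\tilde u(s,x,\gamma))\,d\gamma\,d\alpha$, which is exactly the first claimed identity.

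For the second identity I would treat the joint limit $(\vartheta,\delta)\goto(0,0)$ by splitting it into two independent effects. On the one hand, \eqref{eq:approx to abosx} gives $\big|\beta_\vartheta(r)-|r|\big|\le M_1\vartheta$ uniformly in $r$, so replacing $\beta_\vartheta$ by $|\cdot|$ costs at most $M_1\vartheta\,\|\partial_s\psi\|_\infty\,|\supp\psi|$, which tends to $0$ as $\vartheta\goto0$ uniformly in $\delta$. On the other hand, using that $|\cdot|$ is $1$-Lipschitz, the error produced by removing the mollifier $\varrho_\delta(x-y)$ is bounded by $\E\big[\int\!\int|\tilde u(s,x,\gamma)-\tilde u(s,y,\gamma)|\,\varrho_\delta(x-y)\,dx\,dy\,ds\,d\gamma\big]\,\|\partial_s\psi\|_\infty$, which converges to $0$ as $\delta\goto0$ by continuity of the spatial translations of $\tilde u\in L^2(\Omega\times\Pi_T\times(0,1))$ (substitute $x=y+\delta z$ and apply dominated convergence in $z$). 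Since the first estimate is uniform in $\delta$ and the second uniform in $\vartheta$, the joint limit exists and equals $\E\big[\int_0^T\!\int_{\R^d}\int_0^1\int_0^1|u(s,y,\alpha)-\tilde u(s,y,\gamma)|\,\partial_s\psi(s,y)\,d\gamma\,d\alpha\,dy\,ds\big]$.

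The step I expect to be the main obstacle is the very first one, $\delta_0\goto0$, because it couples the two time variables $t$ and $s$ and thereby forces the diagonal $t=s$ onto the process $u_\theta$, which is only known to be $L^2$ in time; making this rigorous requires the $L^2$ time-translation estimate for $u_\theta$ rather than any pointwise time regularity, and one must check the convergence survives the subsequent $k$- and Young-measure limits. The spatial collapse $x\to y$ appearing in the second identity is of the same nature but easier, since there $|\cdot|$ supplies a clean $1$-Lipschitz bound and no stochastic integral is involved.
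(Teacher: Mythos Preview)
Your proposal is correct and follows essentially the same approach as the paper. The paper does not give a detailed proof of this particular lemma---it simply records the simplified expression for $I_2+J_2$ and asserts that ``one can pass to the limit''---but the five-parameter cascade you describe ($\delta_0$ via $L^2$-time-translation continuity, $\kappa$ via mollifier convergence, $l$ via the approximate identity in $k$, then $\eps,\theta$ via Young measures, and finally $(\vartheta,\delta)$ via \eqref{eq:approx to abosx} and spatial translation continuity) is precisely the template the paper uses in its fully proved lemmas, e.g.\ Lemma~\ref{stochastic_lemma_nondegeneracy}, and in the cited works \cite{BaVaWit_2012,BaVaWit_2014,BisMajKarl_2014}.
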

  Let us consider $I_6$ and $J_5$. Regarding this, we have the following lemma
  \begin{lem} \label{stochastic_lemma_nondegeneracy}
  It holds that
   \begin{align}
&   \lim_{\theta \downarrow 0} \lim_{\eps\downarrow 0} \lim_{l\downarrow 0} \lim_{\kappa \downarrow 0} \lim_{\delta_0 \downarrow 0} I_6\notag\\&=
     \E \Big[\int_{\Pi_T}\int_{\R^d}\int_0^1 \int_0^1 \phi^\beta \big(\tilde{u}(s,x,\gamma),u(s,y,\alpha)\big)\Delta_x \varrho_\delta(x-y)
     \psi(s,y)\,d\gamma\, d\alpha \,dx\,dy\,ds\Big]\notag
   \end{align}
   and
    \begin{align}
 &  \lim_{\theta \downarrow 0} \lim_{\eps\downarrow 0} \lim_{l\downarrow 0} \lim_{\kappa \downarrow 0} \lim_{\delta_0 \downarrow 0} J_5\notag\\
   =& \E \Big[\int_{\Pi_T}\int_{\R^d} \int_{[0,1]^2}\phi^\beta\big(u(s,y,\alpha),\tilde{u}(s,x,\gamma)\big)
  \Delta_y \big[\psi(s,y)\varrho_\delta(x-y)\big] \,d\gamma\, d\alpha\, dx \,dy \,ds\Big].\notag
   \end{align}
  \end{lem}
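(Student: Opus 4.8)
The plan is to dispatch both $I_6$ and $J_5$ through the \emph{same} nested passage to the limit in the prescribed order $\delta_0\to0,\ \kappa\to0,\ l\to0,\ \eps\to0,\ \theta\to0$, following the template already used for $I_{0,2},J_{0,2}$ in Lemma~\ref{stochastic_lemma_1}. The term $I_6$ is already in the right shape, carrying the factor $\phi^\beta(u_\theta(t,x),k)$, the smooth kernel $\Delta_x\varrho_\delta(x-y)\psi(s,y)$, the time mollifier $\rho_{\delta_0}(t-s)$ and the $k$-mollifier $J_l(u_\eps\con\tau_\kappa(s,y)-k)$, whereas $J_5$ first needs a chain-rule/integration-by-parts rewriting to be brought into that shape. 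Throughout I would use the linear bound $|\phi^\beta(a,b)|\le c_\phi|a-b|$, which follows from \ref{A1} and $|\beta'|\le1$, together with the a priori estimate \eqref{bounds:a-priori-viscous-solution}.

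For $I_6$ I would proceed as follows. Since $\supp\rho_{\delta_0}\subset[-\delta_0,0]$, the factor $\rho_{\delta_0}(t-s)$ is an approximate identity forcing $t\to s$ from the left, and a Lebesgue-point argument in time (valid because $t\mapsto\phi^\beta(u_\theta(t,x),k)\in L^2(\Omega\times\Pi_T)$) collapses the $t$-integration as $\delta_0\to0$. Next, $u_\eps\con\tau_\kappa\to u_\eps$ in $L^2(\Omega\times\Pi_T)$ and the Lipschitz continuity of $J_l$ let me replace $u_\eps\con\tau_\kappa(s,y)$ by $u_\eps(s,y)$ as $\kappa\to0$. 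Then $J_l(u_\eps(s,y)-k)\,dk$ is an approximate identity in the $k$-variable, so letting $l\to0$ collapses $k\to u_\eps(s,y)$ and leaves $\E\big[\int_0^T\!\!\int_{\rd}\!\int_{\rd}\phi^\beta(u_\theta(s,x),u_\eps(s,y))\,\Delta_x\varrho_\delta(x-y)\psi(s,y)\,dx\,dy\,ds\big]$.

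For $J_5$ I would first pass $\kappa\to0$ so that $\grad(\phi(u_\eps)\con\tau_\kappa)\to\grad\phi(u_\eps)$ in $L^2$ and $\beta'(u_\eps\con\tau_\kappa-k)\to\beta'(u_\eps-k)$; since $u_\eps,\phi(u_\eps)\in H^1(\rd)$ the chain rule $\beta'(u_\eps-k)\grad_y\phi(u_\eps)=\grad_y\big[\phi^\beta(u_\eps(s,y),k)\big]$ then holds. As $J_l(u_\theta(t,x)-k)$ does not depend on $y$ and $\psi$ is compactly supported, integrating by parts in $y$ (no boundary terms) turns $-\int\grad_y\phi^\beta\cdot\grad_y\varphi_{\delta,\delta_0}\,dy$ into $+\int\phi^\beta\,\Delta_y\varphi_{\delta,\delta_0}\,dy$, and with $\Delta_y\varphi_{\delta,\delta_0}=\rho_{\delta_0}(t-s)\Delta_y[\varrho_\delta(x-y)\psi(s,y)]$ the term $J_5$ now has exactly the structure of $I_6$. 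Collapsing $t\to s$ as $\delta_0\to0$ and then $k\to u_\theta(s,x)$ as $l\to0$ produces $\E\big[\int_0^T\!\!\int_{\rd}\!\int_{\rd}\phi^\beta(u_\eps(s,y),u_\theta(s,x))\,\Delta_y[\varrho_\delta(x-y)\psi(s,y)]\,dx\,dy\,ds\big]$.

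Finally, for both terms I would let $\eps\to0$ and then $\theta\to0$ and invoke the narrow Young-measure convergence established in \cite{BisMajKarl_2014} (Lemmas $4.2$ and $4.3$) and \cite{BaVaWit_2014}: with $\theta$ fixed, $u_\eps(s,y)$ generates the Young measure $u(s,y,\alpha)$, so continuity of $b\mapsto\phi^\beta(u_\theta(s,x),b)$ and equi-integrability (from \eqref{bounds:a-priori-viscous-solution} and the linear growth of $\phi^\beta$) give the $\alpha$-average; letting $\theta\to0$ next produces the $\gamma$-average through $\tilde u(s,x,\gamma)$, yielding the two claimed formulas. I expect the main obstacle to be this iterated Young-measure passage: one must secure the equi-integrability and justify inserting the two \emph{separate} limits into the nonlinearity $\phi^\beta$ while the sequences $\{u_\eps\}$ and $\{u_\theta\}$ are evaluated at distinct spatial points $y$ and $x$, the convolution kernel $\varrho_\delta(x-y)$ being what keeps the product structure tractable. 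By comparison, the chain-rule/integration-by-parts recasting of $J_5$ and the mollifier collapses are routine, provided the chain rule is applied only after $\kappa\to0$, where it becomes exact.
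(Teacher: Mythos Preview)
Your proposal is correct and follows essentially the same route as the paper: for $I_6$ you collapse $\delta_0$, then $\kappa$, then $l$ using the Lipschitz bounds on $\phi^\beta$ (the paper records these as $|\phi^\beta(a,b)-\phi^\beta(c,b)|\le C|c-a|$ and $|\phi^\beta(a,b)-\phi^\beta(a,c)|\le C(1+|a-b|)|b-c|$), and then pass to the Young-measure limits in $\eps,\theta$; for $J_5$ the key step in both arguments is the identity $\beta'(u_\eps-k)\nabla_y\phi(u_\eps)=\nabla_y\phi^\beta(u_\eps(s,y),k)$ followed by integration by parts in $y$, after which the same nested limits apply. The only cosmetic difference is that for $J_5$ you send $\kappa\to0$ before $\delta_0\to0$, whereas the paper keeps the stated order $\delta_0\to0$ first and then $\kappa\to0$; since the $\kappa$-limit concerns only $u_\eps\ast\tau_\kappa$ and $\phi(u_\eps)\ast\tau_\kappa$ while the $\delta_0$-limit concerns only the time mollifier $\rho_{\delta_0}(t-s)$, the two commute and nothing is lost.
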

  \begin{proof}
  Let us consider the passage to the limits in $I_6$. To do this, we define,
  \begin{align}
   \mathcal{B}_1=& \E \Big[\int_{\Pi_T}\int_{\Pi_T} \int_{\R}  \phi^\beta(u_{\theta}(t,x),k)\Delta_x \varrho_\delta(x-y)\,\psi(s,y)\,\rho_{\delta_0}(t-s)
J_l(u_\eps\con \tau_\kappa(s,y)-k)\,dk\,dx\,dt\,dy\,ds\Big] \notag \\
-& \E \Big[\int_{\Pi_T}\int_{\R^d} \int_{\R}  \phi^\beta(u_{\theta}(s,x),k)\Delta_x \varrho_\delta(x-y)\psi(s,y)
J_l(u_\eps\con \tau_\kappa(s,y)-k)\,dk\,dx\,dy\,ds\Big] \notag 
\end{align} 
Note that, for all $a, b,c\in \R$,
\begin{align}
|\phi^{\beta}(a,b)-\phi^{\beta}(c,b)|\le C|c-a|. \label{estimatephi}
\end{align}
By using \eqref{estimatephi}, we have
\begin{align}
\mathcal{B}_1
=& \E \Big[\int_{\Pi_T}\int_{\Pi_T} \int_{\R}  \phi^\beta(u_{\theta}(t,x),u_\eps\con \tau_\kappa(s,y)-k)\Delta_x \varrho_\delta(x-y)\,\psi(s,y)\,\rho_{\delta_0}(t-s)
J_l(k)\,dk\,dx\,dt\,dy\,ds\Big] \notag \\
-& \E \Big[\int_{\Pi_T}\int_{\R^d} \int_{\R}  \phi^\beta(u_{\theta}(s,x),u_\eps\con \tau_\kappa(s,y)-k)\Delta_x \varrho_\delta(x-y)\psi(s,y)
J_l(k)\,dk\,dx\,dy\,ds\Big] \notag \\
=& \E \Big[\int_{\Pi_T}\int_{\Pi_T} \int_{\R}  \Big(\phi^\beta \big(u_{\theta}(t,x),u_\eps\con \tau_\kappa(s,y)-k\big)-
 \phi^\beta \big(u_{\theta}(s,x),u_\eps\con \tau_\kappa(s,y)-k\big)\Big)
\Delta_x \varrho_\delta(x-y)\notag \\
& \hspace{3cm} \times \psi(s,y)\,\rho_{\delta_0}(t-s)J_l(k)\,dk\,dx\,dt\,dy\,ds\Big] \notag \\
-& \E \Big[\int_{\Pi_T}\int_{\R^d} \int_{\R}  \phi^\beta \big(u_{\theta}(s,x),u_\eps\con \tau_\kappa(s,y)-k\big)
\psi(s,y) \Big(1-\int_0^T \rho_{\delta_0}(t-s)\,dt\Big)  \notag \\
\mbox{Then}&\hspace{7cm} \times \Delta_x \varrho_\delta(x-y)J_l(k)\,dk\,dx\,dy\,ds\Big]. \notag \\
|\mathcal{B}_1|
\le & C \E \Big[\int_{s=\delta_0}^T \int_{\Pi_T}\int_{\R^d} \int_{\R}|u_{\theta}(t,x)-u_{\theta}(s,x)|
|\Delta_x \varrho_\delta(x-y)| \psi(s,y)\,\rho_{\delta_0}(t-s)J_l(k)\,dk\,dx\,dt\,dy\,ds\Big] \notag \\
& \hspace{6cm}+ \mathcal{O}(\delta_0)\notag \\
\le & C \E \Big[\int_{s=\delta_0}^T \int_{0}^T\int_{K_\delta} |u_{\theta}(t,x)-u_{\theta}(s,x)|\rho_{\delta_0}(t-s)
dx\,dt\,ds\Big]  + \mathcal{O}(\delta_0)\notag \\
\le & C \E \Big[\int_{r=0}^1 \int_{0}^T\int_{K_\delta}  |u_{\theta}(t+\delta_0\,r,x)-u_{\theta}(t,x)|\rho(-r)
dx\,dt\,dr\Big] + \mathcal{O}(\delta_0)\notag 
 \end{align}
where $K_\delta \subset \R^d$ is a compact set depending on $\psi$ and $\delta$.
\\
  Note that, $\underset{\delta_0\downarrow 0 }\lim\, \int_0^T\int_{K_\delta} |u_{\theta}(t+\delta_0 r,x)-u_{\theta}(t,x)|\,dx\,dt \rightarrow 0$ 
  almost surely for all $ r\in [0,1] $. Therefore, by dominated convergence theorem, we have
\begin{align*}
 \lim_{\delta_0 \goto 0} I_6= \E \Big[\int_{\Pi_T}\int_{\R^d} \int_{\R}  \phi^\beta(u_{\theta}(s,x),k)\Delta_x \varrho_\delta(x-y)\psi(s,y)
J_l(u_\eps\con \tau_\kappa(s,y)-k)\,dk\,dx\,dy\,ds\Big].
\end{align*}
Moreover, one can use the property of convolution to conclude
\begin{align*}
\lim_{\kappa \goto 0} \lim_{\delta_0 \goto 0} I_6= \E \Big[\int_{\Pi_T}\int_{\R^d} \int_{\R}  \phi^\beta(v(s,x),k)\Delta_x \varrho_\delta(x-y)\psi(s,y)
J_l(u_\eps(s,y)-k)\,dk\,dx\,dy\,ds\Big].
\end{align*}
\vspace{.2cm}

Passage to the limit as $l\goto 0$: 
\begin{align}
 \text{Let,}~~\mathcal{B}_2:= & \E \Big[\int_{\Pi_T}\int_{\R^d} \int_{\R}  \phi^\beta(u_{\theta}(s,x),k)\Delta_x \varrho_\delta(x-y)\psi(s,y)
J_l(u_\eps(s,y)-k)\,dk\,dx\,dy\,ds\Big]\notag \\
-&  \E \Big[\int_{\Pi_T}\int_{\R^d}  \phi^\beta(u_{\theta}(s,x),u_\eps(s,y))\Delta_x \varrho_\delta(x-y)\psi(s,y)\,dx\,dy\,ds\Big] \notag \\
&= \E \Big[\int_{\Pi_T}\int_{\R^d} \int_{\R} \Big( \phi^\beta(u_{\theta}(s,x),k)- \phi^\beta(u_{\theta}(s,x),u_\eps(s,y))\Big)\Delta_x \varrho_\delta(x-y)\psi(s,y)\notag \\
 & \hspace{3cm} \times J_l(u_\eps(s,y)-k)\,dk\,dx\,dy\,ds\Big].\notag 
\end{align}
Note that for all $a, b,c\in \R$,
\begin{align}
|\phi^{\beta}(a,b)-\phi^{\beta}(a,c)|\le C(1+|a-b|)|b-c|. \label{estimatephi1}
\end{align}
Therefore, by \eqref{estimatephi1} we have
\begin{align}
 |\mathcal{B}_2|\le & C\,\E \Big[\int_{\Pi_T}\int_{\R^d} \int_{\R} \big(1+ |u_{\theta}(s,x)-k|\big)|u_\eps(s,y)-k|\, |\Delta_x \varrho_\delta(x-y)|\psi(s,y)\notag \\
 & \hspace{3cm} \times J_l(u_\eps(s,y)-k)\,dk\,dx\,dy\,ds\Big]\notag \\
 & \le C\,l\,\Big\{ 1+ \Big(\sup_{\theta>0}\sup_{t>0} \E\big[||u_{\theta}(t)||_2^2\big]\Big)^\frac{1}{2}
 + \Big(\sup_{\eps>0}\sup_{t>0} \E\big[||u_\eps(t)||_2^2\big]\Big)^\frac{1}{2}\Big\} 
 \longrightarrow 0 \quad \text{as}\quad l \goto 0.\notag
\end{align}
\vspace{.2cm}

One can justify the passage to the limit as $\eps \goto 0$ and $\theta \goto 0$ in the sense of Young measures
as in \cite{BaVaWit_2012,BisMajKarl_2014}  and  conclude 
\begin{align}
 & \lim_{\theta \rightarrow 0}   \lim_{\eps\rightarrow 0}  \E \Big[\int_{\Pi_T}\int_{\R^d}  \phi^\beta \big(u_{\theta}(s,x), u_\eps(s,y)\big)\Delta_x \varrho_\delta(x-y)\psi(s,y)\,dx\,dy\,ds\Big]\notag \\
 =&  \E \Big[\int_{\Pi_T}\int_{\R^d}\int_0^1 \int_0^1 \phi^\beta \big(\tilde{u}(s,x,\gamma),u(s,y,\alpha)\big)
 \Delta_x \varrho_\delta(x-y)\psi(s,y)\,d\gamma\,d\alpha\,dx\,dy\,ds\Big].\notag
\end{align}
This proves the first part of the lemma.
\vspace{.1cm}

To prove the second part, let us recall that 
\begin{align*}
 J_5:= - \E \Big[\int_{\Pi_T}\int_{\Pi_T} \int_{\R}\beta^\prime(u_\eps\con \tau_\kappa-k) \grad(\phi(u_\eps)\con \tau_\kappa) 
  \grad_y \varphi_{\delta,\delta_0}  J_l(u_{\theta}(t,x)-k)\,dk\,dx\,dt\,dy\,ds\Big]
\end{align*}
A classical properties of Lebesgue points and convolution yields 
\begin{align*}
 \lim_{\kappa \goto 0}\lim_{\delta_0 \goto 0} J_5 = &
 - \E \Big[\int_{\Pi_T}\int_{\R^d} \int_{\R}\beta^\prime(u_\eps(s,y)-k) \grad \phi(u_\eps(s,y))\cdot 
  \grad_y \big[\psi(s,y)\varrho_{\delta}(x-y)\big] \notag \\
  & \hspace{3cm} \times J_l \big(u_{\theta}(s,x)-k\big)\,dk\,dx\,dy\,ds\Big].
\end{align*}
Recall that $\phi^{\beta}(a,b)=\int_b^a \beta^{\prime}(s-b)\phi^\prime(s)\,ds$. Making use of Green's type formula along with 
Young measure theory and keeping in mind that $u(s,y,\alpha)$ and $\tilde{u}(s,x,\gamma)$ are Young measure-valued limit
processes associated to the sequences $\{u_{\eps}(s,y)\}_{\eps>0}$ and $\{u_{\theta}(s,x)\}_{\theta>0}$ respectively, we arrive at
the following conclusion 
\begin{align*}
 &\lim_{\theta \downarrow 0} \lim_{\eps\downarrow 0} \lim_{l\downarrow 0} \lim_{\kappa \goto 0} \lim_{\delta_0 \goto 0} J_5 \\
   &= \E\Big[\int_{\Pi_T}\int_{\R^d} \int_{0}^1 \int_0^1\phi^\beta\big(u(s,y,\alpha),\tilde{u}(s,x,\gamma)\big)
  \Delta_y \big[\psi(s,y)\varrho_\delta(x-y)\big] \,d\gamma\,d\alpha \,dx\,dy\,ds\Big].\notag
\end{align*}
This completes the proof of the lemma. 
\end{proof}
 Next, we want to pass to the limits in $ \big(J_6 + J_7\big)$ and $I_5$ respectively. 
 A slight modification of the similar argument as in \cite{BaVaWit_2014,BaVaWit_2012,BisMajKarl_2014} yields the following lemma:
 \begin{lem} \label{stochastic_lemma_2}
   It holds that
 \begin{align*}
 &\lim_{l\goto 0} \lim_{\kappa \goto 0}\lim_{\delta_0 \goto 0}  \big(J_6+ J_7 \big) \\
  =& -\E \Big[\int_{\Pi_T}\int_{\R^d} F^\beta(u_\eps(s,y),u_\theta(s,x))\cdot \grad_y [ \psi(s,y) \varrho_\delta(x-y)]\,dx\,dy\,ds\Big] \\
  \underset{\eps \goto 0}\longrightarrow & -\E \Big[\int_{\Pi_T}\int_{\R^d}\int_0^1  F^\beta(u(s,y,\alpha),u_\theta(s,x)) \cdot\grad_y \big[ \psi(s,y)
 \varrho_\delta(x-y)\big]\,d\alpha\,dx\,dy\,ds\Big] \\
 \underset{\theta \goto 0}\longrightarrow & -\E \Big[\int_{\Pi_T}\int_{\R^d}\int_0^1 \int_0^1  F^\beta \big(u(s,y,\alpha),\tilde{u}(s,x,\gamma)\big) 
 \cdot\grad_y \big[\psi(s,y)\varrho_\delta(x-y)\big]\,d\gamma\, d\alpha\,dx\,dy\,ds\Big],
 \end{align*}
 and 
 \begin{align}
 &\lim_{\theta \goto 0} \lim_{\eps \goto 0} \lim_{l\goto 0} \lim_{\kappa \goto 0}\lim_{\delta_0 \goto 0}  I_5\\
 = & -\E \Big[\int_{\Pi_T}\int_{\R^d} \int_0^1 \int_0^1 F^\beta \big(\tilde{u}(s,x,\gamma),u(s,y,\alpha)\big) \cdot
 \grad_x \big[\varrho_\delta(x-y)\,\psi(s,y)\big]\,d\gamma \, d\alpha\,dx\,dy\,ds\Big]. \notag 
 \end{align}
\end{lem}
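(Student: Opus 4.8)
The plan is to handle $(J_6+J_7)$ and $I_5$ separately. Both are purely convective contributions that do not interact with the L\'evy noise, so morally the work is the classical Kruzkov entropy--flux manipulation carried out under the expectation $\E$, followed by the standard chain of limits $\delta_0\goto0$, $\kappa\goto0$, $l\goto0$, and finally the two Young--measure passages $\eps\goto0$, $\theta\goto0$.

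For $(J_6+J_7)$ the first move is algebraic and is the heart of the matter. After sending $\delta_0\goto0$ (so that $\rho_{\delta_0}(t-s)$ acts as an approximate identity and collapses $u_\theta(t,x)$ onto $u_\theta(s,x)$, using the a.s.\ $L^1_{\mathrm{loc}}$ temporal continuity of the viscous solution already exploited in the estimate of $\mathcal{B}_1$) and $\kappa\goto0$ (so that $u_\eps\con\tau_\kappa\goto u_\eps$ and $\grad(u_\eps\con\tau_\kappa)\goto\grad u_\eps$ strongly in $L^2$, while $f(u_\eps)\con\tau_\kappa\goto f(u_\eps)$, which is legitimate because $u_\eps\in H^1(\rd)$), the two integrands become $\beta^\prime(u_\eps-k)f(u_\eps)\cdot\grad_y\varphi_{\delta,\delta_0}$ and $\beta^{\prime\prime}(u_\eps-k)f(u_\eps)\cdot\grad_y u_\eps\,\varphi_{\delta,\delta_0}$. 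Integrating the first by parts in $y$ produces, via $\grad_y\!\cdot[\beta^\prime(u_\eps-k)f(u_\eps)]=\beta^{\prime\prime}(u_\eps-k)f(u_\eps)\cdot\grad_y u_\eps+\beta^\prime(u_\eps-k)f^\prime(u_\eps)\cdot\grad_y u_\eps$, a $\beta^{\prime\prime}$-piece that exactly annihilates the second integrand; the residual $\beta^\prime(u_\eps-k)f^\prime(u_\eps)\cdot\grad_y u_\eps$ is the perfect divergence $\grad_y\!\cdot[F^\beta(u_\eps,k)]$, so a further integration by parts transfers the derivative back onto the test function and gives $J_6+J_7=-\E\big[\int F^\beta(u_\eps,k)\cdot\grad_y\varphi_{\delta,\delta_0}\,J_l(u_\theta(s,x)-k)\,dk\,\ldots\big]$. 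This cancellation is nothing but the relation $\zeta^\prime=\beta^\prime f^\prime$ built into the entropy triple.

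Next I would send $l\goto0$: since $k\mapsto F^\beta(u_\eps,k)$ is continuous and $J_l(u_\theta-k)\,dk$ concentrates at $k=u_\theta$, the $k$-integral collapses to $F^\beta(u_\eps(s,y),u_\theta(s,x))$, yielding the first displayed identity. The passages $\eps\goto0$ and then $\theta\goto0$ are performed in the sense of Young measures, replacing $u_\eps(s,y)$ by $u(s,y,\alpha)$ (with an extra $\int_0^1 d\alpha$) and $u_\theta(s,x)$ by $\tilde{u}(s,x,\gamma)$ (with $\int_0^1 d\gamma$), through the narrow convergence against the limit Young measures together with the continuity and at most linear growth of $F^\beta$; the uniform bound $\sup_{\eps}\sup_{t}\E[\|u_\eps(t)\|_2^2]<\infty$ from Theorem~\ref{thm:existec-viscous-apriori} provides the equi-integrability required to pass the unbounded continuous integrand to the limit.

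For $I_5$ no algebra is needed, since it already carries the entropy flux $F^\beta(u_\theta(t,x),k)$ coming directly from the flux part of the It\^o--L\'evy formula for $u_\theta$, with the spatial derivative $\grad_x\varrho_\delta(x-y)$ on the test function. I would therefore simply run the same chain of limits: $\delta_0\goto0$ collapses $u_\theta(t,x)$ onto $u_\theta(s,x)$, $\kappa\goto0$ removes the mollifier inside $J_l(u_\eps\con\tau_\kappa-k)$, $l\goto0$ collapses the $k$-integral to $F^\beta(u_\theta(s,x),u_\eps(s,y))$, and the two Young--measure limits produce the stated expression. The main obstacle is not the algebra but precisely these last two limits: one must justify that the composition of the continuous, linearly growing flux $F^\beta$ with the viscous solutions converges against the narrow Young--measure limits, which is where the a priori $L^2$ estimates and the equi-integrability argument of \cite{BaVaWit_2012,BisMajKarl_2014} are indispensable; the $\delta_0\goto0$ collapse, which rests on $L^1$-in-time continuity of solutions whose paths may jump, is the other delicate point.
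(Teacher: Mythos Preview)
Your proposal is correct and follows precisely the approach the paper has in mind when it defers to \cite{BaVaWit_2014,BaVaWit_2012,BisMajKarl_2014}: the key algebraic step is the cancellation between the $\beta''$-piece produced by integrating $J_6$ by parts in $y$ and the term $J_7$, leaving the perfect divergence $\mbox{div}_y F^\beta(u_\eps,k)$, after which the chain $\delta_0\to0$, $\kappa\to0$, $l\to0$ and the two Young-measure passages proceed exactly as you describe. Your remark that the integration by parts must be performed \emph{after} $\kappa\to0$ (so that $f(u_\eps)\con\tau_\kappa$ becomes $f(u_\eps)$ and no commutator term survives) and that the $\delta_0\to0$ collapse for $J_6,J_7$ is actually easier than for $\mathcal{B}_1$ (since $u_\theta$ appears only inside the bounded function $J_l$) are both on point.
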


In view of the above, we now  want to pass the limit as $(\vartheta,\delta) \longrightarrow(0,0)$. We use similar line argument as in the proof 
 of the second part of the Lemmas $5.7$ and $5.8$ in \cite{BisMajKarl_2014} and arrive at the following lemma.
\begin{lem} \label{stochastic_lemma_3} 
  Assume  that $ \vartheta \goto 0, \delta \goto 0$ and $\frac{\vartheta}{\delta} \goto 0$, then
  \begin{align*}
   & \lim_{ \vartheta \goto 0} \lim_{\delta \goto 0 }\lim_{\frac{\vartheta}{\delta} \goto 0} \lim_{\theta \downarrow 0}  \lim_{\eps \downarrow 0}
   \lim_{l \goto 0} \lim_{\kappa \goto 0} \lim_{\delta_0 \goto 0} \Big[ \big(J_6 +J_7\big)+ I_5 \Big]\\
    &= - \E\Big[\int_{\Pi_T}\int_0^1 \int_0^1 F\big(u(s,y,\alpha), \tilde{u}(s,y,\gamma)\big) \cdot \grad_y \psi(s,y)\,d\gamma\,d\alpha\,dy\,ds\Big].
  \end{align*}
\end{lem}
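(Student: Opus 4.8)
The plan is to start from Lemma~\ref{stochastic_lemma_2}, which already records the values of $\lim(J_6+J_7)$ and $\lim I_5$ once the inner limits $\delta_0,\kappa,l,\eps,\theta\to0$ have been carried out, and then to organize the remaining joint passage $(\vartheta,\delta)\to(0,0)$ (along any regime with $\vartheta/\delta\to0$) into a \emph{diagonal} part and a \emph{commutator} part. Abbreviating $a=u(s,y,\alpha)$ and $b=\tilde u(s,x,\gamma)$, and using that $\psi=\psi(s,y)$ is independent of $x$, so that $\grad_x\varrho_{\delta}(x-y)=-\grad_y\varrho_{\delta}(x-y)$, I would expand $\grad_y[\psi\,\varrho_{\delta}(x-y)]=\varrho_{\delta}(x-y)\grad_y\psi+\psi\,\grad_y\varrho_{\delta}(x-y)$ and $\grad_x[\varrho_{\delta}(x-y)\psi]=-\psi\,\grad_y\varrho_{\delta}(x-y)$, then add the two limits from Lemma~\ref{stochastic_lemma_2}. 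The sum is the diagonal term $-\E\big[\int F^\beta(a,b)\,\varrho_{\delta}(x-y)\,\grad_y\psi(s,y)\,d\gamma\,d\alpha\,dx\,dy\,ds\big]$ plus the commutator term
\[
-\E\Big[\int_{\Pi_T}\int_{\R^d}\int_0^1\int_0^1\big(F^\beta(a,b)-F^\beta(b,a)\big)\,\psi(s,y)\,\grad_y\varrho_{\delta}(x-y)\,d\gamma\,d\alpha\,dx\,dy\,ds\Big].
\]

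For the diagonal term, $\varrho_{\delta}(x-y)$ is an approximate identity in $x$ about $y$, so sending $\delta\to0$ collapses the $x$-integration onto $x=y$; combined with the Young-measure representation of the limit processes and the standard Lebesgue-point/continuity-of-translations arguments (as in the proof of the second parts of Lemmas~$5.7$–$5.8$ of \cite{BisMajKarl_2014}), this replaces $b=\tilde u(s,x,\gamma)$ by $\tilde u(s,y,\gamma)$. Simultaneously, since $\beta=\beta_\vartheta$ gives $\beta_\vartheta'(r)=\mathrm{sign}(r)$ for $|r|>\vartheta$ with $|\beta_\vartheta'|\le1$, one has the uniform bound $|F^\beta(a,b)-F(a,b)|\le 2c_f\vartheta$, because $\beta_\vartheta'(\sigma-b)-\mathrm{sign}(\sigma-b)$ is supported on $|\sigma-b|\le\vartheta$ and $|f'|\le c_f$. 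Hence the $\vartheta$-error in the diagonal term is at most $O(\vartheta)\,\|\varrho_{\delta}\|_{L^1}\|\grad_y\psi\|_{L^1}=O(\vartheta)\to0$, and the diagonal term converges to the desired right-hand side $-\E\big[\int_{\Pi_T}\int_0^1\int_0^1 F(u(s,y,\alpha),\tilde u(s,y,\gamma))\cdot\grad_y\psi(s,y)\,d\gamma\,d\alpha\,dy\,ds\big]$.

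The main obstacle is the commutator term, and this is exactly where the hypothesis $\vartheta/\delta\to0$ is used. The decisive observation is that the Kruzkov flux is symmetric, $F(a,b)=F(b,a)$, so that writing $F^\beta(a,b)-F^\beta(b,a)=[F^\beta(a,b)-F(a,b)]-[F^\beta(b,a)-F(b,a)]$ and invoking the bound above gives $|F^\beta(a,b)-F^\beta(b,a)|\le 4c_f\vartheta$ \emph{pointwise and uniformly} in $a,b$. This uniform smallness is essential, since the Young-measure supports give no $L^\infty$ control on $u,\tilde u$. The delicate competing factor is the size of the mollifier gradient: $\int_{\R^d}|\grad_y\varrho_{\delta}(x-y)|\,dx=\|\grad\varrho\|_{L^1(\R^d)}\,\delta^{-1}$, which blows up like $\delta^{-1}$. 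Combining the two, the commutator term is bounded in absolute value by
\[
4c_f\vartheta\,\cdot\,\|\grad\varrho\|_{L^1(\R^d)}\,\delta^{-1}\,\E\Big[\int_{\Pi_T}|\psi(s,y)|\,dy\,ds\Big]=C\,\frac{\vartheta}{\delta}\longrightarrow 0,
\]
the last integral being finite as $\psi$ is deterministic with compact support. Thus only the $O(\vartheta)$ smallness of the flux difference, guaranteed by the symmetry of $F$ together with the localization of $\beta_\vartheta'-\mathrm{sign}$ on $[-\vartheta,\vartheta]$, can beat the $\delta^{-1}$ growth of $\grad_y\varrho_{\delta}$, and it does so precisely under $\vartheta/\delta\to0$. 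Adding the vanishing commutator to the diagonal limit yields the claimed identity.
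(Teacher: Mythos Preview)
Your argument is correct and follows exactly the standard route the paper defers to \cite[Lemmas~5.7--5.8]{BisMajKarl_2014}: split into the diagonal piece carrying $\varrho_\delta\,\grad_y\psi$ and the commutator piece carrying $\psi\,\grad_y\varrho_\delta$, use the symmetry $F(a,b)=F(b,a)$ together with $|F^{\beta_\vartheta}-F|\le C\vartheta$ to bound the commutator by $C\vartheta/\delta$, and handle the diagonal term by continuity of translations plus $|F^{\beta_\vartheta}-F|=O(\vartheta)$. There is nothing to add.
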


Let us consider the term $I_7 + J_8$. Regarding this, we have the following
\begin{lem}\label{stochastic_lemma_4} For fixed $\delta > 0$ and $\beta$, it holds that
 \begin{align}
  \limsup_{( \theta, \eps,l,\kappa,\delta_0,)\rightarrow 0}\,  \big|I_7 + J_8 \big| = 0.
 \end{align} 
\end{lem}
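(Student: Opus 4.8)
The plan is to exploit the explicit viscosity prefactors $\theta$ and $\eps$ that multiply $I_7$ and $J_8$ respectively, and to balance them against the only available control on the spatial gradients of the viscous solutions, namely the \textit{a priori} bound of Theorem \ref{thm:existec-viscous-apriori}. That estimate gives $\eps\,\E\big[\int_0^T\|\grad u_\eps\|_2^2\,ds\big]\le C$, and the same for $u_{\theta}$ with $\theta$ in place of $\eps$; thus $\|\grad u_\eps\|_{L^2(\Omega\times\Pi_T)}$ may grow only like $\eps^{-1/2}$, so that after multiplication by the prefactor $\eps$ a net power $\eps^{1/2}$ survives and vanishes in the limit. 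These are in fact the most benign terms of the doubling inequality, precisely because the vanishing-viscosity weights are present explicitly.

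First I would estimate $I_7$. Since $\int_\R J_l(u_\eps\con\tau_\kappa(s,y)-k)\,dk=1$ and $|\beta^\prime|\le1$ by the construction of $\beta$, and since $\grad_x\varphi_{\delta,\delta_0}(t,x,s,y)=\rho_{\delta_0}(t-s)\,\grad\varrho_\delta(x-y)\,\psi(s,y)$, one gets
\[
|I_7|\le\theta\,\E\Big[\int_{\Pi_T}\int_{\Pi_T}|\grad_x u_{\theta}(t,x)|\,\rho_{\delta_0}(t-s)\,|\grad\varrho_\delta(x-y)|\,\psi(s,y)\,dx\,dt\,dy\,ds\Big].
\]
Integrating in $(s,y)$ first and using $\int_0^T\rho_{\delta_0}(t-s)\,ds\le1$, the inner integral $\int_{\R^d}|\grad\varrho_\delta(x-y)|\,\psi(s,y)\,dy$ is bounded by $\|\psi\|_\infty\|\grad\varrho_\delta\|_{L^1(\R^d)}=C\delta^{-1}\|\psi\|_\infty$ and, as a function of $x$, supported in a fixed compact set $K_\delta$ determined by $\supp\psi$ and $\delta$; this is the step where the compact support of $\psi$, propagated through $\varrho_\delta$, is used. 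A Cauchy--Schwarz inequality in $x$ over $K_\delta$, then in $t$, and finally under $\E$, turns the $L^1$-gradient into the controlled $L^2$-norm and gives
\[
|I_7|\le C_\delta\,\theta\,\Big(\E\Big[\int_0^T\|\grad u_{\theta}\|_2^2\,dt\Big]\Big)^{1/2}\le C_\delta\,\theta\cdot C\,\theta^{-1/2}=C_\delta\,\sqrt{\theta},
\]
where $C_\delta$ depends on $\delta$, $\psi$, $T$ and the \textit{a priori} constant, but \emph{not} on $\eps,l,\kappa,\delta_0$, since $J_l$, $\rho_{\delta_0}$ and the convolution contribute only factors bounded uniformly by $1$.

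The estimate for $J_8$ is entirely parallel, with the roles of $u_{\theta}$ and $u_\eps\con\tau_\kappa$ interchanged, $\grad_y$ in place of $\grad_x$, and prefactor $\eps$ in place of $\theta$. Two harmless differences should be noted: $\grad_y\varphi_{\delta,\delta_0}$ produces the two terms $-\grad\varrho_\delta(x-y)\psi(s,y)$ and $\varrho_\delta(x-y)\grad\psi(s,y)$, both bounded for fixed $\delta$; and convolution is an $L^2$-contraction, so $\|\grad(u_\eps\con\tau_\kappa)\|_{L^2}\le\|\grad u_\eps\|_{L^2}$, which keeps the estimate independent of $\kappa$. Invoking $\eps\,\E\big[\int_0^T\|\grad u_\eps\|_2^2\,dt\big]\le C$ then yields $|J_8|\le C_\delta\,\sqrt{\eps}$. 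Combining the two bounds gives $|I_7+J_8|\le C_\delta(\sqrt{\theta}+\sqrt{\eps})$ for each fixed $\delta>0$; since $\theta\to0$ and $\eps\to0$ in the iterated limit under consideration, the right-hand side tends to $0$, which proves the lemma. The only point requiring care — and hence the nearest thing to an obstacle — is the bookkeeping that makes the estimates \emph{uniform} in the auxiliary parameters $l,\kappa,\delta_0$ (each entering only through a factor bounded by $1$), so that the nested $\limsup$ genuinely collapses to the behaviour as $\theta,\eps\to0$.
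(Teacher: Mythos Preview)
Your proposal is correct and follows essentially the same route as the paper: bound $|\beta'|$, integrate out $J_l$ and $\rho_{\delta_0}$, then apply Cauchy--Schwarz together with the \textit{a priori} estimate $\eps\,\E\!\int_0^T\|\grad u_\eps\|_2^2\,dt\le C$ (resp.\ with $\theta$) to obtain $|J_8|\le C(\beta,\psi,\delta)\,\eps^{1/2}$ and $|I_7|\le C(\beta,\psi,\delta)\,\theta^{1/2}$. Your additional remarks on the uniformity in $l,\kappa,\delta_0$ and on the $L^2$-contraction property of convolution (to make the $J_8$ bound $\kappa$-independent) are exactly the bookkeeping details that the paper leaves implicit.
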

 \begin{proof}
 Note that 
  \begin{align}
|J_8| &\le \eps ||\beta^\prime||_{\infty} \E\Big[\int_{\Pi_T}\int_{\R^d}|\grad_y (u_\eps\con \tau_\kappa(s,y))| |\grad_y[\psi(s,y)
\varrho_\delta(x-y)|\,dy\,dx\,ds\Big]\notag \\
&\le\eps  ||\beta^\prime||_{\infty} \E\Big[\int_{|y|\le K}\int_{0}^T\int_{\R^d} \,|\grad_y( u_\eps \con \tau_\kappa(t,y))|\,|\grad_y[\psi(t,y)\varrho_\delta(x-y)]|\,dx\,dt\,dy\Big]\notag \\
& (\text{By Cauchy-Schwartz inequality}),\notag \\
& \le C(\beta) \eps^{\frac 12}\Big(\E\Big[\int_{\Pi_T} \eps |\grad_y( u_\eps \con \tau_\kappa(t,y))|^2 \,dy\,dt\Big]\Big)^\frac{1}{2}
\Big( \E\Big[\int_{K}\int_{\Pi_T} |
\grad_y[\psi(t,y)\varrho_\delta(x-y)]|^2\,dx\,dt\,dy\Big]\Big)^\frac{1}{2}\notag \\
& \le  C(\beta,\psi,\delta)\,\eps^{\frac{1}{2}} \Big(\sup_{\eps>0} \E\Big[|\eps \int_0^T \int_{\R^d} |\grad_y u_\eps(t,y)|^2 \,dy\,dt|\Big]\Big)^\frac{1}{2}
\le  C(\beta,\psi,\delta)\,\eps^{\frac{1}{2}}.\notag
\end{align}
Similarly, we have $|I_7| \le C(\beta,\psi,\delta)\,\theta^{\frac{1}{2}}$. This completes the proof.
\end{proof}

 Next we consider the stochastic terms $ I_3 + J_3$. To this end, we cite \cite{BisMajKarl_2014}  and assert that for two constants $T_1, T_2\ge 0$ with $T_1<T_2$,

 \begin{align}
  \E \Big[X_{T_1}\int_{T_1}^{T_2} \int_{E}J(t,z)\tilde{N}(dz,dt)\Big] = 0\label{eq:conditional_indep}
\end{align}
where $J$ is a predictable integrand and $X$ is an adapted process.

For each $\beta \in C^\infty(\R)$ with $\beta^\prime, 
 \beta^{\prime\prime}\in C_b(\R)$ and nonnegative $\varphi\in C_c^\infty(\Pi_{\infty}\times \Pi_\infty)$, we define 
 \begin{align*}
 M[\beta, \varphi](s; y, v):=& \int_0^T\int_{E}\int_{\R^d} \Big\{\beta\big(u_{\theta}(r,x) +\eta(x,u_{\theta}(r,x);z)-v\big)
 -\beta \big(u_{\theta}(r,x)-v\big)\Big\}\\
 & \hspace{4cm} \times \varphi(r,x,s,y)\,dx\, \tilde{N}(dz,dr)
 \end{align*}
 where  $0\le s\le T$, $(y,v)\in \R^d \times \R$. Furthermore, we extend the process $u_\eps \con \tau_\kappa(\cdot,y)$ for negative time simply 
by $u_\eps \con \tau_\kappa(s,y) = u_{\eps}(0,\cdot)\con \tau_k (y)$ if $s< 0$.
With this convention, it follows from \eqref{eq:conditional_indep} that      
 \begin{align*}
 \E \Big[\int_{\R_v}\int_{\Pi_{T}} M[\beta,\varphi_{\delta,\delta_0}](s;y,v)\,J_l(u_\eps(s-\delta_0,y)-v)\,dy\,ds\,dv\Big]=0
 \end{align*}
and hence we have $J_3=0$ and 
 \begin{align}
  I_3 &= \E \Big[\int_{\R}\int_{\Pi_{T}} M[\beta,\varphi_{\delta,\delta_0}](s;y,v)\Big(J_l(u_\eps\con \tau_\kappa(s,y)-v)
  -J_l(u_\eps \con \tau_\kappa(s-\delta_0,y)-v)\Big)\,dy\,ds\,dv\Big]. \label{stochastic_estimate_6}
 \end{align} 
\vspace{.3cm}

Our aim is to pass to the limit  into the stochastic terms $I_3 + J_3$. For that, we need some estimate regarding
$M[\beta,\varphi_{\delta,\delta_0}]$, a proof of which could be found in \cite{BisMajKarl_2014}.

 \begin{lem} \label{lem:L-infinity estimate}
  Let $\gamma\in C^\infty(\R)$ be function such that $\gamma^\prime \in C_c^\infty(\R) $  and $p$ be a
  positive integer of the form $p= 2^k$ for some $k\in \mathbb{N}$.
  If $p\geq d+3$ then there exists a constant $C=C(\gamma^\prime, \psi,\delta )$
  \begin{align}
  \sup_{0\le s\le T}\Bigg( \E\Big[|| M[\gamma,\varphi_{\delta,\delta_0}](s;\cdot,\cdot)||_{L^\infty(\R^d\times \R)}^2\Big]\Bigg) \le
   \frac{C(\gamma', \psi,\delta )}{\delta_0^\frac{2(p-1)}{p}}.\label{eq:l-infinity bound}
  \end{align} and the following identities hold:
    \begin{align*}
     &\partial_v  M[\gamma, \varphi](s; y, v) =  M[-\gamma^\prime, \varphi](s; y, v)\\
     & \partial_{y_k}  M[\gamma, \varphi](s; y, v) =  M[\gamma, \partial_{y_k}\varphi](s; y, v).
    \end{align*}
 \end{lem}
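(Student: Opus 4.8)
The plan is to obtain the pointwise-in-$(y,v)$ supremum bound from a Sobolev embedding combined with moment estimates for the Poisson stochastic integral defining $M$, viewing $M[\gamma,\varphi_{\delta,\delta_0}](s;\cdot,\cdot)$ as a random field on $\R^d\times\R=\R^{d+1}$. The two identities come first and are essentially free: in $M[\gamma,\varphi]$ the variable $v$ enters only through the shifts inside $\gamma$ and $y$ only through $\varphi$, so since $\gamma,\gamma'$ are smooth with $\gamma'\in C_c^\infty$ and $\varphi$ is smooth and compactly supported one may differentiate under the $dx$-integral and under the $\tilde N(dz,dr)$-integral (the latter justified by the It\^o--L\'evy isometry applied to the difference quotients). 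Differentiating the bracket $\gamma(u_{\theta}+\eta-v)-\gamma(u_{\theta}-v)$ in $v$ gives $-[\gamma'(u_{\theta}+\eta-v)-\gamma'(u_{\theta}-v)]$, i.e. $\partial_v M[\gamma,\varphi]=M[-\gamma',\varphi]$, while $\partial_{y_k}$ acts only on $\varphi$, giving $\partial_{y_k}M[\gamma,\varphi]=M[\gamma,\partial_{y_k}\varphi]$.

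For the supremum estimate I would invoke Morrey's embedding $W^{1,p}(\R^{d+1})\hookrightarrow L^\infty(\R^{d+1})$, which applies since $p\ge d+3>d+1=\dim(\R^d\times\R)$. Hence $\|M(s)\|_{L^\infty}^2\le C\|M(s)\|_{W^{1,p}}^2$, and because $2/p\le1$ Jensen's inequality yields
\[ \E\big[\|M(s)\|_{L^\infty}^2\big]\le C\Big(\int_{\R^{d+1}}\E\big[|M|^p+|\partial_vM|^p+\textstyle\sum_{k}|\partial_{y_k}M|^p\big]\,dy\,dv\Big)^{2/p}. \]
By the identities the derivative terms are again of the form $M[-\gamma',\cdot]$ and $M[\gamma,\partial_{y_k}\varphi]$, so it is enough to show, uniformly in $s$, that $\int_{\R^{d+1}}\E[|M[\beta,\varphi_{\delta,\delta_0}]|^p]\,dy\,dv\le C\delta_0^{-(p-1)}$ for the relevant integrands $\beta\in\{\gamma,-\gamma'\}$; the claimed exponent $2(p-1)/p$ is then exactly the $2/p$-power of this.

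Writing $M[\beta,\varphi_{\delta,\delta_0}]=\int_0^T\int_E G\,\tilde N(dz,dr)$ with $G=\int_{\R^d}[\beta(u_{\theta}+\eta-v)-\beta(u_{\theta}-v)]\varphi_{\delta,\delta_0}\,dx$, Kunita's inequality gives for $p=2^k\ge2$
\[ \E\big[|M[\beta,\varphi]|^p\big]\le C\,\E\Big[\Big(\int_0^T\!\!\int_E|G|^2\,m(dz)\,dr\Big)^{p/2}\Big]+C\,\E\Big[\int_0^T\!\!\int_E|G|^p\,m(dz)\,dr\Big]. \]
All of the $\delta_0$-dependence sits in the factor $\rho_{\delta_0}(r-s)$; the substitution $r=s+\delta_0\tau$ turns $\int_0^T\rho_{\delta_0}(r-s)^p\,dr$ into $\delta_0^{-(p-1)}\|\rho\|_{L^p}^p$ and $\int_0^T\rho_{\delta_0}(r-s)^2\,dr$ into $\delta_0^{-1}\|\rho\|_{L^2}^2$, so the $p$-th moment term scales like $\delta_0^{-(p-1)}$ and the quadratic term like $\delta_0^{-p/2}$; since $p\ge2$ the former dominates and reproduces the stated power. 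To see that the remaining prefactors integrate to a constant independent of $\delta_0$, I would move $|\cdot|^p$ inside via Jensen against the probability kernel $\varrho_\delta(x-y)\,dx$, reducing the $v$-integral to $\int_\R|\gamma(a+\eta)-\gamma(a)|^p\,da=\|\gamma'\ast\mathbf 1_{[-\eta,0]}\|_{L^p}^p$; applying Young's inequality in the two ways $\|\gamma'\ast\mathbf 1_{[-\eta,0]}\|_{L^p}\le\|\gamma'\|_{L^1}|\eta|^{1/p}$ and $\le\|\gamma'\|_{L^p}|\eta|$ gives the bound $C\min(|\eta|,|\eta|^p)\le C|\eta|^2$. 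By \ref{A5} then $\int_\R|\cdots|^p\,dv\le C\,g(x)^2(1+|u_{\theta}|)^2 h_2(z)^2$, whose $z$-integral is the finite factor $\|h_2\|_{L^2(E,m)}^2$; finally $g\in L^2\cap L^\infty$, the compact support of $\psi$, and the a priori bound $\sup_t\E[\|u_{\theta}(t)\|_2^2]\le C$ of Theorem \ref{thm:existec-viscous-apriori} control the $x$- and $y$-integrals by a constant uniform in $s$ and $\delta_0$.

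I expect the one genuinely delicate point to be precisely this $(y,v)$-integrability under the weak hypothesis $h_2\in L^2(E,m)$: the naive Lipschitz bound $|\gamma(u_{\theta}+\eta-v)-\gamma(u_{\theta}-v)|\le\|\gamma'\|_\infty|\eta|$ is not integrable in $v$ at all, and it is the convolution identity together with the interpolation $\min(|\eta|,|\eta|^p)\le|\eta|^2$ (valid because $p\ge2$) that simultaneously localises in $v$ and keeps the $z$-integral at the level of $\|h_2\|_{L^2(m)}^2$, thereby avoiding any need for higher integrability of $h_2$. The quadratic term is handled by the same computation with outer exponent $p/2$, using Minkowski's integral inequality to interchange the $L^{p/2}$-norm with the $dr\,m(dz)$-integration, and is of strictly lower order in $\delta_0$. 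Once these are in place, the passage through Morrey's embedding and the bookkeeping of the $\delta_0$-powers are routine, and taking the supremum over $s$ is immediate because every constant produced is independent of $s$.
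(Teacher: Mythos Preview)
The paper does not actually prove this lemma; it simply states that ``a proof of which could be found in \cite{BisMajKarl_2014}''. Your sketch reconstructs precisely the argument from that reference: the two identities are immediate from differentiating under the $dx$-integral and the compensated Poisson integral, and the $L^\infty$ bound is obtained by combining the Sobolev--Morrey embedding $W^{1,p}(\R^{d+1})\hookrightarrow L^\infty(\R^{d+1})$ (valid since $p\ge d+3>d+1$) with $p$-th moment estimates for the Poisson stochastic integral, the $\delta_0$-scaling coming entirely from $\int_0^T\rho_{\delta_0}^p\,dr=\delta_0^{-(p-1)}\|\rho\|_p^p$. The convolution identity $\gamma(a+\eta)-\gamma(a)=(\gamma'\ast\mathbf 1_{[-\eta,0]})(a)$ together with the interpolation $\min(|\eta|,|\eta|^p)\le|\eta|^2$ is exactly the device used in \cite{BisMajKarl_2014} to secure $v$-integrability while keeping the $z$-integral at the $L^2(E,m)$ level required by \ref{A5}. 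Your treatment of the bracket term $\E\big[(\int\!\!\int|G|^2\,m(dz)\,dr)^{p/2}\big]$ via Minkowski's integral inequality is correct in spirit; in \cite{BisMajKarl_2014} this is handled by an iterative argument exploiting $p=2^k$, which is the reason for that structural hypothesis on $p$. Either route yields the subdominant power $\delta_0^{-p/2}$, so the stated bound follows.
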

 
  \begin{lem}\label{stochastic_lemma_itointegralterm} It holds that 
 \begin{align} 
  &\lim_{l\goto 0} \lim_{\kappa \goto 0}\lim_{\delta_0 \goto 0} I_3  \notag \\
  & = \E \Big[\int_{\Pi_{T}}\int_{\R^d}\int_{E} \Big\{ \beta \big(u_{\theta}(r,x)+ \eta(x,u_{\theta}(r,x);z)
  -u_\eps(r,y)-\eta(y,u_\eps(r,y);z)\big) \notag \\
  & \hspace{2.4cm}-\beta \big(u_{\theta}(r,x)-u_\eps(r,y)-\eta(y,u_\eps(r,y);z)\big)+ \beta \big(u_{\theta}(r,x)-u_\eps(r,y)\big) \notag \\
  &  \hspace{2.5cm}-\beta \big(u_{\theta}(r,x)+ \eta(x,u_{\theta}(r,x);z)-u_\eps(r,y)\big) 
  \Big\} \psi(r,y)\,\varrho_{\delta}(x-y)\,m(dz)\,dx\,dy\,dr\Big]. \notag
  \end{align}
 \end{lem}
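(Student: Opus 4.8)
The plan is to start from the reduced form \eqref{stochastic_estimate_6} of $I_3$, in which the martingale identity \eqref{eq:conditional_indep} has already been used (both to kill $J_3$ and to subtract the lagged factor at time $s-\delta_0$):
\[
I_3 = \E\Big[\int_{\R}\int_{\Pi_T} M[\beta,\varphi_{\delta,\delta_0}](s;y,v)\big(J_l(u_\eps\con\tau_\kappa(s,y)-v)-J_l(u_\eps\con\tau_\kappa(s-\delta_0,y)-v)\big)\,dy\,ds\,dv\Big].
\]
The guiding idea is that $M[\beta,\varphi_{\delta,\delta_0}](s;y,v)$ is a compensated Poisson integral carried by $\tilde N$ over the $r$-interval $[s-\delta_0,s]$ (the support of $\rho_{\delta_0}(\,\cdot-s)$), while the increment of $J_l$ over $[s-\delta_0,s]$ is driven by the \emph{same} measure $\tilde N$; upon taking expectations only the diagonal interaction of the two stochastic integrals survives, and this is exactly what produces the four $\beta$-differences in the claim.

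First I would apply the It\^o--L\'evy formula to $r\mapsto J_l(u_\eps\con\tau_\kappa(r,y)-v)$ on $[s-\delta_0,s]$, using that $u_\eps\con\tau_\kappa$ solves \eqref{eq:levy_stochconservation_laws-viscous-regularize}, to split
\[
J_l(u_\eps\con\tau_\kappa(s,y)-v)-J_l(u_\eps\con\tau_\kappa(s-\delta_0,y)-v)=D_{\mathrm{fv}}(s;y,v)+D_{\mathrm{mart}}(s;y,v),
\]
where $D_{\mathrm{fv}}$ collects all finite-variation contributions (the drift terms $\Delta(\phi(u_\eps)\con\tau_\kappa)$, $\mathrm{div}_x(f(u_\eps)\con\tau_\kappa)$, $\eps\Delta(u_\eps\con\tau_\kappa)$ together with the Poisson compensator), each an ordinary $r$-integral over an interval of length $\delta_0$, and $D_{\mathrm{mart}}$ is the compensated Poisson integral over $[s-\delta_0,s]$ with jump amplitude $J_l(u_\eps\con\tau_\kappa(r^-,y)+(\eta(\cdot,u_\eps(r^-);z)\con\tau_\kappa)(y)-v)-J_l(u_\eps\con\tau_\kappa(r^-,y)-v)$.

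The contribution of $D_{\mathrm{fv}}$ vanishes: it is of size $O(\delta_0)$, and since $M(s;\cdot,\cdot)$ has compact support in $y$ (inherited from $\psi$ and $\varrho_\delta$) and is effective in $v$ only on the $l$-width of the $J_l$-difference, the pairing is dominated by $\|M(s;\cdot,\cdot)\|_{L^\infty(\rd\times\R)}$ and, through the bound \eqref{eq:l-infinity bound} of Lemma \ref{lem:L-infinity estimate} (with $p\ge d+3$), is controlled by a multiple of $\delta_0^{1-(p-1)/p}=\delta_0^{1/p}\to0$. For the surviving term $\E[\int M\,D_{\mathrm{mart}}]$ I would invoke the It\^o--L\'evy isometry in its polarized form for two integrals against the common measure $\tilde N$ over $[s-\delta_0,s]$; this collapses the product onto the diagonal in $(r,z)$ and yields a $\rho_{\delta_0}(r-s)\,m(dz)\,dr$–integral over $[s-\delta_0,s]$ of the product of the $\beta$-amplitude of $M$ and the $J_l$-amplitude of $D_{\mathrm{mart}}$.

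Finally I pass to the iterated limits in this diagonal term. Since $\rho_{\delta_0}(\,\cdot-s)$ is an approximate identity of unit mass, the limit $\delta_0\to0$ localizes $r$ to $s$ (Lebesgue differentiation, using the $L^2$-time regularity and the \textit{a priori} bounds \eqref{bounds:a-priori-viscous-solution} together with \ref{A5} for uniform integrability), replacing $u_\theta(r,x)$ and $u_\eps\con\tau_\kappa(r^-,y)$ by their values at $r=s$; then $\kappa\to0$ removes the mollification, with $u_\eps\con\tau_\kappa(s,y)\to u_\eps(s,y)$ and $(\eta(\cdot,u_\eps(s,\cdot);z)\con\tau_\kappa)(y)\to\eta(y,u_\eps(s,y);z)$ in $L^2$. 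Writing $a=u_\theta(s,x)+\eta(x,u_\theta(s,x);z)$, $b=u_\theta(s,x)$, $c=u_\eps(s,y)+\eta(y,u_\eps(s,y);z)$ and $d=u_\eps(s,y)$, the remaining $v$-integral satisfies
\[
\int_{\R}\big(\beta(a-v)-\beta(b-v)\big)\big(J_l(c-v)-J_l(d-v)\big)\,dv\ \underset{l\to0}{\longrightarrow}\ \beta(a-c)-\beta(b-c)-\beta(a-d)+\beta(b-d),
\]
because $J_l(\,\cdot-v)\,dv$ converges to a Dirac mass; substituting the values of $a,b,c,d$ and relabelling $s$ as $r$ reproduces exactly the four-term integrand of the statement. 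The main obstacle is the joint justification of the cross-isometry and the $\delta_0\to0$ localization — showing that only the diagonal martingale interaction of $M$ with $D_{\mathrm{mart}}$ persists while the finite-variation and compensator remainders are negligible — and this is precisely where the $L^\infty$-estimate of Lemma \ref{lem:L-infinity estimate} and the fact that both viscous solutions are driven by the same $\tilde N$ enter decisively.
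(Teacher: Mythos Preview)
Your proposal is correct and follows essentially the same route as the paper: the paper also starts from \eqref{stochastic_estimate_6}, applies the It\^{o}--L\'{e}vy formula to $r\mapsto J_l(u_\eps\con\tau_\kappa(r,y)-v)$ on $[s-\delta_0,s]$, uses the $L^\infty$-bound of Lemma~\ref{lem:L-infinity estimate} to kill the finite-variation pieces (after an integration by parts in $v$ that turns $J_l'$ into $J_l$ and $M[\beta,\cdot]$ into $M[\beta',\cdot]$ via the identity $\partial_v M[\gamma,\varphi]=M[-\gamma',\varphi]$), and keeps only the diagonal martingale interaction $B^{\eps,l,\kappa}$ obtained from the polarized isometry, before passing to the limits $\delta_0\to0$, $\kappa\to0$, $l\to0$. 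The only point you leave implicit is this $v$-integration by parts, which is what makes the $L^\infty$-bound directly applicable to the drift terms (otherwise one would carry an extra $1/l$ or $1/l^2$ from $J_l'$ or $J_l''$, still harmless since $\delta_0\to0$ first with $l$ fixed).
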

 
 \begin{proof}
   Note that, for all $y\in \R^d$, $u_\eps \con \tau_\kappa(\cdot,y)$ satisfies
 \begin{align*}
  du_\eps \con \tau_\kappa(s,y)= & \text{div} (f(u_\eps) \con \tau_\kappa(s,y))\,ds + \Delta(\phi(u_\eps)\con \tau_\kappa(s,y))\,ds \notag \\
  & + \eps  \Delta u_\eps \con \tau_\kappa(s,y)\,ds+\int_{E} (\eta(\cdot,u_\eps;z) \con \tau_\kappa(s,y))\tilde{N}(dz,ds). 
 \end{align*}
 Now apply It\^{o}-L\'{e}vy formula on $J_l(u_\eps \con \tau_\kappa(s,y)-v)$ to get 
 \begin{align*}
  & J_l(u_\eps \con \tau_\kappa(s,y)-v)-J_l(u_\eps \con \tau_\kappa(s-\delta_0,y)-v)\\
  =& \int_{s-\delta_0}^s J_l^{\prime} (u_\eps \con \tau_\kappa(\sigma,y)-v)\Big(\text{div} (f(u_\eps) \con \tau_\kappa(\sigma,y)) + \eps  \Delta (u_\eps \con \tau_\kappa(\sigma,y))
  +\Delta(\phi(u_\eps)\con \tau_\kappa(\sigma,y)) \Big) \,d\sigma \\
  & + \int_{s-\delta_0}^s \int_{E} \Big( J_l(u_\eps \con \tau_\kappa(\sigma,y)+(\eta(\cdot,u_\eps;z) \con \tau_\kappa(\sigma,y))-v)- J_l(u_\eps \con \tau_\kappa(\sigma,y)-v)\Big)
  \tilde{N}(dz,d\sigma)\\
  & + \int_{s-\delta_0}^s \int_{E} \int_{\lambda=0}^1(1-\lambda)
  J_{l}^{\prime\prime}( u_\eps \con \tau_\kappa(\sigma,y)-v+\lambda\,(\eta(\cdot,u_\eps;z) \con \tau_\kappa(\sigma,y))) \\
  &  \hspace{3cm}\times  (\eta(\cdot,u_\eps;z) \con \tau_\kappa(\sigma,y))^2\,d\lambda \,m(dz)\,d\sigma\\
  =& - \frac{\partial}{\partial v}\int_{s-\delta_0}^s \Big(\text{div} (f(u_\eps) \con \tau_\kappa(\sigma,y)) + \eps  \Delta (u_\eps \con \tau_\kappa(\sigma,y))
  +\Delta(\phi(u_\eps)\con \tau_\kappa(\sigma,y)) \Big)\notag \\& \hspace{7cm} \times J_l (u_\eps \con \tau_\kappa(\sigma,y)-v)\,d\sigma \\
  & + \int_{s-\delta_0}^s \int_{E} \Big( J_l(u_\eps \con \tau_\kappa(\sigma,y)+(\eta(\cdot,u_\eps;z) \con \tau_\kappa(\sigma,y))-v)- J_l(u_\eps \con \tau_\kappa(\sigma,y)-v)\Big)
  \tilde{N}(dz,d\sigma)\\
  & + \int_{s-\delta_0}^s \int_{E} \int_{\lambda=0}^1(1-\lambda)
  J_{l}^{\prime\prime}( u_\eps \con \tau_\kappa(\sigma,y)-v+\lambda\,(\eta(\cdot,u_\eps;z) \con \tau_\kappa(\sigma,y))) \\
  &  \hspace{3cm}\times  (\eta(\cdot,u_\eps;z) \con \tau_\kappa(\sigma,y))^2\,d\lambda \,m(dz)\,d\sigma\\ 
 \end{align*}
  Therefore, from \eqref{stochastic_estimate_6} and Lemma \ref{lem:L-infinity estimate}, we have
  \begin{align}
 I_3 & = -\E \Big[\int_{\R}\int_{\Pi_{T}} M[\beta^{\prime},\varphi_{\delta,\delta_0}](s;y,v)\Big(
 \int_{s-\delta_0}^s J_l(u_\eps\con \tau_\kappa(\sigma,
 y)-v)\,\text{div} (f(u_\eps) \con \tau_\kappa(\sigma,y)) \,d\sigma\Big)\,ds\,dy\,dv\Big]\notag  \\
 & -\E \Big[\int_{\R}\int_{\Pi_{T}} M[\beta^{\prime},\varphi_{\delta,\delta_0}](s;y,v)\Big(\int_{s-\delta_0}^s
 J_l(u_\eps \con \tau_\kappa(\sigma,
 y)-v)\, \eps  \Delta( u_\eps\con \tau_\kappa(\sigma,y)) \,d\sigma\Big)\,ds\,dy\,dv\Big]\notag  \\
  & -\E \Big[\int_{\R}\int_{\Pi_{T}} M[\beta^{\prime},\varphi_{\delta,\delta_0}](s;y,v)\Big(\int_{s-\delta_0}^s
 J_l(u_\eps \con \tau_\kappa(\sigma,
 y)-v)\, \Delta( \phi(u_\eps)\con \tau_\kappa(\sigma,y)) \,d\sigma\Big)\,ds\,dy\,dv\Big]\notag  \\
 &  + \E \Big[\int_{\Pi_{T}}\int_{\R}\int_{s-\delta_0}^s\int_{\R^d}\int_{E} \Big( \beta(u_{\theta}(r,x)+ \eta(x,u_{\theta}(r,x);z)
  -v)-\beta(u_{\theta}(r,x)-v)\Big) \notag \\
  & \hspace{4cm}\times\Big( J_l(u_\eps\con \tau_\kappa(r,y)+\eta(\cdot,u_\eps;z)\con \tau_\kappa(r,y)-v)-J_l(u_\eps\con \tau_\kappa(r,y)-v)\Big) \notag \\
  & \hspace{5cm}\times\rho_{\delta_0}(r-s)\,\psi(s,y)\,\varrho_{\delta}(x-y) \,m(dz)\,dx\,dr\,dv\,dy\,ds\Big] \notag \\
  & +\E \Big[\int_{\R}\int_{\Pi_{T}} M[\beta,\phi_{\delta,\delta_0}](s;y,v)\Big\{ \int_{s-\delta_0}^s \int_{E}
  \int_{\lambda=0}^1 J_{l}^{\prime\prime}( u_\eps \con \tau_\kappa(\sigma,y)-v+\lambda\,(\eta(\cdot,u_\eps;z) \con \tau_\kappa(\sigma,y))) \notag  \\
  &  \hspace{6cm}\times (1-\lambda) (\eta(\cdot,u_\eps;z) \con \tau_\kappa(\sigma,y))^2\,d\lambda \,m(dz)\,d\sigma \Big\}\,dy\,ds\,dv\Big] \notag \\
  &\equiv A_1^{\kappa,l,\eps}(\delta,\delta_0) + A_2^{\kappa,l,\eps}(\delta,\delta_0)+  A_3^{\kappa,l,\eps}(\delta,\delta_0)
  + B^{\eps, l,\kappa} +  A_4^{\kappa,l,\eps}(\delta,\delta_0). \label{stochastic_estimate_7}
 \end{align}
 Note that, for fixed $\kappa$ and $\eps$, $\Delta \big(u_\eps\con \tau_\kappa \big) \in L^2(\Omega \times \Pi_T)$. One can use Young's 
 inequality for convolution and replace
 $u_\eps$ by $u_\eps \con \tau_\kappa$ to adapt the same line of argument as in \cite{BisMajKarl_2014} and conclude 
 \begin{align*}
  A_1^{\kappa,l,\eps}(\delta,\delta_0)\goto 0,\quad 
  A_2^{\kappa,l,\eps}(\delta,\delta_0)\goto 0,\quad 
   A_3^{\kappa,l,\eps}(\delta,\delta_0)\goto 0,\quad \text{and} \quad 
  A_4^{\kappa,l,\eps}(\delta,\delta_0)\goto 0 \quad \text{as} \,\, \delta_0 \goto 0.
 \end{align*}
Again it is routine to pass to the limit for $ B^{\eps, l,\kappa}$ and arrive at the conclusion that 
\begin{align}
 &\lim_{l\goto 0}\lim_{\kappa \goto 0}\lim_{\delta_{0}\goto 0} B^{\eps, l,\kappa}  \notag \\
 = & \E \Big[\int_{\Pi_{T}}\int_{\R^d}\int_{E} \Big\{ \beta \big(u_{\theta}(r,x)+ \eta(x,u_{\theta}(r,x);z)
  -u_\eps(r,y)-\eta(y,u_\eps(r,y);z)\big) \notag \\
  & \hspace{2.4cm}-\beta \big(u_{\theta}(r,x)-u_\eps(r,y)-\eta(y,u_\eps(r,y);z)\big)+ \beta \big(u_{\theta}(r,x)-u_\eps(r,y)\big) \notag \\
  &  \hspace{2.5cm}-\beta \big(u_{\theta}(r,x)+ \eta(x,u_{\theta}(r,x);z)-u_\eps(r,y)\big) 
  \Big\} \psi(r,y)\,\varrho_{\delta}(x-y)\,m(dz)\,dx\,dy\,dr\Big]. \notag
\end{align}
This completes the proof.
 \end{proof}
 
 Let us consider the additional terms $I_4 + J_4$. A similar line arguments as in \cite{BaVaWit_2014,BaVaWit_2012,BisMajKarl_2014} along with classical 
 properties of convolution yields the following:
 \begin{lem}\label{stochastic_lemma_additinalterm}
 \begin{align*}
  &\lim_{l\goto 0}\lim_{\kappa \goto 0}\lim_{\delta_{0}\goto 0} J_4 = \E \Big[ \int_{\Pi_T} \int_{\R^d}\int_{E}\int_{\lambda =0}^1 
 (1-\lambda)\beta^{\prime\prime} \big(u_\eps(s,y)-u_{\theta}(s,x)+\lambda \eta(y,u_\eps(s,y);z)\big)\notag \\
 &\hspace{5cm} \times\eta^2(y,u_\eps(s,y);z)\psi(s,y)\varrho_{\delta}(x-y)\,d\lambda \,m(dz)\,dx\,dy \,ds\Big],
\end{align*}
and
\begin{align*}
 &\lim_{l\goto 0}\lim_{\kappa \goto 0}\lim_{\delta_{0}\goto 0} I_4 = \E\Big[\int_{\Pi_T} \int_{\R^d}\int_{E}
 \int_{\lambda =0}^1 (1-\lambda)
 \beta^{\prime\prime} \big(u_{\theta}(s,x)-u_\eps(s,y) +\lambda \eta(x,u_{\theta}(s,x);z)\big)\notag \\
 &\hspace{5cm} \times \eta^2(x,u_{\theta}(s,x);z)\psi(s,x)\varrho_{\delta}(x-y) \,d\lambda \,m(dz)\,dx\,dy \,ds\Big].
\end{align*}
\end{lem}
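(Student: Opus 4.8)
The plan is to establish both identities by removing the three regularising parameters one at a time, in the order prescribed by the statement, namely $\delta_0\to 0$, then $\kappa\to 0$, then $l\to 0$, mimicking the treatment of $I_6$ in the proof of Lemma~\ref{stochastic_lemma_nondegeneracy}. A useful preliminary observation is that $I_4$ and $J_4$ carry only the compensator measure $m(dz)$ together with Lebesgue measures, so after taking $\E$ every passage to the limit is a \emph{deterministic} dominated-convergence argument, with no It\^{o} term to control. The dominating function is furnished by the uniform estimate $\sup_{\eps}\sup_{t}\E[\|u_\eps(t)\|_2^2]\le C$ of Theorem~\ref{thm:existec-viscous-apriori}, the growth bound $\eta^2(x,u;z)\le g^2(x)(1+|u|)^2h_2^2(z)$ from \ref{A5} (so that $\int_E\eta^2\,m(dz)\le g^2(x)(1+|u|)^2\|h_2\|_{L^2(E,m)}^2$ with $g\in L^\infty\cap L^2$), and the boundedness of $\beta',\beta''$; since $\psi$ has compact support and, for the fixed $\delta$ here, $\varrho_\delta(x-y)$ localises the $x$-variable to a compact set, the whole integrand is dominated uniformly in the remaining frozen parameters.

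First I would treat $\delta_0\to 0$. As $\supp\rho\subset[-1,0]$, the kernel $\rho_{\delta_0}(t-s)$ is a left approximate identity in $t$; the only $t$-dependence in $J_4$ lies in the factor $J_l(u_{\theta}(t,x)-k)$, while in $I_4$ it lies in $\eta^2(x,u_{\theta}(t,x);z)\beta^{\prime\prime}(u_{\theta}(t,x)+\tau\eta(x,u_{\theta}(t,x);z)-k)$. In either case the limit amounts to replacing $u_{\theta}(t,x)$ by $u_{\theta}(s,x)$, which I would justify exactly as for $I_6$: after the substitution $t=s-\delta_0 r$, the error is controlled by the $L^1_{\mathrm{loc}}$ time-translation continuity estimate
\[
\lim_{\delta_0\to 0}\E\Big[\int_0^1\!\!\int_0^T\!\!\int_{K_\delta}\big|u_{\theta}(t+\delta_0 r,x)-u_{\theta}(t,x)\big|\,dx\,dt\,\rho(-r)\,dr\Big]=0,
\]
the nonlinear dependence through $\beta^{\prime\prime}$ and $\eta^2$ being absorbed by the (local) Lipschitz continuity of $\beta^{\prime\prime}$ and the $u$-Lipschitz bound on $\eta$ in \ref{A3}, together with the dominating function above. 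This collapses the double time integral onto $t=s$.

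Next, $\kappa\to 0$ is the routine mollifier limit: $u_\eps\con\tau_\kappa\to u_\eps$ and $\eta(\cdot,u_\eps;z)\con\tau_\kappa\to\eta(\cdot,u_\eps(\cdot);z)$ in $L^2$, hence a.e. along a subsequence, so by continuity of $\beta^{\prime\prime}$ and of $r\mapsto r^2$ the integrand converges pointwise and is dominated; for $J_4$ this turns $(\eta(y,u_\eps;z)\con\tau_\kappa)^2$ into $\eta^2(y,u_\eps(s,y);z)$ and $u_\eps\con\tau_\kappa$ into $u_\eps(s,y)$ inside $\beta^{\prime\prime}$, while for $I_4$ it replaces $u_\eps\con\tau_\kappa(s,y)$ by $u_\eps(s,y)$ inside $J_l$. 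Finally, for $l\to 0$ I use that $J_l$ is a symmetric approximate identity in $k$, so $\int_\R(\cdots)J_l(w(s,\cdot)-k)\,dk$ tends to the profile evaluated at $k=w(s,\cdot)$ for the continuous, dominated $k$-dependence at hand; this sets $k=u_{\theta}(s,x)$ in $J_4$ and $k=u_\eps(s,y)$ in $I_4$, producing precisely the two stated expressions (with the Taylor dummy relabelled $\lambda$).

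The main obstacle is the innermost limit $\delta_0\to 0$: it is the only step that genuinely exploits time-regularity of the viscous solutions rather than a formal substitution, and one must verify that the nonlinear composition $u_{\theta}\mapsto\eta^2(\cdot,u_{\theta};z)\beta^{\prime\prime}(u_{\theta}+\tau\eta-k)$ (respectively the factor $J_l(u_{\theta}-k)$) is compatible with the $L^1_{\mathrm{loc}}$ time-translation continuity, \emph{uniformly} in the frozen parameters $\kappa,l$ and integrably in $z$. Once this is secured, the remaining two limits are standard consequences of convolution convergence and of the approximate-identity property of $J_l$.
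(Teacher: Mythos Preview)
Your proposal is correct and follows essentially the same route the paper indicates: the paper does not spell out a proof here but simply asserts that ``a similar line of argument as in \cite{BaVaWit_2014,BaVaWit_2012,BisMajKarl_2014} along with classical properties of convolution'' yields the result, and your three-step plan (Lebesgue-point/time-translation argument for $\delta_0\to 0$ as in the treatment of $I_6$, mollifier convergence for $\kappa\to 0$, approximate-identity property of $J_l$ for $l\to 0$) is exactly this standard argument made explicit. One small remark: in the stated limit for $I_4$ the test function appears as $\psi(s,x)$, but your procedure---and the definition $\varphi_{\delta,\delta_0}(t,x,s,y)=\rho_{\delta_0}(t-s)\varrho_\delta(x-y)\psi(s,y)$---naturally produces $\psi(s,y)$; this is an inconsequential typo in the statement rather than a flaw in your argument.
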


 Now we add these terms (cf. Lemma \ref{stochastic_lemma_additinalterm}) with the terms coming from
 Lemma \ref{stochastic_lemma_itointegralterm} and have the following lemma.
 \begin{lem} \label{stochastic_lemma_itoint_additional}
   Assume that $ \vartheta \goto 0+ $, $ \delta\goto 0+ $ and $\vartheta ^{-1}\delta^{2}\goto 0+ $, then
\begin{align}
& \limsup_{\vartheta \goto 0+,\,\delta\goto 0+,\,\vartheta ^{-1}\delta^2\goto 0+ }\limsup_{\theta,\eps \rightarrow 0}\Big[
\lim_{l\goto 0} \lim_{\kappa \goto 0}\lim_{\delta_0 \goto 0}\Big((I_3 +J_3)+ (I_4 + J_4)\Big)\Big]=0.\notag
\end{align}
 \end{lem}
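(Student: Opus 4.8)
The plan is to fuse the three surviving contributions---the genuinely stochastic term $I_3$ (recall that $J_3=0$ by the martingale identity \eqref{eq:conditional_indep}) together with the two It\^o--L\'evy correction terms $I_4$ and $J_4$---into a single, manifestly nonnegative remainder, and then to dominate it by a quantity of order $(\vartheta+\delta)^2/\vartheta$ which is killed by the scaling hypotheses. First I would send $\delta_0,\kappa,l\goto 0$ using Lemmas \ref{stochastic_lemma_itointegralterm} and \ref{stochastic_lemma_additinalterm}. The three resulting integrals live on the same domain $\Pi_T\times\R^d\times E$, carry the common weight $\psi(r,y)\varrho_\delta(x-y)\,m(dz)$ inherited from $\varphi_{\delta,\delta_0}$, and involve $u_\theta$ and $u_\eps$ evaluated at the \emph{same} time $r$; assumption \ref{A5} guarantees that each of them is finite, so their integrands may be added pointwise.

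With the shorthand $a=u_\theta(r,x)$, $b=u_\eps(r,y)$, $\eta_x=\eta(x,a;z)$ and $\eta_y=\eta(y,b;z)$, I would then perform the key algebraic collapse. Using the evenness of $\beta$ and the elementary identity $\int_0^1(1-\lambda)\beta''(c+\lambda h)h^2\,d\lambda=\beta(c+h)-\beta(c)-\beta'(c)h$, the four-term mixed second difference coming from $I_3$ cancels against the two diagonal It\^o corrections $I_4$ and $J_4$, leaving exactly the single remainder
\begin{align*}
R(r,x,y,z):=\int_0^1(1-\lambda)\,\beta''\big(a-b+\lambda(\eta_x-\eta_y)\big)\,(\eta_x-\eta_y)^2\,d\lambda\ge 0 .
\end{align*}
Hence $\lim_{l}\lim_{\kappa}\lim_{\delta_0}\big[(I_3+J_3)+(I_4+J_4)\big]=\E\big[\int_{\Pi_T}\int_{\R^d}\int_E R\,\psi(r,y)\varrho_\delta(x-y)\,m(dz)\,dx\,dy\,dr\big]$, a nonnegative quantity.

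The heart of the argument---and the step I expect to be the main obstacle---is the uniform estimate of this integral. By \eqref{eq:approx to abosx}, $\beta_\vartheta''$ is supported in $[-\vartheta,\vartheta]$ with $0\le\beta_\vartheta''\le M_2/\vartheta$, so on the support of the integrand one has $|a-b|\le\vartheta+|\eta_x-\eta_y|$. Inserting assumption \ref{A3} (and using $0\le h_1\le 1$) gives $|\eta_x-\eta_y|\le\lambda^*|a-b|+K|x-y|\le\lambda^*|a-b|+K\delta$. This is precisely where the strict inequality $\lambda^*<1$ is indispensable: it closes the self-referential bound into $|a-b|\le(\vartheta+K\delta)/(1-\lambda^*)$ on the support, and therefore $(\eta_x-\eta_y)^2\le C(\lambda^*,K)(\vartheta+\delta)^2\,h_1^2(z)$ there. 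Combining this with $\beta_\vartheta''\le M_2/\vartheta$ and $h_1\in L^2(E,m)$ produces the bound
\begin{align*}
0\le\E\Big[\int_{\Pi_T}\int_{\R^d}\int_E R\,\psi\,\varrho_\delta\,m(dz)\,dx\,dy\,dr\Big]\le C\,\frac{(\vartheta+\delta)^2}{\vartheta}\,\|h_1\|_{L^2(E,m)}^2\,\|\psi\|_{L^1},
\end{align*}
with $C$ independent of $\eps,\theta,\vartheta$ and $\delta$.

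Because this bound does not depend on $\eps$ or $\theta$, the inner $\limsup_{\theta,\eps\goto 0}$ is controlled by the same right-hand side, so no Young-measure passage is required at this stage. Finally I would let $\vartheta\goto0$ and $\delta\goto0$ with $\vartheta^{-1}\delta^2\goto0$, whereupon $(\vartheta+\delta)^2/\vartheta=\vartheta+2\delta+\vartheta^{-1}\delta^2\goto0$; together with the nonnegativity just established this forces the stated $\limsup$ to be $0$. It is worth noting that the role of the scaling $\vartheta^{-1}\delta^2\goto0$ is exactly to annihilate the $K|x-y|$ contribution (the spatial inhomogeneity of the noise), while $\lambda^*<1$ tames the genuinely multiplicative $\lambda^*|a-b|$ contribution.
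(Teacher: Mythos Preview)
Your proposal is correct and follows essentially the same route as the paper: both combine Lemmas \ref{stochastic_lemma_itointegralterm} and \ref{stochastic_lemma_additinalterm} into the single nonnegative Taylor remainder $\int_0^1(1-\lambda)\beta''\big((u_\theta-u_\eps)+\lambda(\eta_x-\eta_y)\big)(\eta_x-\eta_y)^2\,d\lambda$ (the paper's \eqref{eq:endgame}), bound it by a quantity of the form $C(\vartheta+\vartheta^{-1}\delta^2)$, and conclude by the scaling hypothesis together with nonnegativity. The only difference is cosmetic: where the paper outsources the pointwise estimate to \cite[Lemma~5.11]{BisMajKarl_2014}, you spell it out explicitly---using the support property of $\beta_\vartheta''$ to obtain $|a-b|\le\vartheta+|\eta_x-\eta_y|$, then closing the loop via $\lambda^*<1$ in \ref{A3}---arriving at the equivalent bound $(\vartheta+\delta)^2/\vartheta=\vartheta+2\delta+\vartheta^{-1}\delta^2$.
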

 \begin{proof}
  In view of Lemmas \ref{stochastic_lemma_itointegralterm} and \ref{stochastic_lemma_additinalterm}, we see that
  \begin{align}
& \lim_{l\goto 0}\lim_{\kappa \goto 0} \lim_{\delta_0 \goto 0} 
\Big((I_3 +J_3)+ (I_4 + J_4)\Big) \notag \\
&= \E \Big[\int_{\Pi_T}\int_{\R^d}\Big(\int_{E} \Big\{\beta \big(u_{\theta}(t,x) -u_\eps(t,y)+\eta(x,u_{\theta}(t,x);z)-\eta(y,u_\eps(t,y);z)\big)\notag \\
& \hspace{4cm} -\big(\eta(x,u_{\theta}(t,x);z)-\eta(y,u_\eps(t,y);z)\big)\beta^\prime \big(u_{\theta}(t,x)-u_\eps(t,y)\big) \notag \\
& \hspace{5cm}-\beta \big(u_{\theta}(t,x)-u_\eps(t,y)\big)\Big\}\,m(dz)\Big) \psi(t,y)\varrho_\delta(x-y)\,dx\,dy\,dt\Big] \notag \\
&=\E\Big[\int_{\Pi_T}\int_{\R^d}\Big( \int_{E}\int_{\tau=0}^1 b^2
(1-\tau)\beta^{\prime\prime}(a+\tau\,b)\,d\tau\,m(dz)\Big)\,\psi(t,y)\varrho_{\delta}(x-y)\,dx\,dy\,dt\Big],\label{eq:endgame}
\end{align}
where $ a=u_{\theta}(t,x)-u_\eps(t,y)$ and $b=\eta(x,u_{\theta}(t,x);z)-\eta(y,u_\eps(t,y);z)$. By using the similar argument as one used 
in the proof of  \cite[Lemma $5.11$]{BisMajKarl_2014}, we arrive at  
\begin{align*}
 \lim_{l\goto 0}\lim_{\kappa \goto 0} \lim_{\delta_0 \goto 0} 
\Big((I_3 +J_3)+ (I_4 + J_4)\Big) & \le  C_1\big(\vartheta + \vartheta^{-1}\delta^2\big)T,\notag
\end{align*}
where the constant $C_1$ depends only on $\psi$ and is in particular independent of $\eps$. 
We now let $\vartheta\goto 0$, $ \delta\goto 0$ and $\vartheta^{-1}\delta^{2}\goto0$, yielding
\begin{align*}
& \limsup_{\vartheta \goto 0,\delta\goto 0,\vartheta ^{-1}\delta^2\goto 0}
\limsup_{\theta,\eps \rightarrow 0}\Big[ \lim_{l\goto 0} \lim_{\kappa \goto 0}\lim_{\delta_0 \goto 0}
\Big((I_3 +J_3)+ (I_4 + J_4)\Big)\Big]\le 0.
\end{align*} 
This concludes the proof as $\underset{l\goto 0}\lim\, \underset{\kappa \goto 0} \lim\,\underset{\delta_0 \goto 0}
\lim\,\Big((I_3 +J_3)+ (I_4 + J_4)\Big) \ge 0$, thanks to \eqref{eq:endgame}. 
 \end{proof}

We now turn our attention back to the terms which are coming from the Lemmas \ref{stochastic_lemma_additional_nondegeneracy}
and \ref{stochastic_lemma_nondegeneracy}. To this end, define
\begin{align}
 \mathcal{H}:&= -2\E\Big[\int_{\Pi_T}\int_{\R^d} \int_0^1 \int_0^1 \int_{\tilde{u}(s,x,\gamma)}^{u(s,y,\alpha)}
 \Big(\int_{r=\sigma}^{\tilde{u}(s,x,\gamma)} \beta^{\prime \prime}(\sigma-r) \sqrt{\phi^\prime(r)}\,dr\Big)\sqrt{\phi^\prime(\sigma)}\,d\sigma  \notag\\
& \hspace{6cm} \times \mbox{div}_y \grad_x \big[\psi(s,y)\varrho_{\delta}(x-y)\big] \,d\gamma\,d\alpha\,dx\,dy\,ds\Big] \notag \\
& \quad + \E\Big[\int_{\Pi_T}\int_{\R^d}\int_0^1 \int_0^1  \phi^\beta \big(\tilde{u}(s,x,\gamma),u(s,y,\alpha)\big)\Delta_x \varrho_\delta(x-y)
 \psi(s,y)\,d\gamma\,d\alpha\,dx\,dy\,ds\Big] \notag \\
 & \qquad +  \E\Big[\int_{\Pi_T}\int_{\R^d} \int_0^1 \int_0^1 \phi^\beta \big(u(s,y,\alpha),\tilde{u}(s,x,\gamma)\big)
\Delta_y \big[\psi(s,y)\varrho_\delta(x-y)\big]\,d\gamma\,d\alpha \,dx\,dy\,ds\Big] \notag \\
&= \E\Big[\int_{\Pi_T}\int_{\R^d} \int_0^1 \int_0^1 \Big\{ 2I_\beta \big(\tilde{u}(s,x,\gamma),u(s,y,\alpha)\big)
+ \phi^\beta \big(\tilde{u}(s,x,\gamma),u(s,y,\alpha)\big)  \notag \\  
& \hspace{4cm}  + \phi^\beta \big(u(s,y,\alpha),\tilde{u}(s,x,\gamma)\big)\Big\}  
\psi(s,y)\Delta_y \varrho_\delta(x-y)\,d\gamma\,d\alpha\,dx\,dy\,ds\Big] \notag \\
& \quad + \E\Big[\int_{\Pi_T}\int_{\R^d} \int_0^1 \int_0^1 \Big( 2\,I_\beta \big(u(s,y,\alpha),\tilde{u}(s,x,\gamma)\big)
+ 2\,\phi^\beta \big(\tilde{u}(s,x,\gamma),u(s,y,\alpha)\big)\Big) \notag \\
& \hspace{6cm} \times \grad_y\psi(s,y)\cdot \grad_y \varrho_\delta(x-y)\,d\gamma\,d\alpha\,dx\,dy\,ds\Big] \notag \\
& \qquad \quad +  \E\Big[\int_{\Pi_T}\int_{\R^d} \int_0^1  \int_0^1 \phi^\beta \big(u(s,y,\alpha),\tilde{u}(s,x,\gamma)\big)
\Delta_y\psi(s,y) \varrho_\delta(x-y)\,d\gamma\,d\alpha\,dx\,dy\,ds\Big] \notag \\
& \equiv  \mathcal{H}_1 + \mathcal{H}_2 + \mathcal{H}_3,\label{eq:h}
\end{align}
where
\begin{align*}
 I_\beta(a,b)=\int_a^b \int_{\mu}^a \beta^{\prime\prime}(\mu-\sigma)\sqrt{\phi^\prime(\sigma)}\,d\sigma \sqrt{\phi^\prime(\mu)}\,d\mu 
 \quad \text{for any} \quad a,b \in \R.
\end{align*}
 Our aim is to pass the limit in $\mathcal{H}$ as $(\vartheta,\delta) \goto (0,0)$. For this, we need some \textit{a priori} estimates on $I_\beta(a,b)$. 
 Here we state the required lemma whose proof could be found in \cite{BaVaWit_2014}.
 \begin{lem}\label{lem:technical}
The following holds:
\begin{align}
 1.)~~ I_\beta(a,b)=  I_\beta(b,a) \quad \text{and} \quad  I_\beta(a,b)=- \frac{1}{2}  \int_a^b \int_{a}^{b} \beta^{\prime\prime}(\sigma-\mu)
  \sqrt{\phi^\prime(\mu)}\sqrt{\phi^\prime(\sigma)}\,d\mu \,d\sigma.  \notag \\
  2.)~~  2\, I_\beta(a,b) + \phi^\beta(a,b) + \phi^\beta(b,a)
   =\frac{1}{2}\int_a^b \int_{a}^{b} \beta^{\prime\prime}(\mu-\sigma) [\sqrt{\phi^\prime(\mu)}-\sqrt{\phi^\prime(\sigma)}]^2\,d\mu
   \,d\sigma.\notag
\end{align}
Moreover, if  $\sqrt{\phi^\prime}$ has a modulus of continuity $\omega_\phi$, then
\begin{align*}
 2\, I_\beta(a,b) + \phi^\beta(a,b) + \phi^\beta(b,a) \le C |b-a| |\omega_\phi(|\vartheta|)|^2
\end{align*}
and
\begin{align*}
 2\, I_\beta(a,b) + \phi^\beta(b,a) \le C |b-a|  |\omega_\phi(|\vartheta|)|^2 + C \min\{ 2\,\vartheta, |b-a| \}.
\end{align*}
\end{lem}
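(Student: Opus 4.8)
The plan is to establish the two algebraic identities in part 1 and part 2 first, since these are purely elementary consequences of Fubini's theorem together with the fact that $\beta=\beta_\vartheta$ is even (so $\beta''$ is even and $\beta'(0)=0$), and only afterwards to read off the three quantitative bounds from the \emph{perfect square} representation in part 2 by exploiting the localisation of $\beta_\vartheta''$.

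For part 1, I would write $I_\beta(a,b)=-\int_a^b\!\int_a^\mu \beta''(\mu-\sigma)\sqrt{\phi'(\sigma)}\sqrt{\phi'(\mu)}\,d\sigma\,d\mu$, that is, the integral of the kernel $g(\mu,\sigma):=\beta''(\mu-\sigma)\sqrt{\phi'(\mu)}\sqrt{\phi'(\sigma)}$ over the triangle $\{a\le\sigma\le\mu\le b\}$. Since $\beta''$ is even, $g$ is symmetric in $(\mu,\sigma)$, so the integral over the full square $[a,b]^2$ is exactly twice the integral over that triangle. This gives $I_\beta(a,b)=-\tfrac12\int_a^b\!\int_a^b g\,d\mu\,d\sigma$, which is manifestly invariant under $a\leftrightarrow b$ and under $\mu\leftrightarrow\sigma$; the former yields $I_\beta(a,b)=I_\beta(b,a)$ and the latter the stated double-integral formula. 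For part 2, I would expand $[\sqrt{\phi'(\mu)}-\sqrt{\phi'(\sigma)}]^2=\phi'(\mu)+\phi'(\sigma)-2\sqrt{\phi'(\mu)}\sqrt{\phi'(\sigma)}$ inside the double integral. The mixed term reproduces $2I_\beta(a,b)$ by part 1, while the remaining symmetric piece equals $\int_a^b\!\int_a^b\beta''(\mu-\sigma)\phi'(\mu)\,d\mu\,d\sigma$; performing the inner $\sigma$-integration via $\int_a^b\beta''(\mu-\sigma)\,d\sigma=\beta'(\mu-a)-\beta'(\mu-b)$ and recalling $\phi^\beta(a,b)=\int_b^a\beta'(\sigma-b)\phi'(\sigma)\,d\sigma$ identifies this piece with $\phi^\beta(a,b)+\phi^\beta(b,a)$. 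Collecting the terms gives identity 2.

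For the quantitative estimates I would use that $\beta_\vartheta''\ge0$, $\operatorname{supp}\beta_\vartheta''\subset[-\vartheta,\vartheta]$ and $\int_{\R}\beta_\vartheta''=\beta'(+\infty)-\beta'(-\infty)=2$, so that $\int_a^b\!\int_a^b|\beta''(\mu-\sigma)|\,d\mu\,d\sigma\le2|b-a|$. On the support of $\beta''(\mu-\sigma)$ one has $|\mu-\sigma|\le\vartheta$, hence by assumption \ref{A1} $\big|\sqrt{\phi'(\mu)}-\sqrt{\phi'(\sigma)}\big|\le\omega_\phi(|\mu-\sigma|)\le\omega_\phi(\vartheta)$; inserting this into the perfect-square representation of identity 2 gives the first estimate at once. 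For the second estimate I would write $2I_\beta(a,b)+\phi^\beta(b,a)=\big[2I_\beta(a,b)+\phi^\beta(a,b)+\phi^\beta(b,a)\big]-\phi^\beta(a,b)$ and control the leftover flux $\phi^\beta(a,b)$. Because $\beta'$ is nondecreasing with $\beta'(0)=0$ and $\phi'\ge0$, one has $\phi^\beta(a,b)\ge0$, so the bracket already bounds the left-hand side; the extra term $C\min\{2\vartheta,|b-a|\}$ then accounts for the transition layer where $\beta_\vartheta'$ differs from $\mathrm{sign}$, a layer of width $\vartheta$ on which $|\phi'|\le c_\phi$, so that the discrepancy between $\phi^\beta(a,b)$ and $\mathrm{sign}(a-b)(\phi(a)-\phi(b))$ is bounded by a constant times $\min\{2\vartheta,|b-a|\}$.

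The main obstacle is not any single inequality but the careful bookkeeping with the scaling of $\beta_\vartheta$: correctly tracking the support $[-\vartheta,\vartheta]$ and the total mass $2$ of $\beta_\vartheta''$, and verifying that the modulus $\omega_\phi$ from \ref{A1} may be inserted uniformly on the support of the kernel. The genuinely load-bearing step is the perfect-square identity in part 2; once it is available, each of the three bounds is short. As indicated after the statement, the detailed computation parallels the corresponding lemma in \cite{BaVaWit_2014}, which I would cite for the remaining routine verifications.
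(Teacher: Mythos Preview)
Your proposal is correct. The paper itself does not prove this lemma but merely cites \cite{BaVaWit_2014}; your sketch supplies the standard argument from that reference. The symmetrisation in part 1 via the evenness of $\beta''$ and Fubini, and the perfect-square expansion in part 2 followed by the inner $\sigma$-integration, are exactly the intended computations. Your observation for the second quantitative bound---that $\phi^\beta(a,b)\ge 0$ because $\beta'$ is odd and nondecreasing and $\phi'\ge 0$, so that $2I_\beta(a,b)+\phi^\beta(b,a)\le 2I_\beta(a,b)+\phi^\beta(a,b)+\phi^\beta(b,a)\le C|b-a|\,\omega_\phi(\vartheta)^2$---in fact yields a slightly sharper inequality than the one stated (the additional term $C\min\{2\vartheta,|b-a|\}$ is nonnegative and therefore harmless).
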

 Let us back to the expression $\mathcal{H}$. Regarding this, we have following:
 \begin{lem} \label{stochastic_lemma_6}
 \begin{align*}
   \lim_{(\vartheta,\delta) \goto (0,0)}  \mathcal{H}= \E\Big[\int_{\Pi_T} \int_0^1 \int_0^1 |\phi(u(s,y,\alpha)- \phi(\tilde{u}(s,y,\gamma))|
\Delta_y\psi(s,y)\,d\gamma\,d\alpha\,dy\,ds\Big].
\end{align*}
 \end{lem}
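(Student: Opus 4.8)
The plan is to treat the three pieces $\mathcal{H}_1,\mathcal{H}_2,\mathcal{H}_3$ of \eqref{eq:h} separately, organised by the number of derivatives landing on the mollifier $\varrho_\delta$: the term $\mathcal{H}_3$ carries an undifferentiated $\varrho_\delta$ together with the full $\Delta_y\psi$ and will produce the asserted limit, whereas $\mathcal{H}_1$ and $\mathcal{H}_2$ carry $\Delta_y\varrho_\delta$ (of size $\delta^{-2}$) and $\grad_y\varrho_\delta$ (of size $\delta^{-1}$) and will be shown to vanish. The quantitative inputs are the algebraic identities and modulus estimates of Lemma \ref{lem:technical}, the $L^2$ bound \eqref{bounds:a-priori-viscous-solution} transferred to the Young measure limits $u(\cdot,\cdot,\alpha)$ and $\tilde{u}(\cdot,\cdot,\gamma)$, and the structural hypothesis \ref{A1} on $\sqrt{\phi'}$.

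For $\mathcal{H}_3$ I first let $\delta\goto0$. Since $\varrho_\delta(x-y)$ is an approximate identity and $\phi^\beta(a,\cdot)$ is globally Lipschitz with $\tilde{u}\in L^2$, a Lebesgue point argument gives $\int_{\R^d}\phi^\beta(u(s,y,\alpha),\tilde{u}(s,x,\gamma))\varrho_\delta(x-y)\,dx\goto \phi^\beta(u(s,y,\alpha),\tilde{u}(s,y,\gamma))$ for a.e.\ $y$. Using the uniform bound $|\phi^\beta(a,b)-|\phi(a)-\phi(b)||\le C\vartheta$, valid because $\beta_\vartheta'$ differs from $\mathrm{sign}$ only on $[-\vartheta,\vartheta]$ and $\phi$ is non-decreasing, I then let $\vartheta\goto0$ and obtain $\mathcal{H}_3\goto \E[\int_{\Pi_T}\int_0^1\int_0^1|\phi(u(s,y,\alpha))-\phi(\tilde{u}(s,y,\gamma))|\Delta_y\psi(s,y)\,d\gamma\,d\alpha\,dy\,ds]$, which is exactly the right hand side. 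The $L^2$ bound furnishes the uniform integrability making these two passages legitimate in the joint limit.

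The decisive point for $\mathcal{H}_1$ and $\mathcal{H}_2$ is cancellation. Write $S:=2I_\beta(\tilde{u},u)+\phi^\beta(\tilde{u},u)+\phi^\beta(u,\tilde{u})$; with $(a,b)=(\tilde{u},u)$, Lemma \ref{lem:technical} gives $S=\tfrac12\int_a^b\int_a^b\beta''(\mu-\sigma)[\sqrt{\phi'(\mu)}-\sqrt{\phi'(\sigma)}]^2\,d\mu\,d\sigma$, whence $0\le S\le C|u-\tilde{u}||\omega_\phi(\vartheta)|^2$. The coefficient of $\mathcal{H}_1$ is precisely $S$, so with $\|\Delta_y\varrho_\delta\|_{L^1}\le C\delta^{-2}$ and the $L^2$ bounds I get $|\mathcal{H}_1|\le C|\omega_\phi(\vartheta)|^2\delta^{-2}$. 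The coefficient of $\mathcal{H}_2$ equals $S$ up to the difference $D:=\phi^\beta(u,\tilde{u})-\phi^\beta(\tilde{u},u)$, and the same uniform bound shows $|D|\le C\vartheta$; since $\|\grad_y\varrho_\delta\|_{L^1}\le C\delta^{-1}$, this yields $|\mathcal{H}_2|\le C(|\omega_\phi(\vartheta)|^2+\vartheta)\delta^{-1}$. Invoking \ref{A1}, namely $\omega_\phi(r)=o(r^{2/3})$, and sending $(\vartheta,\delta)\goto(0,0)$ along the regime $\vartheta^{2/3}\ll\delta\ll\vartheta^{1/2}$ (which is non-empty and compatible with the couplings already imposed in Lemmas \ref{stochastic_lemma_3} and \ref{stochastic_lemma_itoint_additional}), both right hand sides tend to $0$.

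The main obstacle is $\mathcal{H}_1$: the second order blow-up $\delta^{-2}$ of $\Delta_y\varrho_\delta$ has to be defeated by the degeneracy defect $|\omega_\phi(\vartheta)|^2$, and this is exactly what forces the H\"{o}lder type hypothesis \ref{A1}. One needs $|\omega_\phi(\vartheta)|^2/\delta^2\goto0$ together with the lower bound $\delta\gg\vartheta$ already required by the flux and noise terms; the exponent $2/3$ is the precise threshold rendering the window $\vartheta^{2/3}\ll\delta\ll\vartheta^{1/2}$ non-empty, so that a single admissible scaling (for instance $\delta=\vartheta^{3/5}$) simultaneously kills $\mathcal{H}_1$, $\mathcal{H}_2$ and respects all earlier constraints. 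The remaining bookkeeping, namely justifying the Young measure representation of the limits of $\phi^\beta$ and $I_\beta$, is routine once these estimates are available.
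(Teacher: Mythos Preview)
Your proposal is correct and follows essentially the same approach as the paper: split $\mathcal{H}$ into $\mathcal{H}_1,\mathcal{H}_2,\mathcal{H}_3$, use Lemma~\ref{lem:technical} to bound $|\mathcal{H}_1|\le C|\omega_\phi(\vartheta)|^2\delta^{-2}$ and $|\mathcal{H}_2|\le C(|\omega_\phi(\vartheta)|^2+\vartheta)\delta^{-1}$, invoke \ref{A1} to kill these under a suitable coupling of $\delta$ and $\vartheta$, and pass to the limit in $\mathcal{H}_3$ via $|\phi^{\beta_\vartheta}(a,b)-|\phi(a)-\phi(b)||\le C\vartheta$. The only cosmetic difference is that the paper takes the endpoint scaling $\delta=\vartheta^{2/3}$ (which already suffices since \ref{A1} gives $|\omega_\phi(\vartheta)|^2/\vartheta^{4/3}\to 0$), whereas you argue via an interior point such as $\delta=\vartheta^{3/5}$ of the admissible window; both are compatible with the constraints $\vartheta/\delta\to 0$ and $\delta^2/\vartheta\to 0$ from Lemmas~\ref{stochastic_lemma_3} and~\ref{stochastic_lemma_itoint_additional}.
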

 
\begin{proof}
Let $\omega_{\phi}$ be a modulus of continuity of $\sqrt{\phi^\prime}$. Then, thanks to Lemma \ref{lem:technical}, we obtain
\begin{align}
 &|\mathcal{H}_1| \le C\, \E\Big[\int_{\Pi_T}\int_{\R^d}\int_0^1 \int_0^1 |\omega_\phi(|\vartheta|)|^2 
 \big|u(s,y,\alpha)-\tilde{u}(s,x,\gamma)\big| \psi(s,y) |\Delta_y \varrho_\delta(x-y)|\,d\gamma\,d\alpha\,dx\,dy\,ds\Big]\notag \\
   & \le  C(\psi) \frac{|\omega_\phi(|\vartheta|)|^2 }{\delta^2}, \notag 
  \end{align}
 and
 \begin{align}
  &|\mathcal{H}_2|\notag\\ \le & C\, \E\Big[\int_{\Pi_T}\int_{\R^d}\int_0^1 \int_0^1 |\omega_\phi(|\vartheta|)|^2
  \big|u(s,y,\alpha)-\tilde{u}(s,x,\gamma)\big|\, |\grad_y\psi(s,y)|\, |\grad_y \varrho_\delta(x-y)| \,d\gamma\,d\alpha\,dx\,dy\,ds\Big] \notag \\
   & \qquad + \E\Big[\int_{\Pi_T}\int_{\R^d} C \vartheta |\grad_y\psi(s,y)|\, |\grad_y
  \varrho_\delta(x-y)|\,dx\,dy\,ds\Big] \notag \\
  \le &  C(\psi) \frac{|\omega_\phi(|\vartheta|)|^2 }{\delta} + C \frac{\vartheta}{\delta}.\notag
\end{align}
Hence, we have
\begin{align}
  |\mathcal{H}_1| +  |\mathcal{H}_2|\le C(\psi) \frac{|\omega_\phi(|\vartheta|)|^2 }{\delta^2} 
  + C(\psi) \frac{|\omega_\phi(|\vartheta|)|^2 }{\delta} + C \frac{\vartheta}{\delta}.\notag
\end{align}
Put $\delta = \vartheta ^{\frac{2}{3}}$. Then, by our assumption \ref{A1}, we see that
\begin{align}
 \lim_{(\vartheta,\delta) \goto (0,0)} \big(\mathcal{H}_1 + \mathcal{H}_2\big)= 0.\notag
\end{align}
To prove the lemma, it is now required to show 
\begin{align*}
 \lim_{(\vartheta,\delta) \goto (0,0)} \mathcal{H}_3= \E\Big[\int_{\Pi_T} \int_0^1 \int_0^1 \big|\phi(u(s,y,\alpha))- \phi(u(s,y,\gamma))\big|
\Delta_y\psi(s,y) \,d\gamma\,d\alpha\,dy\,ds\Big]
\end{align*}
and this follows easily as $(a,b)\longmapsto |\phi(a)-\phi(b)| $ is Lipschitz continuous and
\begin{align*}
 \big| \phi^{\beta_\vartheta}(a,b)- |\phi(a)-\phi(b)| \big| \le C \vartheta \quad \text{for any }\quad a,b \in \R.
\end{align*}
\end{proof}


 All of the above results can now be combined into the following proposition.
\begin{prop} \label{kato_inequality}
 Let $\tilde{u}(t,x,\gamma)$ and $u(t,x,\alpha)$ be the predictable process with initial data $v(0,x)$ and $u(0,x)$ respectively 
 which have been extracted out of Young measure valued sub-sequential limit of the sequence  $\{u_{\theta}(t,x)\}_{\theta>0}$ and 
 $\{u_\eps(t,x)\}_{\eps>0}$ respectively. Then, for any non-negative $H^1( [0,\infty) \times \R^d)$ function $\psi(t,x)$ with compact
 support, the following inequality holds
  \begin{align}
  & 0\le  \E\Big[\int_{\R^d} \big|v_0(x)-u_0(x)\big|\psi(0,x)\,dx\Big] + \E\Big[\int_{\Pi_T}\int_0^1 \int_0^1  |\tilde{u}(t,x,\gamma)-u(t,x,\alpha)|
   \partial_t \psi(t,x)\,d\gamma\,d\alpha\,dx\,dt\Big] \notag \\
   & \quad - \E\Big[ \int_{\Pi_T} \int_0^1 \int_0^1 F\big(u(t,x,\alpha),\tilde{u}(t,x,\gamma)\big)\cdot\grad_x\psi(t,x)\,d\gamma\,d\alpha
   \,dx\,dt\Big] \notag \\
  & \qquad - \E\Big[ \int_{\Pi_T} \grad\Big( \int_0^1 \int_0^1 \big|\phi(u(t,x,\alpha))- \phi(\tilde{u}(t,x,\gamma))\big|\,d\gamma\,d\alpha\Big)
   \cdot\grad_x\psi(t,x)\,dx\,dt\Big]. \label{inq:kato-type}
  \end{align}
\end{prop}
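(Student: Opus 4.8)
The plan is to obtain \eqref{inq:kato-type} by adding the two doubled entropy inequalities \eqref{stochas_entropy_1} and \eqref{stochas_entropy_2}, which come from the It\^o--L\'evy formula applied to $u_\theta$ and to the mollified viscous solution $u_\eps \con \tau_\kappa$ respectively, and then passing to the iterated limit in the five parameters in a fixed order. After the addition, the left-hand contributions $I_{0,1}$ and $J_{0,1}$ are nonnegative (they carry the viscous dissipation $\beta^{\prime\prime}|\grad u|^2$), so they may be discarded to yield the weaker inequality $I_{0,2}+J_{0,2}\le \sum_j I_j+\sum_k J_k$. I would fix the order of limits as $\delta_0\goto0$, then $\kappa\goto0$, then $l\goto0$, then $\eps\goto0$, then $\theta\goto0$, and finally $(\vartheta,\delta)\goto(0,0)$ along the coupling $\delta=\vartheta^{2/3}$; this is precisely the order in which each of the preparatory lemmas was stated.

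The bulk of the work is then purely a matter of identifying each block with its already-computed limit. The initial-data block $I_1+J_1$ converges, by Lemma \ref{stochastic_lemma_initialcond}, to $\E[\int_{\R^d}|v_0-u_0|\psi(0,x)\,dx]$; the time-derivative block $I_2+J_2$ converges, by Lemma \ref{stochastic_lemma_partial_time}, to the transport term $\E[\int_{\Pi_T}\int_0^1\int_0^1|u-\tilde u|\,\partial_s\psi\,d\gamma\,d\alpha]$; the flux blocks $(J_6+J_7)+I_5$ collapse, via Lemmas \ref{stochastic_lemma_2} and \ref{stochastic_lemma_3}, to the single term $-\E[\int_{\Pi_T}\int_0^1\int_0^1 F(u,\tilde u)\cdot\grad_y\psi\,d\gamma\,d\alpha]$; and the spurious viscous remainders $I_7,J_8$ vanish by Lemma \ref{stochastic_lemma_4}, since they carry a factor $\theta^{1/2}$ or $\eps^{1/2}$. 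The degenerate-parabolic block $I_{0,2}+J_{0,2}$ (retained on the left) together with the two $\phi^\beta$-diffusion terms $I_6$ and $J_5$ are reorganised, by Lemmas \ref{stochastic_lemma_additional_nondegeneracy} and \ref{stochastic_lemma_nondegeneracy}, into the single expression $\mathcal{H}$ of \eqref{eq:h}: moving the diffusion defect to the right and combining it with the limits of $I_6$ and $J_5$ produces $\mathcal{H}_1+\mathcal{H}_2+\mathcal{H}_3$.

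The step I expect to be the main obstacle is the \emph{simultaneous} control of the two defect bundles, which only decay under a matched scaling of the entropy parameter $\vartheta$ and the spatial doubling parameter $\delta$. On the noise side, Lemmas \ref{stochastic_lemma_itointegralterm}, \ref{stochastic_lemma_additinalterm} and \ref{stochastic_lemma_itoint_additional} collapse $(I_3+J_3)+(I_4+J_4)$ to the nonnegative second-difference quantity in \eqref{eq:endgame}, bounded above by $C(\vartheta+\vartheta^{-1}\delta^2)$; this forces $\vartheta\goto0$ and $\vartheta^{-1}\delta^2\goto0$. On the diffusion side, Lemma \ref{stochastic_lemma_6} bounds the defects $\mathcal{H}_1,\mathcal{H}_2$ by $|\omega_\phi(\vartheta)|^2/\delta^2$ and $|\omega_\phi(\vartheta)|^2/\delta+\vartheta/\delta$, which instead demand that $\delta$ not be too small relative to $\vartheta$. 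The reconciling choice is exactly $\delta=\vartheta^{2/3}$: then $\vartheta^{-1}\delta^2=\vartheta^{1/3}\goto0$ kills the Lévy defect, while $|\omega_\phi(\vartheta)|^2/\delta^2=(\omega_\phi(\vartheta)/\vartheta^{2/3})^2\goto0$ by the modulus-of-continuity hypothesis in assumption \ref{A1} kills the parabolic defect. The crux is to verify that this one scaling is compatible with every earlier limit, i.e. that the Young-measure representations of the transport, flux and diffusion terms have already stabilised before $\vartheta$ and $\delta$ are coupled, so that no term is left uncontrolled.

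Finally I would assemble the surviving limits. Lemma \ref{stochastic_lemma_6} identifies $\lim_{(\vartheta,\delta)\to0}\mathcal{H}$ with $\E[\int_{\Pi_T}\int_0^1\int_0^1|\phi(u)-\phi(\tilde u)|\,\Delta_y\psi\,d\gamma\,d\alpha]$, which after an integration by parts moving the Laplacian onto $\grad_x\psi$ is precisely the last term of \eqref{inq:kato-type}; the noise and viscous bundles contribute zero; and the initial, transport and flux terms are as above. Collecting them in the inequality $0\le(\text{RHS limits})-\lim(I_{0,2}+J_{0,2})$ gives exactly \eqref{inq:kato-type}. The extension from the smooth compactly supported $\psi\in C_c^{1,2}$ used in the entropy formulation to arbitrary nonnegative $\psi\in H^1([0,\infty)\times\R^d)$ with compact support follows by a routine density argument, since every term in \eqref{inq:kato-type} is continuous in $\psi$ with respect to the $H^1$-norm.
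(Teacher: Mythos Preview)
Your proposal is correct and follows essentially the same approach as the paper's own proof: add \eqref{stochas_entropy_1} and \eqref{stochas_entropy_2}, drop the nonnegative dissipation terms $I_{0,1},J_{0,1}$, pass to the iterated limit $\delta_0\to0$, $\kappa\to0$, $l\to0$, $\eps\to0$, $\theta\to0$ using the preparatory lemmas, then couple $\delta=\vartheta^{2/3}$ and send $\vartheta\to0$, and finally extend from $C_c^{1,2}$ to $H^1$ test functions by density. Your identification of the competing constraints forcing the scaling $\delta=\vartheta^{2/3}$ (the L\'evy defect $\vartheta+\vartheta^{-1}\delta^2$ versus the parabolic defect $|\omega_\phi(\vartheta)|^2/\delta^2$) is exactly the point the paper handles through Lemmas \ref{stochastic_lemma_itoint_additional} and \ref{stochastic_lemma_6}.
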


\begin{proof} First we add \eqref{stochas_entropy_1} and \eqref{stochas_entropy_2} and then pass to the limit
$\underset{\eps \downarrow 0}\lim \,\underset{l\rightarrow 0} \lim\,\underset{\kappa \rightarrow 0}\lim
\,\underset{\delta_0\downarrow 0}\lim$. Invoking the 
Lemmas \ref{stochastic_lemma_additional_nondegeneracy}, \ref{stochastic_lemma_initialcond}, \ref{stochastic_lemma_partial_time},
\ref{stochastic_lemma_nondegeneracy}, 
\ref{stochastic_lemma_2}, \ref{stochastic_lemma_4} and \ref{stochastic_lemma_itoint_additional}, we put $\delta = \vartheta^{\frac 23}$
in the resulting expression and then let $\vartheta \goto 0$ with the second parts of 
Lemmas \ref{stochastic_lemma_initialcond}, \ref{stochastic_lemma_partial_time}. Keeping in mind the Lemmas
\ref{stochastic_lemma_3}, \ref{stochastic_lemma_itoint_additional} and \ref{stochastic_lemma_6}, we conclude that \eqref{inq:kato-type} holds 
for any nonnegative test function $\psi \in C_c^2\big([0,\infty)\times \R^d\big)$.
It now follows by routine approximation argument that \eqref{inq:kato-type} holds for any $\psi$ with compact support such 
that $\psi\in H^1( [0,\infty) \times \R^d)$. This completes the proof.
\end{proof}


\begin{rem}
Note that the same Proposition holds without assuming the existence of a modulus of continuity for $\phi^\prime$ if $\eta$ is not a function of $x$. Indeed, it is possible to pass to the limit first on the parameter $\vartheta$, then $\delta$, in Lemmas \ref{stochastic_lemma_initialcond}, \ref{stochastic_lemma_partial_time}, \ref{stochastic_lemma_3} and \ref{stochastic_lemma_itoint_additional}. 
Thus, if one assumes that $\eta$ is not a function of the space variable $x$, it is also possible to pass to the limit first on the parameter $\vartheta$, then $\delta$, in Lemma \ref{stochastic_lemma_itoint_additional} since one would have  
\begin{align*}
 \lim_{l\goto 0}\lim_{\kappa \goto 0} \lim_{\delta_0 \goto 0} 
\Big((I_3 +J_3)+ (I_4 + J_4)\Big) & \le  C_1 \vartheta T,\notag
\end{align*}
in its proof. Then, the result holds following \cite[first situation p.523]{BaVaWit_2014}.
\end{rem}

Our aim is to show the uniqueness of $u(t,x,\alpha)$ and $\tilde{u}(t,x,\gamma)$. To do this, here we follow the ideas
of \cite{boris_2010, BaVaWit_2014}, and define for each $ n\in \N $,
 \begin{align*}
\phi_n(x)=\begin{cases} 1,\quad \text{if}~~|x| \le n \\
                        \frac{n^a}{|x|^a} \quad \text{if}~~|x| > n 
           \end{cases}
\end{align*}
where $a=\frac{d}{2} + \tilde{\eps}$ in which $\tilde{\eps }>0$ could be chosen such a way that $\phi_n \in L^2(\R^d)$. Also, for each
$h>0$ and fixed $t\geq 0$, we define 
 \begin{align*}
\psi_h^t(s)=
\begin{cases}
1,\quad \text{if} ~ s\le t\\
1-\frac{s-t}{h}, \quad \text{if} ~t \le s\le t+h\\
0,\quad \text{if}~s> t+h.
\end{cases}
\end{align*}
 A straightforward calculation revels that,
 \begin {align*}
 \grad \phi_n(x) &= -a \frac{\phi_n(x)}{|x|} \frac{x}{|x|}{\bf 1}_{|x|>n} \in L^2(\R^d)^d  \\
\Delta  \phi_n(x)&= a(2+2 \tilde{\eps} -a)\frac{\phi_n(x)}{|x|^2} \in L^2(\{ |x|>n\}). 
\end{align*}
Clearly \eqref{inq:kato-type} holds with $\psi(s,x)=\phi_n(x)\psi_h^t(s)$. Thus, for a.e $t\ge 0$, we obtain
\begin{align}
   &\frac{1}{h}\int_{t}^{t+h} \E \Big[\int_{\R^d} \int_0^1 \int_0^1 \big|u(s,x,\alpha)-\tilde{u}(s,x,\gamma)|\phi_n(x)\,d\gamma\,d\alpha\,\,dx\Big]\,ds \notag \\
  & \le \E \Big[ \int_0^T \int_{\{|x|>n\}}\int_0^1 \int_0^1 |\phi(u(s,x,\alpha))- \phi(\tilde{u}(s,x,\gamma))|\Delta \phi_n(x)
  \psi_h^t(s)\,d\gamma\,d\alpha\,dx\,ds\Big] \notag \\ 
 & \quad - \E \Big[ \int_0^T \int_{\{|x|>n\}}\int_0^1 \int_0^1 F\big(u(s,x,\alpha),\tilde{u}(s,x,\gamma)\big)\cdot \grad \phi_n(x) 
  \psi_h^t(s)\,d\gamma\,d\alpha\,dx\,ds\Big] \notag \\
  &\qquad  - \E \Big[ \int_0^T \psi_h^t(s) \Big( \int_{\partial\{|x|>n\}}\int_0^1 \int_0^1 \big|\phi(u(s,x,\alpha))- \phi(\tilde{u}(s,x,\gamma))\big|\grad \phi_n(x)\cdot \tilde{n}
  \,d\gamma\,d\alpha\,dx\Big)\,ds\Big] \notag \\
  &  \qquad \qquad + \E\Big[\int_{\R^d} \big| v_0(x)-u_0(x) \big|\phi_n(x)\,dx\Big]. \label{inq:final-1}
 \end{align}
 Since $\grad \phi_n(x)\cdot \tilde{n}= \frac{a}{n} >0$ on the set $\partial\{|x|>n\}$, we have, from \eqref{inq:final-1}
 \begin{align}
   &\frac{1}{h}\int_{t}^{t+h} \E \Big[\int_{\R^d} \int_0^1 \int_0^1  \big|u(s,x,\alpha)-\tilde{u}(s,x,\gamma)\big|\phi_n(x)\,d\gamma\,d\alpha\,\,dx\Big]\,ds \notag \\
 &\le \E \Big[ \int_0^T \int_{\{|x|>n\}}\int_0^1 \int_0^1 a \big(2+2 \tilde{\eps} -a\big)\frac{\phi_n(x)}{|x|^2}
 \big|\phi(u(s,x,\alpha))- \phi(\tilde{u}(s,x,\gamma))\big| \psi_h^t(s)\,d\gamma\,d\alpha\,dx\,ds\Big] \notag \\ 
 &  \qquad + \E \Big[ \int_0^T \int_{\{|x|>n\}}\int_0^1 \int_0^1 a \frac{\phi_n(x)}{|x|} F\big(u(s,x,\alpha),\tilde{u}(s,x,\gamma)\big)\cdot  \frac{x}{|x|}
  \psi_h^t(s)\,d\gamma\,d\alpha\,dx\,ds\Big] \notag \\
  &  \qquad \qquad \quad+   \E\Big[\int_{\R^d} \big|v_0(x)-u_0(x)\big|\phi_n(x)\,dx\Big].\label{inq:final-2}
 \end{align}
 Note that $|F(a,b)|\le c_f|a-b|$ for any $a, b \in \R$. Since $\phi$ is Lipschitz continuous function and $n\ge 1$, inequality 
 \eqref{inq:final-2} gives
 \begin{align}
   &\frac{1}{h}\int_{t}^{t+h} \E \Big[\int_{\R^d} \int_0^1 \int_0^1  \big|u(s,x,\alpha)-\tilde{u}(s,x,\gamma)\big|\phi_n(x)\,d\gamma\,d\alpha\,\,dx\Big]\,ds \notag \\
 \le & C \E \Big[ \int_{\Pi_T} \int_{[0,1]^2} \big|u(s,x,\alpha)-\tilde{u}(s,x,\gamma)\big|\phi_n(x)\psi_h^t(s)\,d\gamma\,d\alpha\,dx\,ds\Big]
   + \E\Big[\int_{\R^d} \big|v_0(x)-u_0(x)|\phi_n(x)\,dx\Big]. \notag 
  \end{align}
  Taking limit as $h\goto 0$, and then using a weaker version of Gronwall's inequality, we obtain, for a.e. $t>0$,
   \begin{align}
   & \E \Big[\int_{\R^d} \int_0^1 \int_0^1 \big|u(t,x,\alpha)-\tilde{u}(t,x,\gamma)\big|\phi_n(x)\,d\gamma\,d\alpha\,dx\Big]
  \le  e^{CT} \,\E \Big[\int_{\R^d} \big|v_0(x)-u_0(x)\big|\phi_n(x)\,dx\Big].\notag
 \end{align}
 Thus, if we assume that $v_0(x)=u_0(x)$, then we arrive at the conclusion
\begin{align}
  \E \Big[\int_{\R^d} \int_0^1 \int_0^1 \big|u(t,x,\alpha)-\tilde{u}(t,x,\gamma)|\phi_n(x)\,d\gamma\,d\alpha\,dx\Big]=0,\label{eq:lasteqn}
 \end{align}
 which says that for almost all $\omega \in \Omega$, a.e. $(t,x)\in (0,T]\times \R^d$ and a.e. $(\alpha,\gamma)\in [0,1]^2$, $u(t,x,\alpha)=\tilde{u}(t,x,\gamma)$.  On the other hand,
 we conclude that the whole sequence of viscous approximation converges weakly in $L^2(\Omega \times \Pi_T)$. Since the limit process is 
 independent of the  additional (dummy) variable, the viscous approximation converges strongly in 
$L^p(\Omega \times (0,T); L^p(\Theta))$ for any $p<2$ and any bounded open set $\Theta \subset \R^d$.
 \subsection{Existence of entropy solution}
 In this subsection, using strong convergence of viscous solutions and \textit{a priori} bounds \eqref{bounds:a-priori-viscous-solution}
 we establish the existence of entropy solution to the underlying problem \eqref{eq:levy_stochconservation_laws}.
 \vspace{.2cm}
 
  Fix a nonnegative test function $ \psi\in C_c^\infty([0, \infty)\times \R^d)$, $B\in \mathcal{F}_T$ and convex entropy flux triple $(\beta,\zeta,\nu)$.
Now apply It\^{o}-L\'{e}vy formula \eqref{eq:levy_stochconservation_laws-viscous} and conclude
 \begin{align}
 & \E \Big[ {\bf 1}_{B} \int_{\Pi_T} \beta^{\prime\prime}(u_\eps(t,x)) |\Grad G(u_\eps(t,x))|^2\psi(t,x)\,dx\,dt\Big]\notag \\
  \le  & \E \Big[ {\bf 1}_{B} \int_{\R^d} \beta(u_\eps(0,x))\psi(0,x)\,dx\Big] 
   -\eps \E \Big[{\bf 1}_{B}\int_{\Pi_T}\beta^\prime(u_\eps(t,x))\grad u_\eps(t,x) \cdot \grad \psi(t,x)\,dx\,dt\Big]\notag \\
 +&  \E \Big[ {\bf 1}_{B}  \int_{\Pi_T} \Big(\beta(u_\eps(t,x)) \partial_t\psi(t,x) + \nu(u_\eps(t,x))\Delta \psi(t,x)
  - \grad \psi(t,x)\cdot \zeta(u_\eps(t,x)) \Big)\,dx\,dt\Big]\notag \\
 + &  \E \Big[ {\bf 1}_{B} \int_0^T \int_{E} \int_{\R^d} \int_0^1 \eta(x,u_\eps(t,x);z)\beta^\prime (u_\eps(t,x) + \theta\,\eta(x,u_\eps(t,x);z))
 \psi(t,x)\,d\theta\,dx\,\tilde{N}(dz,dt) \Big] \notag \\
+&  \E \Big[ {\bf 1}_{B} \int_0^T \int_{E} \int_{\R^d}  \int_0^1  (1-\theta)\eta^2(x,u_\eps(t,x);z)\beta^{\prime\prime} (u_\eps(t,x) + \theta\,\eta(x,u_\eps(t,x);z)) \notag \\
& \hspace{5cm} \times \psi(t,x)\,d\theta\,dx\,m(dz)\,dt \Big] \label{viscous-measure-inequality}
\end{align}
Let the predictable process $u(t,x)$ be the pointwise limit of $u_{\eps}(t,x)$ for a.e. $(t,x)\in (0,T)\times \R^d$ almost surely.
One can now pass to the limit in \eqref{viscous-measure-inequality} (cf. same argument as in \cite{BisMajKarl_2014}) except the
first term. The pointwise limit of $u_{\eps}(t,x)$ is not enough to pass the limit in the first term of the inequality 
because $u_\eps$ is in a gradient term. For this, we proceed as follows: fix $v\in L^2(\Omega \times \Pi_T)$. Define
\begin{align*}
f_\eps = \sqrt{\beta^{\prime\prime}(u_\eps(t,x))\psi(t,x) {\bf 1}_{B}},\quad \text{and} \quad g_\eps=\grad G(u_\eps(t,x)).
 \end{align*}
 Note that, $f_\eps$ is uniformly bounded and $g_\eps \rightharpoonup g=\grad G(u(t,x))$ in $L^2(\Omega \times \Pi_T)$. 
  Also, $f_\eps$ converges to $f$ pointwise (up to a subsequence) where $f=\sqrt{\beta^{\prime\prime}(u(t,x))\psi(t,x) {\bf 1}_{B}}$.
Since $|f_\eps\,v| \le  \sqrt{||\beta^{\prime\prime}||_{\infty} \psi(t,x)} |v(t,x)|$ and right hand side is $L^2$ integrable,
one can apply dominated convergence theorem to conclude 
\begin{align*}
 f_\eps\,v \longrightarrow fv \quad \text{in}\quad L^2(\Omega \times \Pi_T). 
\end{align*}
Moreover, we have $ f_\eps\,g_\eps \rightharpoonup fg \quad \text{in}\quad L^2(\Omega \times \Pi_T)$ and therefore, by Fatou's lemma for weak convergence, 
 \begin{align*}
  & \E \Big[ {\bf 1}_{B} \int_{\Pi_T} \beta^{\prime\prime}(u(t,x)) |\Grad G(u(t,x))|^2\psi(t,x)\,dx\,dt\Big]\notag \\ 
  \le  & \liminf_{\eps \downarrow 0}\, \E \Big[ {\bf 1}_{B} \int_{\Pi_T} \beta^{\prime\prime}(u_\eps(t,x))
  \big|\Grad G(u_\eps(t,x))\big|^2\psi(t,x)\,dx\,dt\Big].
 \end{align*}
 Thus, we can pass to the limit in \eqref{viscous-measure-inequality} as $\eps \goto 0$ and arrive at following inequality.
 \begin{align}
 & \E \Big[ {\bf 1}_{B} \int_{\Pi_T} \beta^{\prime\prime}(u(t,x)) |\Grad G(u(t,x))|^2\psi(t,x)\,dx\,dt\Big]
 -  \E \Big[ {\bf 1}_{B} \int_{\R^d} \beta(u_0(x))\psi(0,x)\,dx\Big] \notag \\
  \le  &  \E \Big[ {\bf 1}_{B}  \int_{\Pi_T} \Big(\beta(u(t,x)) \partial_t\psi(t,x) + \nu(u(t,x))\Delta \psi(t,x)
  - \grad \psi(t,x)\cdot \zeta(u_\eps(t,x)) \Big)\,dx\,dt\Big]\notag \\
 + &  \E \Big[ {\bf 1}_{B} \int_0^T \int_{E} \int_{\R^d} \int_0^1 \eta(x,u(t,x);z)\beta^\prime (u(t,x) + \theta\,\eta(x,u(t,x);z))
 \psi(t,x)\,d\theta\,dx\,\tilde{N}(dz,dt) \Big] \notag \\
+&  \E \Big[ {\bf 1}_{B} \int_0^T \int_{E} \int_{\R^d}  \int_0^1  (1-\theta)\eta^2(x,u(t,x);z)\beta^{\prime\prime}
(u(t,x) + \theta\,\eta(x,u(t,x);z)) \notag \\
& \hspace{5cm} \times \psi(t,x)\,d\theta\,dx\,m(dz)\,dt \Big]. \label{viscous-measure-inequality-1}
\end{align}
\vspace{.2cm}

We are now in a position to prove the existence of entropy solution for the underlying problem \eqref{eq:levy_stochconservation_laws}.
\begin{proof}[Proof of the Theorem \ref{thm:existenc}] The uniform moment estimate \eqref{bounds:a-priori-viscous-solution} together
with a general version of Fatou's lemma gives 
 \begin{align}
 \sup_{0\le t\le T} \E\Big[|| u(t,\cdot)||_2^2\Big] < \infty \,\quad \text{and}\,\quad \|\grad G(u)\|_{L^2(\Omega\times\Pi_T)}^2 <\infty \notag.
\end{align}
 For any  $ 0\le \psi\in C_c^\infty([0, \infty)\times \R^d)$ and 
 given convex entropy flux triple $(\beta,\zeta,\nu)$, Eq. \eqref{viscous-measure-inequality-1} holds for every 
 $B\in \mathcal{F}_T$. Hence, the following inequality 
\begin{align*}
& \int_{\R^d} \beta(u_0(x))\psi(0,x)\,dx  +  \int_{\Pi_T} \beta(u(t,x)) \partial_t\psi(t,x) \,dx\,dt\notag \\
& + \int_{\Pi_T} \nu(u(t,x))\Delta \psi(t,x)\,dx\,dt -  \int_{\Pi_T} \grad \psi(t,x)\cdot \zeta(u(t,x))\,dx\,dt \notag \\
&+ \int_0^T \int_{E} \int_{\R^d} \int_0^1 \eta(x,u(t,x);z)\beta^\prime \big(u(t,x) + \theta\,\eta(x,u(t,x);z)\big)\psi(t,x)\,d\theta\,dx\,\tilde{N}(dz,dt) \notag \\
& + \int_{E}  \int_{\Pi_T}  \int_0^1  (1-\theta)\eta^2(x,u(t,x);z)\beta^{\prime\prime} \big(u(t,x) + \theta\,\eta(x,u(t,x);z)\big)
       \psi(t,x)\,d\theta\,dx\,dt\,m(dz) \notag \\
& \quad \ge \int_{\Pi_T} \beta^{\prime\prime}(u(t,x)) \big|\Grad G(u(t,x))\big|^2\psi(t,x)\,dx\,dt
\end{align*}
holds $P$-almost surely. This shows that $u(t,x)$ is an entropy solution of \eqref{eq:levy_stochconservation_laws} in the sense of Definition \ref{defi:stochentropsol}.
This completes the proof.
\end{proof}

 We now close this section with a sketch of the justification of  our claim in Remark \ref{lem:p-bounds}. To see this, 
  let $h_\delta$ denote a smooth even convex approximation of $|.|^p$ define for positive $x$ by:
  
  $h_\delta$ vanishes at $0$  and uniquely recovered from its second order derivative defined as 
\\
\,  $h_\delta^{\prime\prime}(x)=x^{p-2}$ is $x \in[0,\frac1\delta]$ and $\frac{1}{\delta^{p-2}}$ if $x>\frac1\delta$.
\\
 It holds that , $0 \leq h_\delta(x) \nearrow h(x)=K_p|x|^p$ and there exists $C_p$ such that $0 \leq h_\delta^{\prime\prime}(x) \leq C_p h(x)$. Furthermore, it is easily seen that $ h_{\delta}^{\prime\prime}(x+y)\le\tilde{C}_p\big( h_{\delta}^{\prime\prime}(x)+h_{\delta}^{\prime\prime}(y)\big)$. 
 \medskip
 \\ 
 Note that the weak It\^{o}-L\'{e}vy formula in Theorem \ref{thm:weak-its} makes sense for $\beta =h_{\delta}$, as $h^{\prime\prime}_\delta$ is bounded.  This enables us write,  for almost every $t > 0$,
 
\begin{align*}
&E\int_{\R^d}h_\delta(u_\epsilon)dx - E\int_{\R^d}h_\delta(u_0)dx + E\int_0^t\int_{\R^d} (\phi^\prime(u_\epsilon)+\epsilon)h^{\prime\prime}_\delta(u_\epsilon)|\nabla u_\epsilon|^2\,dx\,dt 
\\&
= E\int_0^t\int_{E}\int_{\R^d} \Big(h_{\delta}(u_\epsilon +\eta(x,u_\epsilon;z ))-h_{\delta}(u_\epsilon)- \eta(x,u_\epsilon;z ) h_{\delta}^\prime(u_\epsilon)\Big)\,dx\, m(dz)\,dt.\\
&=  E\int_0^t\int_{E}\int_{\R^d} \int_0^1 (1-\theta)(\eta(x,u_\epsilon;z))^2 h_{\delta}^{\prime\prime}(u_\epsilon+ \theta \eta(x,u_\epsilon;z))\,d\theta \,dx\,m(dz)\,ds.
\end{align*}

We can now use the properties of $h_{\delta}$ and the assumptions on $\eta$ to arrive at
\begin{align*}
E\int_{\R^d}h_\delta(u_\epsilon)dx \leq& E\int_{\R^d}h_\delta(u_0)dx  +C_\eta E\int_0^t\int_{\R^d}(1+ u_\epsilon^2) h^{\prime\prime}_\delta(u_\epsilon) ds\\
\leq& E\int_{\R^d}|u_0|^pdx  +K_\eta E\int_0^t\int_{\R^d} h_\delta(u_\epsilon) ds
\end{align*}
and, by a weak Gronwall inequality, 
$E\int_{\R^d}h_\delta(u_\epsilon)dx \leq e^{K_\eta t} E\int_{\R^d}|u_0|^pdx$ for all almost all $t$.  This implies  $E\int_{\R^d}|u_\epsilon|^pdx \leq e^{C_\eta t} E\int_{\R^d}|u_0|^pdx$ by monotone convergence theorem. The solution $u$ will inherit the same property by Fatou's lemma. 
\medskip
\\
If $u_0$ is bounded and $\eta(x,u;z)=0$ for $|u|\geq M$, $M$ been given, then, consider non-negative regular convex function  $x \mapsto h(x)=[(x+K)^-]^2+[(x-K)^+]^2$ where $K=\max(M+M_1,\|u_0\|_\infty)$. Since $h(u_0)=0$ and $h$ vanishes where $\eta$ is active, the It\^o formula Yields $E\int_{\R^d}|h(u_\epsilon)|dx = 0$ and $u_\epsilon$ is uniformly bounded by $K$. Again, the solution $u$ will inherit the same property by passing to the limit.
}

\section{Uniqueness of entropy solution}
To prove the uniqueness of entropy solution, we compare any entropy solution to the viscous solution \textit{via} Kruzkov's doubling variables method 
and then pass to the limit as viscous parameter goes to zero. We have already shown that limit of the viscous solutions serve for existence of entropy 
solution for the underlying problem. Now let $v(t,x)$ be any entropy solution and $u_{\eps}(t,x)$ be viscous solution for the problem 
\eqref{eq:levy_stochconservation_laws-viscous}. Then one can use  exactly the same argument as in Section \ref{sec:existence-entropy}, and 
end up with the following equality 
\begin{align*}
  \E \Big[\int_{\R^d} \int_0^1 \big|u(t,x,\alpha)-v(t,x)|\phi_n(x)\,d\alpha\,dx\Big]=0.
 \end{align*} 
This implies that, for almost every $t\in [0,\infty)$, $ v(t,x)=u(t,x,\alpha)$ for almost every $x \in \R^d$, 
$(\omega,\alpha) \in \Omega \times (0,1)$. In other words, this proves the uniqueness for entropy solutions. 

\appendix
\section{Weak It\^{o}-L\'{e}vy formula}
Let $u$ be a $H^1(\R^d)$-valued $\mathcal{F}_t$-predictable process and it is a weak solution to  the SPDE
\begin{align}
  du(t,x) -\Delta \phi(u(t,x))\,dt =& \mbox{div}_x f(u(t,x)) \,dt +\int_{E} \eta(x, u(t,x);z)\tilde{N}(dz,dt) \notag \\
 & \qquad \quad  + \eps \Delta u(t,x)\,dt,\quad t>0, ~ x\in \R^d.\label{eq:levy_stochconservation_laws-viscous-appendix}
\end{align}In addition, in view of \eqref{convergence:weak-1}, we further assume that $u \in L^2\big((0,T)\times \Omega;H^1(\R^d)\big)$. Moreover, $u$ satisfies the initial condition $u_0\in L^2(\R^d)$ in the following sense: $P$ -almost surely
\begin{align}
 \label{weak-initial-consition} \lim_{h\rightarrow 0}\frac 1h  \int_0^h \int_{\R^d}  u(t,x) \phi(x)\, dx = \int_{\R^d} u_0(x) \phi(x ) \, dx. 
\end{align} for every $\phi \in C_c^{\infty}(\R^d)$.  We have the following weak version of It\^{o}-L\'{e}vy formula for $u(t,\cdot)$. 
\begin{thm} \label{thm:weak-its}Let the assumptions \ref{A1}-\ref{A5} hold and $u(t,\cdot)$ be a $H^1(\R^d)$-valued weak solution of \eqref{eq:levy_stochconservation_laws-viscous-appendix}, as described in subsection \ref{Existence of weak solution}., which satisfies \eqref{weak-initial-consition}. Then for every entropy triplet $(\beta, \zeta, \nu)$ and $\psi\in C_{c}^{1,2}([0,\infty)\times
 \R^d)$, it holds $P$-almost surely that
 \begin{align}
  &\int_{\R^d} \beta(u(T,x))\psi(T,x)\,dx -  \int_{\R^d} \beta(u(0,x))\psi(0,x)\,dx  \notag \\
  = & \int_{\Pi_T} \beta(u(t,x)) \partial_t\psi(t,x) \,dx\,dt -  \int_{\Pi_T} \grad \psi(t,x)\cdot \zeta(u(t,x))\,dx\,dt 
   \notag \\
 + & \int_{\Pi_T} \int_{E}  \int_0^1 \eta(x,u(t,x);z)\beta^\prime (u(t,x)
 + \theta\,\eta(x,u(t,x);z))\psi(t,x)\,d\theta\,\tilde{N}(dz,dt)\,dx \notag \\
+&\int_{\Pi_T}\int_{E}  \int_0^1  (1-\theta)\eta^2(x,u(t,x);z)\beta^{\prime\prime} (u(t,x) + \theta\,\eta(x,u(t,x);z))
\psi(t,x)\,d\theta\,m(dz)\,dx\,dt \notag \\
 -&   \int_{\Pi_T}  \Big(\eps \nabla \psi(t,x).\nabla_x\beta(u(t,x)) +\eps \beta''(u(t,x))|\nabla_x u(t,x)|^2\psi(t,x)\Big)\,dx\,dt \notag \\
-&  \int_{\Pi_T} \phi^\prime(u(t,x)) \beta^{\prime\prime}(u(t,x)) \big|\grad u(t,x)\big|^2\psi(t,x)\,dx\,dt
+  \int_{\Pi_T} \nu(u(t,x))\Delta \psi(t,x)\,dx\,dt\notag
\end{align} for almost every $T> 0$.
 \end{thm}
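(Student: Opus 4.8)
The plan is to reduce the weak formula to the classical It\^o--L\'evy formula by regularising $u$ in the space variable. Let $\{\tau_\kappa\}$ be the spatial mollifiers already introduced and set $u_\kappa:=u\con\tau_\kappa$. By the regularised equation \eqref{eq:levy_stochconservation_laws-viscous-regularize} (written there for $u_\eps$, but valid verbatim for any such weak solution $u$), for a.e.\ $x$ the real-valued process $t\mapsto u_\kappa(t,x)$ is an It\^o--L\'evy process,
\[
u_\kappa(t,x)=u_\kappa(0,x)+\int_0^t\Big(\grad\cdot(f(u)\con\tau_\kappa)+\Delta(\phi(u)\con\tau_\kappa)+\eps\,\Delta(u\con\tau_\kappa)\Big)(s,x)\,ds+\int_0^t\!\int_E(\eta(\cdot,u;z)\con\tau_\kappa)(x)\,\tilde N(dz,ds),
\]
whose finite-variation (drift) part is absolutely continuous in $t$ with values in $L^2(\R^d)$ — this is the point of the mollification, since $\Delta(\phi(u)\con\tau_\kappa)=\phi(u)\con\Delta\tau_\kappa$ and $\Delta(u\con\tau_\kappa)$ are genuine $L^2(\R^d)$ functions — and whose martingale part is a compensated Poisson integral. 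First I would apply the scalar It\^o--L\'evy formula to $\beta(u_\kappa(t,x))\psi(t,x)$, using the product rule in time for the smooth deterministic factor $\psi$ (its continuity and finite variation kill any covariation with the jumps) and writing the jump contributions with the integral form of the Taylor remainder, $\beta(a+c)-\beta(a)=c\int_0^1\beta'(a+\theta c)\,d\theta$ and $\beta(a+c)-\beta(a)-c\beta'(a)=c^2\int_0^1(1-\theta)\beta''(a+\theta c)\,d\theta$ with $c=(\eta(\cdot,u;z)\con\tau_\kappa)(x)$. Integrating in $x$ over $\R^d$ and in time over $[0,T]$, and invoking a stochastic Fubini theorem to interchange $\int_{\R^d}dx$ with $\int_0^t\!\int_E\tilde N(dz,ds)$, yields a mollified identity with all spatial operators acting on the smooth function $u_\kappa$.

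Next I would integrate by parts in space to move the differential operators off $u_\kappa$ and onto $\psi\beta'(u_\kappa)$, thereby generating the flux terms of the claimed formula. For the hyperbolic term, $\beta'(u_\kappa)\,\grad\cdot(f(u)\con\tau_\kappa)$ converges, after using $\zeta'=\beta'f'$, to $\grad\cdot\zeta(u)$, giving $-\int\grad\psi\cdot\zeta(u)$. For the degenerate-diffusion term I would write $\int\psi\beta'(u_\kappa)\Delta(\phi(u)\con\tau_\kappa)\,dx=-\int\grad\psi\,\beta'(u_\kappa)\cdot\grad(\phi(u)\con\tau_\kappa)\,dx-\int\psi\beta''(u_\kappa)\grad u_\kappa\cdot\grad(\phi(u)\con\tau_\kappa)\,dx$; in the limit the first piece becomes $-\int\grad\psi\cdot\grad\nu(u)=\int\nu(u)\Delta\psi$ (using $\nu'=\beta'\phi'$) and the second becomes the dissipation term $-\int\phi'(u)\beta''(u)|\grad u|^2\psi$. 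The same integration by parts applied to $\eps\,\Delta(u\con\tau_\kappa)$ produces the two $\eps$-terms, using $\beta'(u)\grad u=\grad\beta(u)$.

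The passage to the limit $\kappa\goto0$ is where the $H^1$ regularity of $u$ is crucial and where the argument is cleaner than in the $BV$/$L^1$ theory: since $f,\phi$ are Lipschitz and $u\in H^1(\R^d)$, both $f(u)$ and $\phi(u)$ lie in $H^1(\R^d)$, so $f(u)\con\tau_\kappa\to f(u)$ and $\phi(u)\con\tau_\kappa\to\phi(u)$ in $H^1$, i.e.\ their gradients converge in $L^2$; likewise $\grad u_\kappa\to\grad u$ in $L^2$. Combined with $u_\kappa\to u$ a.e., the boundedness and continuity of $\beta'$ and $\beta''$ let me pass to the limit in every deterministic term (each being a product of a strongly $L^2$-convergent factor and a boundedly a.e.\ convergent factor) with no residual commutator. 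For the compensator term I would dominate the integrand using \ref{A5} ($|\eta|\le g(x)(1+|u|)h_2(z)$ with $g\in L^2\cap L^\infty$, $h_2\in L^2(E,m)$) and invoke dominated convergence, and for the stochastic integral I would pass to the limit in $L^2(\Omega)$ via the It\^o--L\'evy isometry together with the same domination. The initial term is harmless after mollification, $u_\kappa(0,\cdot)=u_0\con\tau_\kappa\to u_0$ in $L^2(\R^d)$, the value at $t=0$ being identified through the weak initial condition \eqref{weak-initial-consition}.

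I expect the main obstacle to be the rigorous justification of the semimartingale structure of $u_\kappa$ together with the accompanying stochastic Fubini theorem: one must check that the right-hand side of \eqref{eq:levy_stochconservation_laws-viscous-regularize} really defines, for a.e.\ $x$ and simultaneously as an $L^2(\R^d)$-valued process, a c\`adl\`ag It\^o--L\'evy process to which the scalar formula legitimately applies, that its finite-variation part coincides (in $L^2(\R^d)$, a.e.\ $t$) with the Bochner integral of the drift, and that the initial value equals $u_0\con\tau_\kappa$ even though $u$ satisfies the Cauchy datum only in the averaged sense \eqref{weak-initial-consition}. The secondary delicate point is the limit in the martingale term, where pathwise arguments are unavailable and one must instead control $\E[\,\cdot\,]$ through the isometry and \ref{A5}; since the formula is applied in this paper only with $\beta$ having bounded second derivative (e.g.\ $\beta=\beta_\vartheta$ or $\beta(u)=u^2$), the boundedness of $\beta''$ required to dominate both the compensator and the It\^o correction causes no difficulty, the general $C^2$ case following by approximation.
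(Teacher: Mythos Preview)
Your proposal is correct and follows essentially the same route as the paper: mollify in space, apply the scalar It\^o--L\'evy formula to $\beta(u\con\tau_\kappa)\psi$, integrate by parts, and pass to the limit $\kappa\to 0$ using the $H^1$ regularity of $u$ for the drift terms and the It\^o--L\'evy isometry for the martingale term. The paper resolves the ``main obstacle'' you flagged---justifying that $u\con\tau_\kappa$ is genuinely a c\`adl\`ag It\^o--L\'evy process with initial value $u_0\con\tau_\kappa$---by testing the weak formulation against a trapezoidal time cutoff $\rho_n^t$ approximating $\mathbf{1}_{[0,t]}$ and invoking right continuity of the stochastic integral together with Lebesgue-point arguments and \eqref{weak-initial-consition} to pass $n\to\infty$; apart from this explicit step, your outline matches the paper's argument.
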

 
 \begin{proof}
Let $\{\tau_k\}$ be a standard sequence of mollifiers on $\R^d$. Then for every $\rho(\cdot)\in C_c^1((0, T))$ we  have
 \begin{align}
   \notag-\int_0^T u(s,\cdot )*\tau_k \rho^{\prime}(s)\,ds =& \int_0^T \rho(s) \Delta(\phi(u(s,\cdot))* \tau_k)\,ds + \int_0^T \rho(s)\, \mbox{div}_x (f(u)\con \tau_\kappa) \,ds\\
   & + \int_0^T\int_E \rho(s)\big(\eta(x,u, z)*\tau_k\big)\tilde{N}(\,dz,\, ds)+\epsilon \int_0^T \Delta \big( u * \tau_k(s,x)\big)\rho(s)\,ds\label{eq:mollified}
 \end{align} holds $P$-almost surely.  For every $n\in \mathbb{N}$, define
 
 $$\rho_n^t(s)=
\begin{cases}
ns ~\text{if}~0\le s\le \frac 1n\\
1, \quad \text{if} ~\frac 1n \le s<  t\\
1-n(s-t),\quad \text{if}~t+\frac 1n>s\ge  t\\
0,\quad \text{elsewhere}.
\end{cases}$$ It follows by standard approximation argument that \eqref{eq:mollified} is still valid if we replace $\rho(\cdot)$ by $\rho_n^t(\cdot)$. Afterwards, we invoke right continuity of stochastic integral  and standard facts related to Lebesgue points of Banach space valued functions to pass to the limit $n\rightarrow \infty $ and conclude for almost all $t>0$

 \begin{align}
\notag u*\tau_k(t,\cdot) - u_0*\rho_k =& \int_0^t \Delta(\phi(u(s,\cdot))* \tau_k)\,ds + \int_0^t \mbox{div}_x (f(u)\con \tau_\kappa) \,ds\\
   & + \int_0^t\int_E \big(\eta(x,u, z)*\tau_k\big)\tilde{N}(\,dz,\, ds)+\epsilon \int_0^t \Delta \big( u * \tau_k(s,x)\big)\,ds\label{eq:mollified-SDE}
 \end{align} $P$-holds almost surely.  In the above, we have used that the weak solution satisfies the initial condition in the sense of \eqref{weak-initial-consition}. Let $\beta$ be the entropy function mentioned in the statement and  $\psi$ be the test function specified. Now we apply It\^{o}-L\'{e}vy chain rule to $\beta(u*\tau_k(t,\cdot))$ to have, for almost every $t>0$,
  \begin{align}
\notag \beta\big(u*\tau_k(t,\cdot)\big) &= \beta(u_0*\rho_k)+ \int_0^t \beta^\prime\big(u*\tau_k(t,\cdot)\big) \Delta(\phi(u(s,\cdot))* \tau_k)\,ds\\ &+ \int_0^t\beta^\prime\big(u*\tau_k(t,\cdot)\big)\mbox{div}_x (f(u)\con \tau_\kappa) \,ds+\epsilon \int_0^t \beta^\prime\big(u*\tau_k(t,\cdot)\big)\Delta \big( u * \tau_k(s,x)\big)\,ds\notag\\
    &+ \int_0^t\int_E \Big(\beta(u*\tau_k+ \eta(x, u, z)*\tau_k)-\beta(u*\tau)\Big)\tilde{N}(\,dz,\, ds)\notag\\
    &+ \int_0^t\int_E \Big(\beta(u*\tau_k+ \eta(x, u, z)*\tau_k)-\beta(u*\tau) -\eta(x, u, z)*\tau_k\beta^{\prime}(u*\tau_k)\Big)\,m(\,dz)\,dt,
    \label{eq:mollified-SDE-ito}   
      \end{align} P-almost surely. We now apply It\'{o}-L\'{e}vy product rule on $\beta(u*\tau_k)\psi(t,x)$ and integrate with respect to $x$ to obtain for almost every $T> 0$, 
      
        \begin{align}
\notag& \int_{\R^d} \beta\big(u*\tau_k(T,x)\big)\,\psi(T,x)\,dx \\\notag &= \int_{\R^d}\beta(u_0*\rho_k)\psi(0,x)\,dx+\int_0^T\int_{\R^d} \beta(u*\tau_k)\partial_s\psi(s,x)\,dx\,ds\\\notag&+ \int_0^T\int_{\R^d} \beta^\prime\big(u*\tau_k(s,\cdot)\big) \Delta(\phi(u(s,\cdot))* \tau_k)\psi(s,x)\,dx\,ds\\&\notag+ \int_0^T\int_{\R^d}\beta^\prime\big(u*\tau_k(s,\cdot)\big)\mbox{div}_x (f(u)\con \tau_\kappa)\psi(s,x) \,dx\,ds\\&+\epsilon \int_0^T\int_{R^d} \psi(s,x)\beta^\prime\big(u*\tau_k(s,\cdot)\big)\Delta \big( u * \tau_k(s,x)\big)\,dx\,ds\notag\\
    &+ \int_0^T\int_{\R^d}\int_E\psi(s,x) \Big(\beta(u*\tau_k+ \eta(x, u, z)*\tau_k)-\beta(u*\tau)\Big)\,dx\tilde{N}(\,dz,\, ds)\notag\\
    &+ \int_0^T\int_{\R^d}\int_E\psi(s,x) \Big(\beta(u*\tau_k+ \eta(x, u, z)*\tau_k)-\beta(u*\tau) -\eta(x, u, z)*\tau_k\beta^{\prime}(u*\tau_k)\Big)\,m(\,dz)\,dx \,ds,
    \label{eq:mollified-SDE-ito-2}   
      \end{align} almost surely.        
      Note that $ u*\tau_k(T,\cdot) \rightarrow u(T,\cdot)$  and $u_0*\tau_k\rightarrow u_0$ in $L^2(\Omega\times \R^d)$ as $k \rightarrow 0$. Therefore by Lipschitz continuity of $\beta$, we have $ \int_{\R^d} \beta\big(u*\tau_k(T,x)\big)\,\psi(T,x)\,dx \rightarrow \int_{\R^d} \beta\big(u(T,x)\big)\,\psi(T,x)\,dx$ and $ \int_{\R^d}\beta(u_0*\rho_k)\psi(0,x)\,dx \rightarrow  \int_{\R^d}\beta(u_0)\psi(0,x)\,dx$ in $L^2(\Omega)$. By a similar reasoning, $\int_0^T\int_{\R^d} \beta(u*\tau_k)\partial_s\psi(s,x)\,dx\,ds \rightarrow \int_0^T\int_{\R^d} \beta(u)\partial_s\psi(s,x)\,dx\,ds$ as $k\rightarrow 0$. 
      
      Furthermore, note that 
      \begin{align*}
      \int_0^T\int_{\R^d} \beta^\prime\big(u*\tau_k(t,\cdot)\big) \Delta(\phi(u(s,\cdot))* \tau_k)\psi(s,x)\,ds\,dx\\
    =-\int_0^T\int_{\R^d} \nabla_x \Big(\psi(t,x)\beta^\prime\big(u*\tau_k(t,x)\big)\Big).\Big( \nabla\phi(u())* \tau_k)(s,x)\Big)\,dx\,ds
                \end{align*} and $\nabla u, \nabla \phi(u) \in L^2\Big(0,T; L^2(\Omega\times \R^d)\Big)$. 
                
                Therefore, $ \nabla_x \big(\psi(t,x)\beta^\prime\big(u*\tau_k(t,x)\big)\big) \rightarrow \nabla_x \big(\psi(t,x)\beta^\prime\big(u(t,x)\big)\big) $ and 
 $\nabla\phi(u)* \tau_k \rightarrow  \nabla\phi(u)$ in $L^2\big(0,T; L^2(\Omega\times \R^d)\big)$ as $k\rightarrow 0$. Therefore, $  \int_0^T\int_{\R^d} \beta^\prime\big(u*\tau_k(t,\cdot)\big) \Delta(\phi(u(s,\cdot))* \tau_k)\psi(s,x)\,ds\,dx \rightarrow -\int_0^T\int_{\R^d} \nabla_x \Big(\psi(t,x)\beta^\prime\big(u(t,x)\big)\Big).\Big( \nabla\phi(u(s,x)\Big)\,dx\,ds $\\ in $L^1(\Omega)$ as $k \rightarrow 0$. By the same reasoning, 
 
  $$ \int_{\Pi_T} \psi(s,x)\beta^\prime\big(u*\tau_k(s,x)\big)\Delta \big( u * \tau_k(s,x)\big)\,dx\,ds \rightarrow - \int_{\Pi_T}\nabla_x\big( \psi(s,x)\beta^\prime(u(s,x))\big).\nabla \big( u (s,x)\big)\,dx\,ds.$$ in $ L^1(\Omega)$ as $k \rightarrow 0$.

 Also, it may be recalled that $\text{div}_x f(u)\in L^2\big(0,T; L^2(\Omega\times \R^d)\big)$ and $\beta(u*\tau_k)\rightarrow \beta(u)$ in $ L^2\big(0,T; L^2(\Omega\times \R^d)\big)$ as $\kappa \rightarrow 0$. Therefore,
 \begin{align*}
  \int_0^T\int_{\R^d}\beta^\prime\big(u*\tau_k(t,\cdot)\big)\mbox{div}_x (f(u)\con \tau_\kappa)\psi(s,x) \,dx\,ds
  \rightarrow \int_0^T\int_{\R^d}\beta^\prime\big(u\big)\mbox{div}_x (f(u))\psi(s,x) \,dx\,ds 
    \end{align*} as $k\rightarrow 0$ in $L^1(\Omega)$. To this end, we denote 
    
    \begin{align*}
    I_k(s,z)&= \int_{\R^d}\psi(s,x) \Big(\beta(u*\tau_k+ \eta(x, u, z)*\tau_k)-\beta(u*\tau)\Big)\,dx,\\
    I(s,z)&=  \int_{\R^d}\psi(s,x) \Big(\beta(u+ \eta(x, u, z))-\beta(u*\tau)\Big)\,dx.   
        \end{align*}
        It follows from straightforward computation that $\int_0^T \int_E |I_k(s,z)-I(s,z)|^2m(dz)\,ds \rightarrow 0$ as $k\rightarrow 0$. Therefore, we can invoke It\^{o}-L\'{e}vy isometry and pass to the limit $k\rightarrow 0$, in the martingale term in \eqref{eq:mollified-SDE-ito-2}.  This competes the validation of passage to the limit as $k\rightarrow 0$  in every term of \eqref{eq:mollified-SDE-ito-2}. The assertion is now concluded by simply letting $k \rightarrow 0$ in    
        \eqref{eq:mollified-SDE-ito-2} and rearranging the terms.

 \end{proof}


\end{document}